%%%%%%%%%%%%%%%%%%%%%%% file template.tex %%%%%%%%%%%%%%%%%%%%%%%%%
%
% This is a general template file for the LaTeX package SVJour3
% for Springer journals.          Springer Heidelberg 2010/09/16
%
% Copy it to a new file with a new name and use it as the basis
% for your article. Delete % signs as needed.
%
% This template includes a few options for different layouts and
% content for various journals. Please consult a previous issue of
% your journal as needed.
%
%%%%%%%%%%%%%%%%%%%%%%%%%%%%%%%%%%%%%%%%%%%%%%%%%%%%%%%%%%%%%%%%%%%%
%%
%% First comes an example EPS file -- just ignore it and
%% proceed on the \documentclass line
%% your LaTeX will extract the file if required
%%\begin{filecontents*}{example.eps}
%%!PS-Adobe-3.0 EPSF-3.0
%%%BoundingBox: 19 19 221 221
%%%CreationDate: Mon Sep 29 1997
%%%Creator: programmed by hand (JK)
%%%EndComments
%gsave
%newpath
%  20 20 moveto
%  20 220 lineto
%  220 220 lineto
%  220 20 lineto
%closepath
%2 setlinewidth
%gsave
%  .4 setgray fill
%grestore
%stroke
%grestore
%\end{filecontents*}
%
\RequirePackage{fix-cm}
\documentclass[smallextended,numbook,runningheads]{svjour3}
\smartqed  % flush right qed marks, e.g. at end of proof
\usepackage{graphicx}
\usepackage{amsmath,amssymb}
\usepackage{graphicx,graphics,color}
\usepackage{booktabs,subfig} % for tables

\journalname{...}
\date{ \phantom{b} \vspace{45mm}\phantom{e}}
%\date{\vspace{43mm}}
%, file: esep.tex}

% \usepackage{mathptmx}      % use Times fonts if available on your TeX system
%
% insert here the call for the packages your document requires
%\usepackage{latexsym}
% etc.
%
% please place your own definitions here and don't use \def but
% \newcommand{}{}
%
% Insert the name of "your journal" with
% \journalname{myjournal}
%

\usepackage{color}

\def\half{\frac{1}{2}}

\newcommand\bfd{{\mathbf d}}
\newcommand\bfe{{\mathbf e}}
\newcommand\bff{{\mathbf f}}
\newcommand\bfg{{\mathbf g}}

\newcommand\bfu{{\mathbf u}}
\newcommand\bfv{{\mathbf v}}
\newcommand\bfw{{\mathbf w}}
\newcommand\bfx{{\mathbf x}}
\newcommand\bfy{{\mathbf y}}
\newcommand\bfz{{\mathbf z}}
\newcommand\bfA{{\mathbf A}}

\newcommand\bfK{{\mathbf K}}
\newcommand\bfM{{\mathbf M}}

\newcommand\bfzero{{\mathbf 0}}

%+1
\newcommand\andquad{\quad\hbox{ and }\quad}
\newcommand\for{\quad\hbox{ for }\quad}

\renewcommand\d{\hbox{\rm d}}

%

%%%%%%%%%%%%%%%%%%%%%%%%%%%%%%%%%%%%%%%%%%%%%%%%%%%%%%%%%%%%%%%%%%

\newcommand{\Ga}{\Gamma}

\newcommand{\laplace}{\Delta}
\newcommand{\lb}{\laplace_{\surface}}
\newcommand{\nbg}{\nabla_{\Gamma}}
\newcommand{\nbgh}{\nabla_{\Gamma_h}}
\newcommand{\mat}{\partial^{\bullet}}

\DeclareMathOperator{\diff}{\frac{\d}{\d t}}

\newcommand{\inv}{^{-1}}

\newcommand{\nb}{\nabla}

\newcommand{\pa}{\partial}
\newcommand{\R}{\mathbb{R}}

\newcommand{\spn}{\textnormal{span}}

\def \to {\rightarrow}
\newcommand{\vphi}{\varphi}
%\def \nu {\textnormal{n}}

%%%%%%%%%%%%%%%%%%%%%%%%%%%%%%%%%%%%%%%%%%%%%%%%%%%%%%%%%%%%%%%%%%
\usepackage{marginnote}

%%%%%%%%%%%%%%%%%%%%%%%%%%%%%%%%%%%%%%%%%%%%%%%%%%%%%%%%%%%%%%%%%%
%\newcommand\id{\mathds{1}}
\newcommand\id{\mathrm{id}}
\newcommand\surface{\Gamma}
\newcommand\normal{\nu}
% e == exact, n == numerical, r == Ritz
\newcommand\enodes\xs
\newcommand\nnodes\bfx
\newcommand\rnodes\xs

\newcommand\regmass\bfK
\newcommand\mass\bfM
\newcommand\stiff\bfA

\newcommand\dfdt[2]{\frac{\dd #1}{\dd #2}}

\newcommand\dd{\mathrm{d}}
% \DeclareMathOperator\D{D}
% \DeclareMathOperator\dd{d}

%%%%%%%%%%%%%%%%%%%%%%%%%%%%%%%%%%%%%%%%%%%%%%%%%%%%%%%%%%%%%%%%%%

%%%%%%%%%%%%%%%%%%%%%%%%%%%%%%%%%%%%%%%%%%%%%%%%%%%%%%%%%%%%%%%%%%
%   Important definitions .
%%%%%%%%%%%%%%%%%%%%%%%%%%%%%%%%%%%%%%%%%%%%%%%%%%%%%%%%%%%%%%%%%%
% arbitrary approximating surface (not X_h the numerical solution!!)
%\newcommand{\Xs}{\widetilde{X}_h}
 % the velocity of the above
% the interpolation of X (having the nodal values \xs)
\newcommand{\Xs}{X_h^\ast}
\newcommand{\Vs}{v_h^\ast}
% the identity operator on surface

% the discrete velocity of the lifted triangles formerly v_h in Dziuk--Elliott - L^2 paper
\newcommand{\vhat}{\hat v_h}
% bilinear form tensors

% norms
\newcommand{\normM}[1]{\| #1 \|_{\bfM(\xs)}}
\newcommand{\normA}[1]{\| #1 \|_{\bfA(\xs)}}
\newcommand{\normK}[1]{\| #1\|_{\bfK(\xs)}}
% stars (solutions) and their dot versions
\newcommand{\us}{\bfu^\ast}
\newcommand{\vs}{\bfv^\ast}
\newcommand{\xs}{\bfx^\ast}

\newcommand{\dotxs}{\dot\bfx^\ast}
% errors and their dot versions
\newcommand{\eu}{\bfe_\bfu}
\newcommand{\ev}{\bfe_\bfv}
\newcommand{\ex}{\bfe_\bfx}

\newcommand{\dotex}{\dot\bfe_\bfx}
% defects
\newcommand{\du}{\bfd_\bfu}
\newcommand{\dv}{\bfd_\bfv}

% Ritz map definitions

%\def \P {R_h}
%%%%%%%%%%%%%%%%%%%%%%%%%%%%%%%%%%%%%%%%%%%%%%%%%%%%%%%%%%%%%%%%%%

%%%%%%%%%%%%%%%%%%%%%%%%%%%%%%%%%%%%%%%%%%%%%%%%%%%%%%%%%%%%%%%%%%
%%%%     numerics    %%%%%%%%%%%%%%%%%%%%%%%%%%%%%%%%%%%%%%%%%%%%%
%%%%%%%%%%%%%%%%%%%%%%%%%%%%%%%%%%%%%%%%%%%%%%%%%%%%%%%%%%%%%%%%%%

\newcommand\dotarg{\:\cdot\:}

%%%%%%%%%%%%%%%%%%%%%%%%%%%%%%%%%%%%%%%%%%%%%%%%%%%%%%%%%%%%%%%%%%

\newcommand{\QED}{\hfill $\square$}

\newcommand{\bcl}{\color{black}}
\newcommand{\ecl}{\color{black}}

\newcommand{\bbk}{\color{black}}
\newcommand{\ebk}{\color{black}}

\begin{document}

%\title{Convergence of finite elements on a \\  solution-driven evolving surface}
\title{Convergence of finite elements on an evolving surface\\
driven by diffusion on the surface}

\titlerunning{Convergence of finite elements on a solution-driven evolving surface}        % if too long for running head

\author{Bal\'{a}zs~Kov\'{a}cs \and
        Buyang~Li \and
        Christian~Lubich \and
        Christian~A.~Power~Guerra
}

\authorrunning{B.~Kov\'{a}cs, B.~Li, Ch.~Lubich and C.A.~Power Guerra} % if too long for running head

\institute{B. Kov\'{a}cs, Ch. Lubich and C. A. Power Guerra \at
              Mathematisches Institut, Universit\"at T\"{u}bingen,\\
              Auf der Morgenstelle 10, 72076 T\"{u}bingen, Germany \\
              \email{\{kovacs,lubich,power\}@na.uni-tuebingen.de}
           \and
           B. Li \at
              Department of Applied Mathematics, Hong Kong Polytechnic University,\\
              Kowloon, Hong Kong \\
              \email{buyang.li@polyu.edu.hk}
%           \and
%           Ch. Lubich \at
%              Mathematisches Institut, University of T\"{u}bingen,\\
%              Auf der Morgenstelle 10, 72076 T\"{u}bingen, Germany \\
%              \email{lubich@na.uni-tuebingen.de}
%           \and
%           C. A. Power Guerra \at
%              Mathematisches Institut, University of T\"{u}bingen,\\
%              Auf der Morgenstelle 10, 72076 T\"{u}bingen, Germany \\
%              \email{power@na.uni-tuebingen.de}
}

\date{}
% The correct dates will be entered by the editor

\maketitle

\begin{abstract}
 For a parabolic surface partial differential equation coupled to surface evolution, convergence of the spatial semidiscretization is studied in this paper. The velocity of the evolving surface is not given explicitly, but depends on the solution of the parabolic equation on the surface.  Various velocity laws are considered: elliptic regularization of a direct pointwise coupling, a regularized mean curvature flow and a dynamic velocity law. A novel stability and convergence analysis for evolving surface finite elements for the coupled problem of surface diffusion and surface evolution is developed.
 \bcl The stability analysis works with the matrix-vector formulation of the method and does not use geometric arguments. The geometry enters only into the consistency estimates. \ecl
Numerical experiments complement the theoretical results.
  \keywords{diffusion-driven surface \and velocity law \and evolving surface FEM \and stability \and convergence analysis}% \and regularized mean curvature flow}
  \subclass{35R01 \and 65M60 \and 65M15 \and 65M12}
\end{abstract}
\section{Introduction}
Starting from a paper by Dziuk and Elliott \cite{DziukElliott_ESFEM}, much insight into the stability and convergence properties of finite elements on evolving surfaces has been obtained by studying a linear parabolic equation on a given moving closed surface $\Gamma(t)$. The strong formulation of this model problem is to find a solution $u(x,t)$ (for $x\in \Gamma(t)$ and $0 \le t \le T$) with given initial data $u(x,0)=u_0(x)$ to the linear partial differential equation
$$%\begin{equation}\label{par-Gamma-t}
    \mat u(x,t) + u(x,t) \nb_{\Ga(t)} \cdot v(x,t) - \laplace_{\Ga(t)} u(x,t)  = 0, \qquad x\in \Gamma(t),\ 0 < t \le T,
$$%\end{equation}
where  $\mat$ denotes the material time derivative,  $\laplace_{\Ga(t)}$ is the Laplace--Beltrami operator on the surface, and $\nb_{\Ga(t)} \cdot v$ is the tangential divergence of the {\it given} velocity $v$ of the surface. We refer to \cite{DziukElliott_acta} for an excellent review article (up to 2012) on the numerical analysis of this and related problems. Optimal-order $L^2$ error bounds for piecewise linear finite elements are shown in \cite{DziukElliott_L2} and maximum-norm error bounds in \cite{KovacsPower_max}. Stability and convergence of full discretizations obtained by combining the evolving surface finite element method (ESFEM) with various time discretizations are shown in \cite{DziukElliott_fulldiscr,DziukLubichMansour_rksurf,LubichMansourVenkataraman_bdsurf}.  Convergence of semi- and full discretizations using high-order evolving surface finite elements is studied in \cite{highorder}.
Arbitrary Euler--Lagrangian (ALE) variants of the ESFEM method for this equation are studied
in \cite{ElliottStyles_ALEnumerics,ElliottVenkataraman_ALE,KovacsPower_ALEdiscrete}.
Convergence properties of the ESFEM and of full discretizations for quasilinear parabolic equations on prescribed moving surfaces are studied in \cite{KovacsPower_quasilinear}.

Beyond the above model problem, there is considerable interest in cases where the velocity of the evolving surface is \emph{not given explicitly}, but depends on the solution $u$ of the parabolic equation; see, e.g., \cite{BEM,CGG,ElliottStyles_ALEnumerics,Fife} for physical and biological models where such situations arise. Contrary to the case of surfaces with prescribed motion, there exists so far no numerical analysis for solution-driven surfaces in $\R^3$, to the best of our knowledge. 

For the case of evolving {\it curves} in $\R^2$, there are recent preprints by Pozzi \& Stinner \cite{PozziStinner_curve} and Barrett, Deckelnick \& Styles \cite{BDS}, who couple the curve-shortening flow with diffusion on the curve and study the convergence of  finite element discretizations without and with a tangential part in the discrete velocity, respectively.
The analogous problem for two- or higher-dimensional surfaces would be to couple mean curvature flow with diffusion on the surface.
Studying the convergence of finite elements for these coupled problems, however, remains illusive as long as the convergence of ESFEM for mean curvature flow of closed surfaces is not understood. This has remained an open problem since Dziuk's formulation of such a numerical method for mean curvature flow in his 1990 paper \cite{Dziuk90}.

In this paper we consider  different velocity laws for coupling the surface motion with  the  diffusion on the surface. Conceivably the  simplest velocity law would be to prescribe the normal velocity at any surface point as a function of the solution value and possibly its tangential gradient at this point:
$v(x,t) = g(u(x,t), \nabla_{\Ga(t)}u(x,t))\, \nu_{\Ga(t)}(x)$ for $x\in \Ga(t)$, where
$\nu_{\Ga(t)}(x)$ denotes the outer normal vector and $g$ is  a given smooth scalar-valued function. This does, however, not appear to lead to a well-posed problem, and in fact we found no mention of this seemingly obvious choice in the literature. Here we study instead a {\it regularized velocity law}:
$$
v(x,t) - \alpha \laplace_{\Ga(t)} v(x,t)= g\bigl(u(x,t), \nabla_{\Ga(t)}u(x,t)\bigr)\, \nu_{\Ga(t)}(x),
\qquad x\in \Ga(t),
$$
with a fixed regularization parameter $\alpha>0$. This elliptic regularization will turn out to permit us to give a complete stability and convergence analysis of the ESFEM semidiscretization, for finite elements of polynomial degree at least two. The case of linear finite elements is left open in the theory of this paper, but will be considered in our numerical experiments. The stability and convergence results  can be extended to full discretizations with linearly implicit backward difference time-stepping, as we plan to show in later work.

Our approach also applies to the ESFEM discretization of  coupling a {\it regularized mean curvature flow} and diffusion on the surface:
$$
v - \alpha \laplace_{\Ga(t)} v= \Bigl( - H + g\bigl(u, \nabla_{\Ga(t)}u\bigr)\Bigr) \nu_{\Ga(t)},
$$
where $H$ denotes mean curvature on the surface $\Ga(t)$.
%This extension will be considered in the final section of this paper.

The error analysis is further extended to a {\it dynamic velocity law}
$$
 \mat v + v \nb_{\Ga(t)} \cdot v  - \alpha \laplace_{\Ga(t)} v= g\bigl(u, \nabla_{\Ga(t)}u\bigr)\, \nu_{\Ga(t)}.
$$
\bcl A physically more relevant dynamic velocity law would be based on momentum and mass balance, such as
incompressible Navier--Stokes motion of the surface coupled to diffusion on the surface. \ecl
%, where the velocity $v$ is determined together with the density $\rho$ and the pressure $p$ by the equations
%\begin{align*}
%\rho\, \mat v - \alpha \Delta_{\Ga(t)} v =&\ -\nabla_{\Ga(t)} p +g(u, \nabla_{\Ga(t)} u) \, \nu_{\Ga(t)} ,
%\\
%\mat\rho=&\ 0 ,
%\\
%\nabla_{\Ga(t)}\cdot v =&\ 0.
%\end{align*}
We expect that our analysis extends to such a system, but this is beyond the scope of this paper. Surface evolutions under Navier--Stokes equations and under Willmore flow have recently been considered in \cite{BGN01,BGN02,BGN03}.

%\bcl
%Another related class of problems, for which we expect our kind of numerical analysis to be applicable, is that of an evolving closed surface driven by diffusion (and reaction) in the bulk; cf.~\cite{Fife}.
%\ecl
%
The paper is organized as follows.

In Section~\ref{section: problem} we describe the considered problems and give the weak formulation. We recall the basics of the evolving surface finite element method and describe the semidiscrete problem. Its matrix-vector formulation is useful not only for the implementation, but will play a key role in the stability analysis of this paper.

In Section~\ref{section: main result} we present the main result of the paper, which gives convergence estimates for the ESFEM semidiscretization with finite elements of polynomial degree at least $2$. We further outline the main ideas and the organization of the proof.

In Section~\ref{section: preliminaries} we present auxiliary results that are used to relate different surfaces to one another. They are the key technical results used later on in the stability analysis.
Section~\ref{section: uncoupled problem} contains the stability analysis for the regularized velocity law with a prescribed driving term.
In Section~\ref{section: coupled problem} this is extended to the stability analysis for coupling surface PDEs and surface motion. \bcl The stability analysis works with the matrix-vector formulation of the ESFEM semidiscretization and does not use geometric arguments.  \ecl

In Section~\ref{section: geometric estimates} we briefly recall some geometric estimates used for estimating the consistency errors, which are the defects obtained on inserting the interpolated exact solution into the scheme.
Section~\ref{section: defect bounds} deals with the defect estimates.
Section~\ref{section: proof completed} proves the main result by combining the results of the previous sections.

In Section~\ref{section: extensions} we give extensions to other velocity laws: the regularized mean curvature flow and the dynamic velocity law addressed above.

Section~\ref{section: numerics} presents numerical experiments that are complementary to our theoretical results in that they show the numerical behaviour of piecewise linear finite elements on some examples.

We use the notational convention to denote vectors in $\R^3$ by italic letters, but to denote finite element nodal vectors in $\R^N$ and $\R^{3N}$ by boldface lowercase letters and finite element mass and stiffness matrices by boldface capitals. All boldface symbols in this paper will thus be related to the matrix-vector formulation of the ESFEM.

\section{Problem formulation and evolving surface finite element semi\-discret\-ization}
\label{section: problem}

\subsection{Basic notions and notation}
\label{subsection: basic notions}
We consider the evolving two-dimensional closed surface $\Gamma(t)\subset\R^3$ as the image
$$
\Ga(t) = \{ X(p,t) \,:\, p \in \Ga^0 \}
$$
of a sufficiently regular vector-valued function $X:\Ga^0\times [0,T]\to \R^3$, where $\Ga^0$ is the smooth closed initial surface, and $X(p,0)=p$.
%We require that $X$ is Lipschitz continuous and continuously differentiable with respect to $t$.
In view of the subsequent numerical discretization, it is convenient to think of $X(p,t)$ as the position at time $t$ of a moving particle with label $p$, and of $\Ga(t) $ as a collection of such particles.
To indicate the dependence of the surface on $X$, we will write
$$
\Ga(t) = \Ga(X(\cdot,t)), \quad\hbox{ or briefly}\quad \Ga(X)
$$
when the time $t$ is clear from the context. The {\it velocity} $v(x,t)\in\R^3$ at a point $x=X(p,t)\in\Gamma(t)$  equals
\begin{equation} \label{velocity}
 \partial_t X(p,t)= v(X(p,t),t).
\end{equation}
Note that for a known velocity field \bcl $v:\R^3\times [0,T] \to \R^3$, \ecl the position $X(p,t)$ at time $t$ of the particle with label $p$
is obtained by solving the ordinary differential equation \eqref{velocity} from $0$ to $t$ for a fixed $p$.

For a function $u(x,t)$ ($x\in \Gamma(t)$, $0\le t \le T$) we denote the {\it material derivative} as
$$
\mat u(x,t) = \frac \d{\d t} \,u(X(p,t),t) \quad\hbox{ for } \ x=X(p,t).
$$
At $x\in\Gamma(t)$ and $0\le t \le T$,
we denote by $\nu_{\Ga(X)}(x,t)$ the outer normal, by $\nabla_{\Ga(X)}u(x,t)$ the tangential gradient of $u$, by $\laplace_{\Ga(X)} u(x,t)$ the Laplace--Beltrami operator applied to $u$, and by $\nb_{\Ga(X)} \cdot v(x,t)$ the tangential divergence of $v$; see, e.g., \cite{DziukElliott_acta} for these notions.

\subsection{Surface motion coupled to a surface PDE: strong and weak formulation}
As outlined  in the introduction, we consider a parabolic equation on an evolving surface that moves according to an elliptically regularized velocity law:
\begin{equation}
\label{eq: coupled problem}
    %\left\{
    \begin{aligned}
        \mat u + u \nb_{\Ga(X)} \cdot v - \laplace_{\Ga(X)} u   =&\  f(u, \nabla_{\Ga(X)} u), \\
        v - \alpha \laplace_{\Ga(X)} v =&\ g(u, \nb_{\Ga(X)} u ) \nu_{\Ga(X)}.
        %\\
        %\pa_t X(p,t) =&\ v(X(p,t),t).
    \end{aligned}
   % \right.
\end{equation}
Here, $f:\R\times\R^3\to \R$ and $g:\R\times\R^3\to\R$ are given continuously differentiable functions,
% that are assumed bounded and Lipschitz continuous in this paper,
and $\alpha>0$ is a fixed parameter. This system is considered together with the collection of ordinary differential equations \eqref{velocity} for every label $p$. Initial values are specified for $u$ and $X$.

On applying the Leibniz formula as in \cite{DziukElliott_ESFEM}, the weak formulation reads as follows:
\bcl Find  $u(\cdot,t)\in  W^{1,\infty}(\Ga(X(\cdot,t)))$ and $v(\cdot,t) \in  W^{1,\infty}(\Ga(X(\cdot,t)))^3 $  \ecl such that for all test functions $\vphi(\cdot,t)  \in H^1(\Ga(X(\cdot,t) ))$ with $\mat \vphi = 0$ and  $\psi(\cdot,t)  \in H^1(\Ga(X(\cdot,t) ))^3$,
\begin{equation}
\label{eq: coupled problem - weak form}
 %   \left\{
    \begin{aligned}
        \diff \int_{\Ga(X)} \!\! u \vphi + \int_{\Ga(X)} \!\! \nabla_{\Gamma(X)} u \cdot \nabla_{\Gamma(X)} \vphi =& \int_{\Ga(X)} f(u, \nabla_{\Gamma(X)}u) \vphi,
         \\[2mm] %\int_{\Ga(a)} u \mat v, \\
        \int_{\Ga(X)} v \cdot \psi + \alpha \int_{\Ga(X)} \!\! \nabla_{\Gamma(X)} v \cdot \nabla_{\Gamma(X)} \psi =&\ \int_{\Ga(X)}  g(u,\nabla_{\Gamma(X)} u) \,\nu_{\Ga(X)} \cdot \psi ,
%        \\
%        \pa_t X(p,t) =&\ v(X(p,t),t) .
    \end{aligned}
 %   \right.
\end{equation}
alongside with the ordinary differential equations \eqref{velocity} for the positions $X$ determining the surface $\Gamma(X)$. 

\bcl We assume throughout this paper that the problem \eqref{eq: coupled problem} or \eqref{eq: coupled problem - weak form} admits a  unique solution with sufficiently high Sobolev regularity on the time interval $[0,T]$ for the given initial data $u(\cdot,0)$ and $X(\cdot,0)$. We assume further that the flow map $X(\cdot,t):\Gamma_0\to \Gamma(t)\subset\R^3$ is non-degenerate for $0\le t \le T$, so that $\Gamma(t)$ is a regular surface.\ecl

\subsection{Evolving surface finite elements}
\label{section:ESFEM}
We describe the surface finite element discretization of our problem, following \cite{Dziuk88} and \cite{Demlow2009}. We use simplicial elements and continuous piecewise polynomial basis functions of degree $k$, as defined in  \cite[Section 2.5]{Demlow2009}.

We triangulate the given smooth surface $\Gamma^0$ by an admissible family of triangulations $\mathcal{T}_h$ of decreasing maximal element diameter $h$; see \cite{DziukElliott_ESFEM} for the notion of an admissible triangulation, which includes quasi-uniformity and shape regularity. For a momentarily fixed $h$, we denote by $\bfx^0=(x_1^0,\dots,x_N^0)$ the vector in $\R^{3N}$ that collects all $N$ nodes of the triangulation. By piecewise polynomial interpolation of degree $k$, the nodal vector defines an approximate surface $\Gamma_h^0$ that interpolates $\Gamma^0$ in the nodes $x_j^0$. We will evolve the $j$th node in time, denoted $x_j(t)$ with $x_j(0)=x_j^0$, and collect the nodes at time $t$ in a vector
$$
\bfx(t) = (x_1(t),\dots,x_N(t)) \in \R^{3N}.
$$
Provided that $x_j(t)$ is sufficiently close to the exact position $x_j^*(t):=X(p_j,t)$ (with $p_j=x_j^0$) on the exact surface $\Gamma(t)=\Gamma(X(\cdot,t))$, the nodal vector $\bfx(t)$ still corresponds to an admissible triangulation. In the following discussion we omit the omnipresent argument $t$ and just write $\bfx$ for $\bfx(t)$ when the dependence on $t$ is not important.

By piecewise polynomial interpolation on the  plane reference triangle that corresponds to every
 curved triangle of the triangulation, the nodal vector $\bfx$ defines a closed surface denoted by $\Gamma_h(\bfx)$. We can then define finite element {\it basis functions}
$$
\phi_j[\bfx]:\Gamma_h(\bfx)\to\R, \qquad j=1,\dotsc,N,
$$
which have the property that on every triangle their pullback to the reference triangle is polynomial of degree $k$, and which satisfy
\begin{equation*}
    \phi_j[\bfx](x_k) = \delta_{jk} \quad  \text{ for all } j,k = 1,  \dotsc, N .
\end{equation*}
These functions span the finite element space on $\Gamma_h(\bfx)$,
\begin{equation*}
    S_h(\bfx) = \spn\big\{ \phi_1[\bfx], \phi_2[\bfx], \dotsc, \phi_N[\bfx] \big\} .
\end{equation*}
For a finite element function $u_h\in S_h(\bfx)$ the tangential gradient $\nabla_{\Gamma_h(\bfx)}u_h$ is defined piecewise.

We set
$$
X_h(p_h,t) = \sum_{j=1}^N x_j(t) \, \phi_j[\bfx(0)](p_h), \qquad p_h \in \Gamma_h^0,
$$
which has the properties that $X_h(p_j,t)=x_j(t)$ for $j=1,\dots,N$, that $X_h(p_h,0)=p_h$ for all $p_h\in\Gamma_h^0$, and
$$
\Gamma_h(\bfx(t))=\Gamma(X_h(\cdot,t)).
$$
The {\it discrete velocity} $v_h(x,t)\in\R^3$ at a point $x=X_h(p_h,t)\in \Gamma(X_h(\cdot,t))$ is given by
$$
\partial_t X_h(p_h,t) = v_h(X_h(p_h,t),t).
$$
A key property of the basis functions is the {\it transport property} \cite{DziukElliott_ESFEM}:
$$
\frac\d{\d t} \Bigl( \phi_j[\bfx(t)](X_h(p_h,t)) \Bigr) =0,
$$
which by integration from $0$ to $t$ yields
$$
\phi_j[\bfx(t)](X_h(p_h,t)) = \phi_j[\bfx(0)](p_h).
$$
This implies that the discrete velocity is simply
$$
v_h(x,t) = \sum_{j=1}^N v_j(t) \, \phi_j[\bfx(t)](x) \quad\hbox{ for } x \in \Gamma_h\bigl(\bfx(t)\bigr),
\qquad \hbox{with } \ v_j(t)=\dot x_j(t),
$$
where the dot denotes the time derivative $\d/\d t$.

The {\it discrete material derivative} of a finite element function
$$
u_h(x,t) = \sum_{j=1}^N u_j(t) \, \phi_j[\bfx(t)](x), \qquad x \in \Gamma_h\bigl(\bfx(t)\bigr),
$$
is defined as
$$
\mat_h u_h(x,t) = \frac\d{\d t} u_h(X_h(p_h,t),t) \qquad\hbox{for }\ x=X_h(p_h,t).
$$
By the transport property of the basis functions, this is just
$$
\mat_h u_h(x,t) = \sum_{j=1}^N \dot u_j(t) \, \phi_j[\bfx(t)](x), \qquad x \in \Gamma_h\bigl(\bfx(t)\bigr).
$$

\subsection{Semidiscretization of the evolving surface problem}
\label{subsection:semidiscretization}

The finite element spatial semidiscretization of the problem \eqref{eq: coupled problem - weak form} reads as follows: Find the unknown nodal vector $\bfx(t)\in \R^{3N}$ and the unknown finite element functions $u_h(\cdot,t)\in S_h(\bfx(t))$ and $v_h(\cdot,t)\in S_h(\bfx(t))^3$  such that, for all $\vphi_h(\cdot,t) \in S_h(\bfx(t))$ with $\mat_{h} \vphi_h=0$ and all $\psi_h(\cdot,t)\in S_h(\bfx(t))^3$,
\begin{equation}\label{uh-vh-equation}
  %  \left\{
    \begin{aligned}
        \diff \int_{\Ga_h(\bfx)} \!\!\! u_h \vphi_h + \int_{\Ga_h(\bfx)}\!\!\!\!\! \nabla_{\Ga_h(\bfx)} u_h \cdot \nabla_{\Ga_h(\bfx)}  \vphi_h
        =& \ \int_{\Ga_h(\bfx)} \!\!\! f(u_h,\nabla_{\Ga_h(\bfx)}  u_h)\,\varphi_h, \\
        \int_{\Ga_h(\bfx)}\!\!\! v_h \cdot \psi_h + \alpha \int_{\Ga_h(\bfx)} \!\!\!\!\! \nabla_{\Ga_h(\bfx)}  v_h \cdot \nabla_{\Ga_h(\bfx)}  \psi_h =&\ \int_{\Ga_h(\bfx)} \!\!\! g(u_h,\nabla_{\Ga_h(\bfx)}  u_h) \,\nu_{\Ga_h(\bfx)} \cdot \psi_h,
%        \\
%        \pa_t X_h(\cdot,t) =&\ v_h\big( X_h(\cdot,t),t \big) ,
    \end{aligned}
%    \right.
\end{equation}
and
\begin{equation}\label{xh}
\partial_t X_h(p_h,t) = v_h(X_h(p_h,t),t), \qquad p_h\in\Ga_h^0.
\end{equation}
The initial values for the nodal vector $\bfu$ corresponding to $u_h$ and the nodal vector $\bfx$ of the initial positions are taken as the exact initial values at the nodes $x_j^0$ of the triangulation of the given initial surface $\Gamma^0$:
$$
x_j(0) = x_j^0, \quad u_j(0)=u(x_j^0,0) , \qquad (j=1,\dotsc,N).
$$

\subsection{Differential-algebraic equations of the matrix-vector formulation}\label{subsection:DAE}
We now show that the nodal vectors $\bfu\in\R^N$ and $\bfv\in\R^{3N}$ of the finite element functions $u_h$ and $v_h$, respectively,  together with the surface nodal vector $\bfx\in\R^{3N}$ satisfy a system of differential-algebraic equations (DAEs).
Using the above finite element setting,  we set (omitting the argument $t$)
\begin{align*}
    u_h =&\ \sum_{j=1}^N u_j \phi_j[\bfx], \qquad u_h(x_j)=u_j \in \R , \\
    v_h =&\ \sum_{j=1}^N v_j \phi_j[\bfx], \qquad v_h(x_j)=v_j \in \R^3 ,
\end{align*}
and collect the nodal values in column vectors $\bfu=(u_j)\in\R^N$ and $\bfv=(v_j) \in \R^{3N}$.

We define the surface-dependent mass matrix $\bfM(\bfx)$ and stiffness matrix $\bfA(\bfx)$ on the surface determined by the nodal vector $\bfx$:
\begin{equation*}
    \begin{aligned}
        \bfM(\bfx)\vert_{jk} =&\ \int_{\Ga_h(\bfx)} \! \phi_j[\bfx] \phi_k[\bfx] , \\
        \bfA(\bfx)\vert_{jk} =&\ \int_{\Ga_h(\bfx)} \! \nb_{\Ga_h} \phi_j[\bfx] \cdot \nb_{\Ga_h} \phi_k[\bfx] ,
    \end{aligned}
    \qquad (j,k = 1,  \dotsc, N) .
\end{equation*}
We further let \bbk (with the identity matrix $I_3 \in \R^{3 \times 3}$) \ebk
\begin{equation}
\label{eq: K matrix def}
    \bfK(\bfx)= I_3 \otimes \Bigl(\bfM(\bfx) + \alpha \bfA(\bfx)\Bigr).  %\qquad (\alpha > 0).
\end{equation}
The right-hand side vectors  $\bff(\bfx,\bfu)\in\R^N$ and $\bfg(\bfx,\bfu)\in\R^{3N}$ are given by
\begin{equation*}
 \begin{aligned}
    \bff(\bfx,\bfu)\vert_j &= \int_{\Ga_h(\bfx)} f(u_h,\nbgh u_h) \, \phi_j[\bfx],
    \\
    \bfg(\bfx,\bfu)\vert_{3(j-1)+\ell} &= \int_{\Ga_h(\bfx)} g(u_h,\nbgh u_h) \,\bigl(\nu_{\Ga_h(\bfx)}\bigr)_\ell \, \phi_j[\bfx],
    \end{aligned}
    %\qquad (j = 1,  \dotsc, N,\ \ell=1,2,3) .
\end{equation*}
for $j = 1,  \dotsc, N,$ and $\ell=1,2,3$.

%The vector valued counterparts are defined similarly, and for simplicity, and since it does not cause confusion, we denote them in the same way.
We then obtain from \eqref{uh-vh-equation}--\eqref{xh} the following coupled DAE system for the nodal values $\bfu, \bfv$ and $\bfx$:
\begin{equation}
\label{eq: DAE form - coupled}
  %  \left\{
    \begin{aligned}
        \diff \Big(\bfM(\bfx)\bfu\Big) + \bfA(\bfx)\bfu =&\   \bff(\bfx,\bfu), \\
        \bfK(\bfx) \bfv =&\ \bfg(\bfx,\bfu),\\
        \dot \bfx =&\ \bfv.
    \end{aligned}
 %   \right.
\end{equation}
With the auxiliary vector $\bfw=\bfM(\bfx)\bfu$, this system becomes
\begin{equation*}
%\label{eq: DAE form - coupled-2}
  %  \left\{
    \begin{aligned}
     \dot \bfx =&\ \bfv, \\
  \dot \bfw  =&\  - \bfA(\bfx)\bfu +\bff(\bfx,\bfu), \\
     \bfzero = &\  -\bfK(\bfx) \bfv + \bfg(\bfx,\bfu),\\
     \bfzero = &\ - \bfM(\bfx)\bfu + \bfw.
    \end{aligned}
 %   \right.
\end{equation*}
This is of a form to which standard DAE time discretization can be applied; see, e.g., \cite[Chap.\,VI]{HairerWannerII}.

\bcl
As will be seen in later sections, the matrix-vector formulation is very useful in the stability analysis of the ESFEM, beyond its obvious role for practical computations.
\ecl

\subsection{Lifts} \label{section:lifts}
In the error analysis we need to compare functions on three different surfaces: the {\it exact surface} $\Gamma(t)=\Gamma(X(\cdot,t))$, the {\it discrete surface} $\Gamma_h(t)=\Gamma_h(\bfx(t))$, and the {\it interpolated surface} $\Gamma_h^*(t)=\Gamma_h(\bfx^*(t))$, where $\bfx^*(t)$ is the nodal vector collecting the grid points $x_j^*(t)=X(p_j,t)$ on the exact surface. In the following definitions we omit the argument $t$ in the notation.

A finite element function $w_h:\Gamma_h\to\R^m$ ($m=1$ or 3) on the discrete surface, with nodal values $w_j$, is related to the finite element function $\widehat w_h$ on the interpolated surface that has the same nodal values:
$$
\widehat w_h  = \sum_{j=1}^N w_j \phi_j[\xs].
$$
 The transition between the interpolated surface and the exact surface is  done by the
 \emph{lift operator}, which was introduced \bbk for linear surface approximations \ebk in \cite{Dziuk88}; \bbk see also \cite{DziukElliott_ESFEM,DziukElliott_L2}.
 Higher-order generalizations have been studied in \cite{Demlow2009}. \ebk The lift operator $l$ maps a function on the interpolated surface $\Gamma_h^*$ to a function on the exact surface $\Gamma$, provided that $\Gamma_h^*$ is sufficiently close to $\Gamma$.

\bbk
The exact regular surface $\Ga(X(\cdot,t))$ can be represented by a (sufficiently smooth) signed distance function $d : \R^3 \times [0,T] \to \R$, cf.~\cite[Section~2.1]{DziukElliott_ESFEM}, such that
\begin{equation}
\label{eq: distance function}
	\Ga(X(\cdot,t)) = \big\{ x\in \R^3 \mid d(x,t) = 0 \big\} \subset \R^3 .
\end{equation}

Using this distance function, \ebk the lift of a continuous function $\eta_h \colon \Ga_h^* \to \R$ is defined as
\begin{equation*}
    \eta_{h}^{l}(y) := \eta_h(x), \qquad x\in\Ga_h^*,
\end{equation*}
where for every $x\in \Ga_h^*$ the point $y=y(x)\in\Ga$ is uniquely defined via
\begin{equation*}
\label{eq: lift defining equation}
    y = x - \nu(y) d(x).
\end{equation*}
For functions taking values in $\R^3$ the lift is componentwise.
By $\eta^{-l}$ we denote the function on $\Ga_h^*$ whose lift is $\eta$.

We denote the composed lift $L$ from finite element functions on $\Gamma_h$ to functions on $\Gamma$ via $\Gamma_h^*$ by
$$
w_h^L = (\widehat w_h)^l.
$$

%
%In particular we will often use the space of lifted basis functions $S_h^l(\bfx) = \{\vphi_h^l \ | \ \vphi_h \in S_h(\bfx)\}$.
%
%Details on the lifts will be given later in Section~\ref{section: lift}.

%
\section{Statement of the main result: semidiscrete error bound}
\label{section: main result}

%\subsection{Error bound}

We are now in the position to formulate the main result of this paper, which yields optimal-order error bounds for the finite element semidiscretization of  a surface PDE on a solution-driven surface as specified in \eqref{eq: coupled problem}, for finite elements of polynomial degree $k\ge 2$. We denote by $\Gamma(t)=\Gamma(X(\cdot,t))$ the exact surface and by $\Gamma_h(t)=\Gamma(X_h(\cdot,t))=\Gamma_h(\bfx(t))$ the discrete surface at time $t$. We introduce the notation
$$
x_h^L(x,t) =  X_h^L(p,t) \in \Gamma_h(t) \qquad\hbox{for}\quad x=X(p,t)\in\Gamma(t).
$$

\begin{theorem}
\label{theorem: coupled error estimate}
    Consider the space discretization \eqref{uh-vh-equation}--\eqref{xh} of the coupled problem  \eqref{velocity}--\eqref{eq: coupled problem}, using evolving surface finite elements of polynomial degree~$k\ge 2$.
    We assume quasi-uniform admissible triangulations of the initial surface and initial values chosen by finite element interpolation of the initial data for $u$.
    Suppose that the problem admits an exact solution $(u, v,X)$ that is sufficiently smooth (say, in the Sobolev class $H^{k+1}$) on the time interval $0\le t \le T$, \bcl and that the flow map $X(\cdot,t):\Gamma_0\to \Gamma(t)\subset\R^3$ is non-degenerate for $0\le t \le T$, so that $\Gamma(t)$ is a regular surface. \ecl

    Then, there exists $h_0 >0$ such that for all mesh widths $h \leq h_0$ the following error bounds hold over the exact surface $\Ga(t)=\Ga(X(\cdot,t))$
    % hold for  finite elements of at least second order $(k\geq 2)$, 
     for $0\le t \le T$:
    \begin{equation*}
   %     \begin{aligned}
            \bigg(\|u_h^L(\cdot,t) - u(\cdot,t)\|_{L^2(\Ga(t))}^2 + \int_0^t \|u_h^L(\cdot,s) - u(\cdot,s)\|_{H^1(\Ga(s))}^2 \,\d s \\
            \bigg)^\half \leq C h^k
  %      \end{aligned}
    \end{equation*}
    and
    \begin{equation*}
     \begin{aligned}
  \bcl  \biggl(  \int_0^t   \|v_h^{L}(\cdot,s) - v(\cdot,s)\|_{H^1(\Ga(s))^3} ^2\,\d s\biggr)^{1/2} \ecl&\leq Ch^k, \\
       \|x_h^L(\cdot,t) - \mathrm{id}_{\Gamma(t)}\|_{H^1(\Ga(t))^3} &\leq Ch^k.
     \end{aligned}
    \end{equation*}
    The constant $C$ is independent of $t$ and $h$, but depends on bounds of the $H^{k+1}$ norms of the solution $(u,v,X)$, on local Lipschitz constants of $f$ and $g$,
on the regularization parameter $\alpha>0$ and on the length $T$ of the time interval.
\end{theorem}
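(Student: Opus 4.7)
The plan is the classical stability-plus-consistency paradigm, executed entirely on the matrix-vector formulation \eqref{eq: DAE form - coupled} as announced in the introduction. Let $\xs(t)=(X(p_j,t))_{j=1}^N$ be the nodal vector tracking the exact flow, and set $\us(t)=(u(x_j^\ast(t),t))_j$ and $\vs(t)=(v(x_j^\ast(t),t))_j$. Substituting $(\us,\vs,\xs)$ into \eqref{eq: DAE form - coupled} produces consistency defects $\du(t)$ and $\dv(t)$ satisfying
\begin{align*}
\diff\bigl(\bfM(\xs)\us\bigr)+\bfA(\xs)\us &= \bff(\xs,\us)+\bfM(\xs)\du, \\
\bfK(\xs)\vs &= \bfg(\xs,\us)+\bfK(\xs)\dv, \\
\dotxs &= \vs,
\end{align*}
where the third identity is exact because $\xs$ and $\vs$ come from the same flow map \eqref{velocity}.

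Subtracting these identities from \eqref{eq: DAE form - coupled} yields error equations for the nodal errors $\eu=\bfu-\us$, $\ev=\bfv-\vs$, $\ex=\bfx-\xs$. The technical workhorse, supplied by Section~\ref{section: preliminaries}, is that for nodal vectors $\bfx$ inside a small neighbourhood of $\xs$ (measured in $\|\cdot\|_{\bfK(\xs)}$) the matrices $\bfM(\bfx),\bfA(\bfx),\bfK(\bfx)$ are uniformly equivalent to their values at $\xs$ and obey Lipschitz-type bounds such as
\[
\bigl|\bfw^{\top}\bigl(\bfA(\bfx)-\bfA(\xs)\bigr)\bfw\bigr|\leq C\,\|\ex\|_{\bfK(\xs)}\,\|\bfw\|_{\bfA(\xs)}^{2},
\]
with analogous estimates for $\bfM$, $\bfK$ and for the nonlinear terms $\bff$, $\bfg$. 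Armed with these, I would test the $\bfu$-error equation with $\eu$ to exploit the dissipation from $\bfA(\xs)$, use the coercivity of $\bfK(\xs)$ (which rests on the regularization $\alpha>0$) to solve algebraically for $\ev$ from the second equation, and then combine with $\dotex=\ev$ to produce a coupled energy identity for $\|\eu\|_{\bfM(\xs)}^{2}+\|\ex\|_{\bfK(\xs)}^{2}$ with dissipation $\|\eu\|_{\bfA(\xs)}^{2}+\|\ev\|_{\bfK(\xs)}^{2}$. All $\bfx$-dependent perturbations then appear quadratically and are absorbed into Gronwall-admissible terms plus the defects.

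The main obstacle is the fully three-way coupling: the matrices and right-hand sides all depend on $\bfx$, which evolves according to $\ev$, which is determined by $\eu$ and $\ex$, which in turn feeds back into the $\bfu$-equation. To close this loop, a bootstrap is needed: I would assume a priori that $\|\ex(t)\|_{\bfK(\xs)}$ remains small enough to sustain the matrix-equivalence hypothesis on a maximal subinterval $[0,t^{\ast}]\subseteq[0,T]$, derive the sharper $\mathcal{O}(h^{k})$ bound from the energy estimate, and hence conclude $t^{\ast}=T$. Transferring $\|\ex\|_{\bfK(\xs)}$ into the $W^{1,\infty}$-smallness required to compare the surfaces $\Gamma_{h}(\bfx)$ and $\Gamma_{h}(\xs)$ uses an inverse inequality, and this is precisely where the hypothesis $k\geq 2$ enters: for $k=1$ the inverse inequality does not leave enough powers of $h$ for the bootstrap to close.

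On the consistency side, I expect the defect bounds $\|\du(t)\|_{\bfM(\xs)^{-1}}+\|\dv(t)\|_{\bfK(\xs)^{-1}}\leq Ch^{k}$ (Section~\ref{section: defect bounds}) to follow by splitting each defect into a Galerkin-type interpolation error on the interpolated surface $\Gamma_{h}(\xs)$ and a geometric error coming from the gap between $\Gamma(t)$ and $\Gamma_{h}(\xs(t))$; both contribute at order $h^{k}$ via the higher-order lift estimates of Section~\ref{section: geometric estimates} together with standard $H^{k+1}$-interpolation bounds. Gronwall's lemma applied to the energy identity then gives the asserted nodal error bounds, and the norm equivalences between the weighted discrete norms $\|\cdot\|_{\bfM(\xs)}$, $\|\cdot\|_{\bfA(\xs)}$ and the $L^{2}$-, $H^{1}$-norms on $\Gamma(t)$ — valid up to $\mathcal{O}(h^{k})$ geometric perturbations — transfer these to the $L^{2}$- and $H^{1}$-bounds for $u_{h}^{L}-u$, $v_{h}^{L}-v$ and $x_{h}^{L}-\mathrm{id}_{\Gamma(t)}$ claimed in Theorem~\ref{theorem: coupled error estimate}.
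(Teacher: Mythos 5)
Your proposal matches the paper's proof in all essential respects: the error decomposition into lifted nodal error plus interpolation error, the matrix-vector stability analysis via energy estimates tested with $\eu$ and $\ev$, the algebraic solve for $\ev$ from the coercive $\bfK(\xs)$-system, the bootstrap on a maximal interval $[0,t^\ast]$ closed by an inverse inequality (exactly where $k\ge 2$ is consumed), and the consistency bounds measured in the dual $H_h^{-1}$-type norm and split into interpolation-error and geometric-error contributions. One small imprecision worth flagging: your sample Lipschitz bound $\bigl|\bfw^{\top}(\bfA(\bfx)-\bfA(\xs))\bfw\bigr|\le C\|\ex\|_{\bfK(\xs)}\|\bfw\|_{\bfA(\xs)}^{2}$ does not hold as written; the correct factor is $\|\nabla_{\Gamma_h(\xs)}e_x\|_{L^\infty}$, and converting that into $\|\ex\|_{\bfK(\xs)}$ costs an $h^{-1}$ via the inverse inequality — which is precisely why the paper's bootstrap hypothesis (its inequality \eqref{eq: assumed bounds - 2}) is stated directly in the $W^{1,\infty}$ norm rather than in the $\bfK$-norm. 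Since you acknowledge the inverse inequality and its role in requiring $k\ge2$ a few lines later, this reads as loose notation rather than a conceptual gap, but the estimate as displayed would not survive the absorption step if taken literally.
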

%\redon [kb: We might elaborate on why the surface $\Ga_h$ is sufficiently good to lift?!
%(The initial surface approximation is known to be good, hence we can really lift $X_h$, then the last estimates provides that $\Ga_h$ is close enough for a lift for $u_h$ and $v_h$.)]\redoff

We note that the last error bound is equivalent to
$$
 \|X_h^L(\cdot,t) - X(\cdot,t)\|_{H^1(\Ga^0)^3} \leq ch^k.
$$
\bcl Moreover, in the case of a coupling function $g$ in \eqref{eq: coupled problem} that is independent of the solution gradient, so that $g=g(u)$, we obtain an error bound for the velocity that is pointwise in time: uniformly for $0\le t \le T$,
$$
 \|v_h^{L}(\cdot,t) - v(\cdot,t)\|_{H^1(\Ga(t))^3} \le Ch^k.
 $$
 \ecl

A key issue in the proof is to ensure that the $W^{1,\infty}$ norm of the position error of the curves remains small. The $H^1$ error bound and an inverse estimate yield an $O(h^{k-1})$ error bound in the $W^{1,\infty}$ norm. This is small only for $k\ge 2$, which is why we impose the condition $k\ge 2$ in the above result.
%It is conceivable that this restriction can be avoided by using more elaborate error bounds, but such an extension is beyond the scope of this paper.

\bcl Since the exact flow map $X(\cdot,t):\Gamma_0\to\Gamma(t)$ is assumed to be smooth and non-degenerate, it is locally close to an invertible linear  transformation, and (using compactness) it therefore preserves the admissibility of grids with sufficiently small mesh width $h\le h_0$. \ecl
Our assumptions therefore guarantee that the triangulations formed by the nodes $x_j^*(t)=X(p_j,t)$ remain admissible uniformly for $t\in[0,T]$ \bcl for sufficiently small $h$ (though the bounds in the admissibility inequalities and the largest possible mesh width may deteriorate with growing time). \ecl Since $k\ge 2$, the position error estimate implies that for sufficiently small $h$ also the triangulations formed by the numerical nodes $x_j(t)$ remain admissible uniformly for $t\in[0,T]$. This cannot be concluded for $k=1$.

The error bound will be proven by clearly separating the issues of consistency and stability.
The
consistency error is the  defect on inserting a projection (interpolation or Ritz projection) of the exact solution into the discretized equation.
The defect bounds involve {\it geometric estimates} that were obtained for \bbk the time dependent case and for \ebk higher order $k\ge 2$ in \cite{highorder}, by combining techniques of Dziuk \& Elliott \cite{DziukElliott_ESFEM,DziukElliott_L2} and Demlow \cite{Demlow2009}. This is done with the  ESFEM formulation of Section~\ref{subsection:semidiscretization}.

The main issue in the proof of Theorem~\ref{theorem: coupled error estimate} is to prove {\it stability} in the form of an $h$-independent bound of the error in terms of the defect.  The stability analysis is  done in the matrix-vector formulation of Section~\ref{subsection:DAE}.  It uses energy estimates and transport formulae that relate the mass and stiffness matrices and the coupling terms for different  nodal vectors $\bfx$.
 {\it No geometric estimates} enter in the proof of stability. 

 In Section~\ref{section: preliminaries} we prove important auxiliary results for the stability analysis. The stability is first analysed for the discretized velocity law without coupling to the surface PDE in Section~\ref{section: uncoupled problem} and is then extended to the coupled problem in Section~\ref{section: coupled problem}. The necessary geometric estimates for the consistency analysis are collected in Section~\ref{section: geometric estimates}, and the defects are then bounded in Section~\ref{section: defect bounds}. The proof of Theorem~\ref{theorem: coupled error estimate} is then completed in Section~\ref{section: proof completed} by putting together the results on stability, defect bounds and interpolation error bounds.

%\subsection{Numerical experiments}
%
%...

\section{Auxiliary results for the stability analysis: relating different surfaces}
\label{section: preliminaries}

The  finite element matrices of Section~\ref{subsection:DAE} induce discrete versions of Sobolev norms. For any $\bfw=(w_j) \in \R^N$ with corresponding finite element function $w_h= \sum_{j=1}^N w_j \phi_j[\bfx] \in S_h(\bfx)$ we note
\begin{align} \label{M-L2}
   &  \|\bfw\|_{\bfM(\bfx)}^{2} := \bfw^T \bfM(\bfx) \bfw = \|w_h\|_{L^2(\Ga_h(\bfx))}^2, \\
   \label{A-H1}
   &  \|\bfw\|_{\bfA(\bfx)}^{2} := \bfw^T \bfA(\bfx) \bfw = \|\nb_{\Ga_h(\bfx)} w_h\|_{L^2(\Ga_h(\bfx))}^2.
   %\\
 %  &  \|\bfw\|_{\bfK(\bfx)}^{2} := \bfw^T \bfK(\bfx) \bfw = \|w_h\|_{H^1(\Ga_h(\bfx))}^2 .
\end{align}
In our stability analysis we need to relate  finite element matrices corresponding to different nodal vectors.
%
% For nodes on the exact smooth surface $\Gamma(t)$ the following result is known.
%
% \begin{lemma}[\cite{DziukLubichMansour_rksurf} Lemma 4.1]% and \cite{LubichMansourVenkataraman_bdsurf} Lemma 2.2]
% \label{lemma: discrete sobolev norm estimates}
% Let $\Gamma(t)=\Gamma(X(\cdot,t))$, $t \in [0,T]$, be a smoothly evolving family of smooth closed surfaces, and
%     let the vector $\xs(t) \in \R^{3N}$ collect the nodes $x_j^*(t)=X(p_j,t)$ of a triangulation.  There are constants $c_M$ and $c_A$ (independent of $h$) such that
%     \begin{align*}
%         \bfw^T \Big( \bfM(\xs(s)) - \bfM(\xs(t)) \Big) \bfz \leq&\ c_M |s-t| \,\|\bfw\|_{\bfM(\xs(t))}\|\bfz\|_{\bfM(\xs(t))} , \\
%         \bfw^T \Big( \bfA(\xs(s)) - \bfA(\xs(t)) \Big) \bfz \leq&\ c_A |s-t|\, \|\bfw\|_{\bfA(\xs(t))}\|\bfz\|_{\bfA(\xs(t))} ,
%     \end{align*}
%     for all $\bfw,\bfz\in \R^N$ and $s,t \in [0,T]$. The constants $c_M$ and $c_A$ depend on a bound of the $W^{1,\infty}\!$ norm of the surface velocity.
% \end{lemma}
%
%
%
% For solution-dependent surfaces we also need to relate the finite element matrices that correspond to nodes that are not on the exact surface.
We
use the following setting.
Let $\bfx,\bfy \in \R^{3 N}$ be two nodal vectors defining discrete surfaces $\Gamma_h(\bfx)$ and $\Gamma_h(\bfy)$, respectively. We let $\bfe= (e_j)=\bfx-\bfy \in \R^{  3  N}$. For the parameter $\theta\in[0,1]$, we consider the intermediate surface $\Gamma_h^\theta=\Gamma_h(\bfy+\theta\bfe)$ and the corresponding finite element functions given as
$$
e_h^\theta=\sum_{j=1}^N e_j \phi_j[\bfy+\theta\bfe]
$$
and, for any vectors $\bfw,\bfz \in \R^N$,
$$
    w_h^\theta=\sum_{j=1}^N w_j \phi_j[\bfy+\theta\bfe] \andquad z_h^\theta=\sum_{j=1}^N z_j \phi_j[\bfy+\theta\bfe] .
$$

\begin{lemma}
\label{lemma: matrix differences}
    In the above setting the following identities hold:
    \begin{align*}
        \bfw^T (\bfM(\bfx)-\bfM(\bfy)) \bfz =&\ \int_0^1 \int_{\Ga_h^\theta} w_h^\theta (\nabla_{\Ga_h^\theta} \cdot e_h^\theta) z_h^\theta \; \d\theta, \\
        \bfw^T (\bfA(\bfx)-\bfA(\bfy)) \bfz =&\ \int_0^1 \int_{\Ga_h^\theta} \nb_{\Ga_h^\theta} w_h^\theta \cdot (D_{\Ga_h^\theta} e_h^\theta)\nb_{\Ga_h^\theta}  z_h^\theta \; \d\theta ,
    \end{align*}
    with
    $D_{\Ga_h^\theta} e_h^\theta =  \textnormal{trace}(E) I_3 - (E+E^T)$ for $E=\nabla_{\Ga_h^\theta} e_h^\theta \in \R^{3\times 3}$.
\end{lemma}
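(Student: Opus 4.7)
My plan is to write each side of the two identities as a telescoping of a smooth $\theta$-dependent quantity between $\theta=0$ and $\theta=1$, and then to recover the right-hand side as the integral of the derivative via the fundamental theorem of calculus. Concretely, I would set
$$
F(\theta) := \bfw^T \bfM(\bfy+\theta\bfe)\,\bfz \andquad G(\theta) := \bfw^T \bfA(\bfy+\theta\bfe)\,\bfz,
$$
and observe that, by the very definition of the finite element mass and stiffness matrices applied at the nodal vector $\bfy+\theta\bfe$,
$$
F(\theta) = \int_{\Gamma_h^\theta} w_h^\theta\, z_h^\theta \andquad G(\theta) = \int_{\Gamma_h^\theta} \nabla_{\Gamma_h^\theta} w_h^\theta\cdot \nabla_{\Gamma_h^\theta} z_h^\theta.
$$

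The decisive observation is that the family $\Gamma_h^\theta=\Gamma_h(\bfy+\theta\bfe)$ is an evolving surface in the parameter $\theta$ whose velocity field is precisely $e_h^\theta$: indeed, the $j$th node moves as $\partial_\theta(y_j+\theta e_j)=e_j$, and $e_h^\theta=\sum_j e_j \phi_j[\bfy+\theta\bfe]$. The transport-property argument already recalled in Section~\ref{section:ESFEM} therefore applies verbatim with $t$ replaced by $\theta$, giving $\mat^\theta \phi_j[\bfy+\theta\bfe]=0$ and consequently $\mat^\theta w_h^\theta=0$ and $\mat^\theta z_h^\theta=0$, where $\mat^\theta$ is the material derivative along the flow of $e_h^\theta$ and where the nodal values $w_j, z_j$ are by construction $\theta$-independent.

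With this in place, the Leibniz formula for an evolving surface (see \cite{DziukElliott_ESFEM}) applied to $F$ immediately yields
$$
F'(\theta) = \int_{\Gamma_h^\theta} w_h^\theta\, z_h^\theta\, \nabla_{\Gamma_h^\theta}\cdot e_h^\theta,
$$
and integrating from $0$ to $1$ produces the first identity. For $G$, I would invoke the analogous transport formula for the Dirichlet energy on an evolving surface, which under $\mat^\theta w_h^\theta=\mat^\theta z_h^\theta=0$ reads
$$
G'(\theta) = \int_{\Gamma_h^\theta} \nabla_{\Gamma_h^\theta} w_h^\theta\cdot \bigl[ (\nabla_{\Gamma_h^\theta}\cdot e_h^\theta)\, I_3 - (E+E^T)\bigr]\, \nabla_{\Gamma_h^\theta} z_h^\theta,
$$
with $E=\nabla_{\Gamma_h^\theta} e_h^\theta$. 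Since $\mathrm{trace}(E)=\nabla_{\Gamma_h^\theta}\cdot e_h^\theta$, the bracket is exactly the tensor $D_{\Gamma_h^\theta} e_h^\theta$ from the statement, and integrating from $0$ to $1$ yields the second identity.

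The only genuine subtlety is justifying the precise form of the tensor in $G'(\theta)$. This follows by combining the surface Leibniz rule with the well-known identity for the material derivative of the tangential gradient of a function whose material derivative vanishes, namely $\mat^\theta(\nabla_{\Gamma_h^\theta} w_h^\theta)=-(\nabla_{\Gamma_h^\theta} e_h^\theta)^T \nabla_{\Gamma_h^\theta} w_h^\theta$ modulo normal components that are annihilated by the tangential gradient. Once this identity is used symmetrically in $w_h^\theta$ and $z_h^\theta$ and added to the $\nabla\cdot e_h^\theta$ term coming from the Leibniz formula, the tensor $D_{\Gamma_h^\theta} e_h^\theta$ appears exactly as stated. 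This is the one step requiring care; all remaining manipulations are direct applications of the fundamental theorem of calculus in $\theta$.
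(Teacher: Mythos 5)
Your proposal is correct and follows essentially the same route as the paper: interpret $\Gamma_h^\theta$ as an evolving surface in the parameter $\theta$ with velocity $e_h^\theta$, use the transport property of the basis functions to kill the material derivatives of $w_h^\theta$ and $z_h^\theta$, and apply the fundamental theorem of calculus in $\theta$ together with the Leibniz formula (for the mass case) and the transport formula for the Dirichlet integral (for the stiffness case). The paper simply cites \cite{DziukElliott_ESFEM} and \cite[Lemma 3.1]{DziukLubichMansour_rksurf} for the Dirichlet-energy formula, whereas you spell out its derivation from the material derivative of the tangential gradient; that extra detail is accurate.
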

\begin{proof}
Using the fundamental theorem of calculus and the Leibniz formula we write
\begin{align*}
    \bfw^T (\bfM
(\bfx)-\bfM(\bfy)) \bfz =&\ \int_{\Gamma_h(\bfx)} \!\!\! w_h^1 z_h^1 - \int_{\Gamma_h(\bfy)} \!\!\! w_h^0 z_h^0
    = \int_0^1 \frac{\d}{\d \theta} \int_{\Gamma_h^\theta} w_h^\theta z_h^\theta \d\theta \\
    =&\ \int_0^1 \int_{\Gamma_h^\theta} w_h^\theta (\nabla_{\Gamma_h^\theta} \cdot e_h^\theta) z_h^\theta \; \d\theta .
\end{align*}
In the last formula we used that the material derivatives (with respect to $\theta$) of $w_h^\theta$ and $z_h^\theta$ vanish, thanks to the
transport property of the basis functions.
The second identity is shown in the same way, using the formula for the derivative of the Dirichlet integral; see \cite{DziukElliott_ESFEM} and also \cite[Lemma 3.1]{DziukLubichMansour_rksurf}.
\QED \end{proof}

A direct consequence of Lemma~\ref{lemma: matrix differences} is the following conditional equivalence of norms:

\begin{lemma}
\label{lemma:cond-equiv}
If $\| \nabla_{\Gamma_h^\theta} \cdot e_h^\theta \|_{L^\infty(\Gamma_h^\theta)} \le \mu$ for $0\le\theta\le 1$, then
$$
 \| \bfw \|_{\bfM(\bfy+\bfe)}  \le e^{\mu/2} \, \| \bfw \|_{\bfM(\bfy)}.
$$
If $\| D_{\Gamma_h^\theta} e_h^\theta \|_{L^\infty(\Gamma_h^\theta)} \le \eta$ for $0\le\theta\le 1$, then
$$
 \| \bfw \|_{\bfA(\bfy+\bfe)}  \le e^{\eta/2} \, \| \bfw \|_{\bfA(\bfy)}.
$$
\end{lemma}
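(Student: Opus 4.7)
\textbf{Proof plan for Lemma~\ref{lemma:cond-equiv}.} The natural approach is to turn Lemma~\ref{lemma: matrix differences} into a differential inequality along the straight-line path of nodal vectors $\bfy + s\bfe$ for $s\in[0,1]$, and then close a Gronwall-type argument. Concretely, set
\begin{equation*}
\phi_M(s) := \|\bfw\|_{\bfM(\bfy+s\bfe)}^{2}, \qquad \phi_A(s) := \|\bfw\|_{\bfA(\bfy+s\bfe)}^{2},
\end{equation*}
and write $\Gamma_h^\sigma := \Gamma_h(\bfy+\sigma\bfe)$, $w_h^\sigma := \sum_{j} w_j \phi_j[\bfy+\sigma\bfe]$, $e_h^\sigma := \sum_{j} e_j \phi_j[\bfy+\sigma\bfe]$.

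First I would apply Lemma~\ref{lemma: matrix differences} with the pair of nodal vectors $(\bfy+s\bfe,\bfy)$, whose difference is $s\bfe$. After changing variable $\sigma = s\theta$ in the resulting integral, this reads
\begin{equation*}
\phi_M(s) - \phi_M(0) = \int_0^s \int_{\Gamma_h^\sigma} (w_h^\sigma)^2\, (\nabla_{\Gamma_h^\sigma}\cdot e_h^\sigma)\, \dd\sigma,
\end{equation*}
and analogously
\begin{equation*}
\phi_A(s) - \phi_A(0) = \int_0^s \int_{\Gamma_h^\sigma} \nabla_{\Gamma_h^\sigma} w_h^\sigma \cdot (D_{\Gamma_h^\sigma} e_h^\sigma)\, \nabla_{\Gamma_h^\sigma} w_h^\sigma \, \dd\sigma.
\end{equation*}
Differentiating in $s$ (the integrands depend continuously on $\sigma$) yields
\begin{equation*}
\phi_M'(s) = \int_{\Gamma_h^s} (w_h^s)^2\, (\nabla_{\Gamma_h^s}\cdot e_h^s), \qquad \phi_A'(s) = \int_{\Gamma_h^s} \nabla_{\Gamma_h^s} w_h^s \cdot (D_{\Gamma_h^s} e_h^s)\, \nabla_{\Gamma_h^s} w_h^s.
\end{equation*}

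Second I would use the hypothesis to bound the integrands pointwise. For the mass part, the assumption $\|\nabla_{\Gamma_h^s}\cdot e_h^s\|_{L^\infty(\Gamma_h^s)}\le \mu$ gives
\begin{equation*}
|\phi_M'(s)| \le \mu \int_{\Gamma_h^s} (w_h^s)^2 = \mu\, \phi_M(s).
\end{equation*}
For the stiffness part, since $D_{\Gamma_h^s} e_h^s$ is a symmetric $3\times 3$ matrix-valued function whose $L^\infty$ operator norm is bounded by $\eta$, one obtains
\begin{equation*}
|\phi_A'(s)| \le \eta \int_{\Gamma_h^s} |\nabla_{\Gamma_h^s} w_h^s|^2 = \eta\, \phi_A(s).
\end{equation*}

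Finally I would apply Gronwall's inequality (equivalently, integrate the differential inequality $\phi_M'(s) \le \mu\,\phi_M(s)$ on $[0,1]$) to conclude
\begin{equation*}
\phi_M(1) \le e^{\mu}\phi_M(0), \qquad \phi_A(1) \le e^{\eta}\phi_A(0),
\end{equation*}
and take square roots to obtain the two claimed inequalities. The only subtle step is the derivation of the formulas for $\phi_M'$ and $\phi_A'$, but this is essentially a direct reading of Lemma~\ref{lemma: matrix differences} along the path $s\mapsto \bfy+s\bfe$; once this is in place, the rest is standard. The remaining substance of the argument is entirely bookkeeping.
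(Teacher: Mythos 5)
Your proof is correct and follows essentially the same route as the paper's: Lemma~\ref{lemma: matrix differences} along the segment $\bfy+s\bfe$, an $L^\infty$ bound on $\nabla_{\Gamma_h^s}\cdot e_h^s$ (resp.\ $D_{\Gamma_h^s}e_h^s$), and Gronwall. The only cosmetic difference is that you pass to the differential form $\phi'(s)\le\mu\phi(s)$ whereas the paper works directly with the integral inequality $\phi(\tau)-\phi(0)\le\mu\int_0^\tau\phi(\theta)\,\d\theta$; both are standard statements of Gronwall and yield the same conclusion.
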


\begin{proof} By Lemma~\ref{lemma: matrix differences} we have for $0\le \tau \le 1$
\begin{align*}
   & \|\bfw\|_{\bfM(\bfy+\tau\bfe)}^2 - \|\bfw\|_{\bfM(\bfy)}^2 =\ \bfw^T ( \bfM(\bfy+\tau\bfe)-\bfM(\bfy))\bfw \\
   & =\ \int_0^\tau \int_{\Gamma_h^\theta} w_h^\theta\cdot (\nabla_{\Gamma_h^\theta} \cdot e_h^\theta) w_h^\theta \d\theta
    \leq \  \mu \int_0^\tau \| w_h^\theta \|_{L^2(\Gamma_h^\theta)}^2 \, \d\theta \\
    &=\ \mu \int_0^\tau \|\bfw\|_{\bfM(\bfy+\theta\bfe)}^2 \, \d\theta,
\end{align*}
and the first result follows from Gronwall's inequality. The second result is proved in the same way.
\QED \end{proof}

\bcl The following result, when used with $w_h^\theta$ equal to components of $e_h^\theta$, reduces the problem of checking
the conditions of the previous lemma  for  $0\le\theta\le 1$ to checking the condition just for the case $\theta=0$. \ecl

\begin{lemma} \label{lemma:theta-independence}
In the above setting, assume that
\begin{equation}\label{e-inf-bound}
\| \nabla_{\Gamma_h[\bfy]} e_h^0 \|_{L^\infty(\Gamma_h[\bfy])} \le \frac12.
\end{equation}Then, for $0\le\theta\le 1$ the function $w_h^\theta=\sum_{j=1}^N w_j \phi_j[\bfy+\theta\bfe]$ on $\Gamma_h^\theta=\Gamma[\bfy+\theta\bfe]$ is bounded by
$$
\| \nabla_{\Gamma_h^\theta} w_h^\theta \|_{L^p(\Gamma_h^\theta)} \le c_p \, \| \nabla_{\Gamma_h^0} w_h^0 \|_{L^p(\Gamma_h^0)}
\for 1\le p \le \infty,
$$
where $c_p$ depends only on $p$ (we have $c_\infty=2$).
\end{lemma}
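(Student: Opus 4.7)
The plan is to construct an explicit elementwise diffeomorphism $\Psi_\theta \colon \Gamma_h^0 \to \Gamma_h^\theta$ and transfer the desired bound by change of variables. On each element, let $F_\theta \colon \hat K \to K_\theta \subset \Gamma_h^\theta$ denote the polynomial parametrization built from the nodes $y_j+\theta e_j$ against the reference basis $\widehat\phi_j$; then set $\Psi_\theta := F_\theta \circ F_0^{-1}$, which is affine in $\theta$ on each element and glues to a diffeomorphism globally for small enough deformations. Because $w_h^\theta$ and $e_h^\theta$ are assembled from the same nodal values against the $\theta$-dependent basis $\phi_j[\bfy+\theta\bfe]$, which all pull back to the same reference polynomials, the transport identities $w_h^\theta\circ\Psi_\theta = w_h^0$ and $e_h^\theta\circ\Psi_\theta = e_h^0$ hold.

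Differentiating the parametrization in $\theta$ gives $\partial_\theta F_\theta(\hat p) = \sum_j e_j \widehat\phi_j(\hat p) = e_h^\theta(F_\theta(\hat p))$, so $\partial_\theta \Psi_\theta = e_h^\theta\circ\Psi_\theta$ on $\Gamma_h^0$. Denoting by $J_\theta := \D\Psi_\theta$ the tangential Jacobian, this yields the matrix ODE
\[
\partial_\theta J_\theta = \bigl((\nabla_{\Gamma_h^\theta} e_h^\theta)\circ\Psi_\theta\bigr)\,J_\theta, \qquad J_0 = I.
\]
Applying the chain rule to the transport identity for $e_h^\theta$ furnishes the key relation $(\nabla_{\Gamma_h^\theta} e_h^\theta)\circ\Psi_\theta = J_\theta^{-T}\nabla_{\Gamma_h^0}e_h^0$, which combined with the hypothesis \eqref{e-inf-bound} gives the pointwise bound $\|(\nabla_{\Gamma_h^\theta} e_h^\theta)\circ\Psi_\theta\| \le \tfrac12\|J_\theta^{-1}\|$.

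The main obstacle is precisely this self-referential control: the size of $\nabla e^\theta$ is being estimated in terms of $J_\theta^{-1}$, which is itself driven by $\nabla e^\theta$ through the ODE above. The plan is to overcome this via a quadratic comparison argument. From the ODE for $J_\theta^{-1}$ and the previous pointwise bound, $m(\theta) := \|J_\theta^{-1}\|_{L^\infty(\Gamma_h^0)}$ satisfies the integral inequality $m(\theta)\le 1+\tfrac12\int_0^\theta m(\tau)^2\,\d\tau$; comparison with $y' = y^2/2$, $y(0)=1$, whose solution $y(\theta) = 1/(1-\theta/2)$ stays bounded by $2$ on $[0,1]$, yields $m(\theta)\le 2$ and hence $\|(\nabla_{\Gamma_h^\theta} e_h^\theta)\circ\Psi_\theta\|_\infty \le 1$. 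Liouville's formula $\partial_\theta\det J_\theta = \mathrm{tr}\bigl((\nabla_{\Gamma_h^\theta} e_h^\theta)\circ\Psi_\theta\bigr)\det J_\theta$ then gives $|\det J_\theta|\le e$. With the gradient transport identity $\nabla_{\Gamma_h^\theta} w_h^\theta\circ\Psi_\theta = J_\theta^{-T}\nabla_{\Gamma_h^0}w_h^0$, the change of variables
\[
\int_{\Gamma_h^\theta}|\nabla_{\Gamma_h^\theta} w_h^\theta|^p \,\d\sigma_\theta = \int_{\Gamma_h^0}|J_\theta^{-T}\nabla_{\Gamma_h^0} w_h^0|^p\,|\det J_\theta|\,\d\sigma_0
\]
combined with $\|J_\theta^{-T}\|_\infty\le 2$ and $|\det J_\theta|\le e$ yields the conclusion with $c_p = 2\,e^{1/p}$ for $1\le p<\infty$, and the limit $p\to\infty$ gives $c_\infty = 2$.
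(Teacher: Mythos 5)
Your construction of $\Psi_\theta = F_\theta\circ F_0^{-1}$, the transport identities, and the change-of-variables step are exactly what the paper does (with the parametrization written as $Y_h^\theta(q_h)=q_h+\theta e_h^0(q_h)$). Where you part ways is in how the Jacobian is controlled, and here your route introduces an artificial difficulty. You correctly note that $\Psi_\theta$ is affine in $\theta$ on each element, yet you do not carry this forward to $J_\theta$: since $\Psi_\theta(q)=q+\theta e_h^0(q)$, the tangential Jacobian is itself affine, $J_\theta = I + \theta\,(\nabla_{\Gamma_h^0} e_h^0)^T$, with $\partial_\theta J_\theta=\nabla_{\Gamma_h^0} e_h^0$ constant in $\theta$. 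The ``key relation'' $(\nabla_{\Gamma_h^\theta}e_h^\theta)\circ\Psi_\theta = J_\theta^{-T}\nabla_{\Gamma_h^0}e_h^0$, substituted into your ODE $\partial_\theta J_\theta=((\nabla_{\Gamma_h^\theta}e_h^\theta)\circ\Psi_\theta)J_\theta$, simply recovers this constancy; the ``self-referential control'' you then battle with a quadratic comparison is a phantom. The paper just applies the triangle inequality: $|J_\theta\,\delta q| \ge |\delta q|-\theta|(\nabla e_h^0)^T\delta q| \ge \tfrac12|\delta q|$, giving $\|J_\theta^{-1}\|\le 2$ immediately and uniform control of the area element because $J_\theta$ is a bounded perturbation of the identity. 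Your Bihari comparison does give the same numerical bound $m(\theta)\le 1/(1-\theta/2)\le 2$, so your conclusion is correct, but it has two loose ends the direct argument avoids: (i) the ODE for $J_\theta^{-1}$ presupposes invertibility on all of $[0,1]$, which really requires a continuation argument, whereas the Neumann-series bound gives invertibility for free; and (ii) the Liouville estimate on $\det J_\theta$ is slightly off --- with $\|(\nabla e^\theta)\circ\Psi_\theta\|\le1$ in operator norm the trace of a $2\times2$ matrix can be as large as $2$, yielding $|\det J_\theta|\le e^{2}$, not $e$, so $c_p$ should be $2e^{2/p}$ (or more simply $(3/2)^2=9/4$ from the eigenvalue bound $|\lambda|\le 3/2$). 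None of this changes $c_\infty=2$. In short: same construction, correct conclusion, but the ODE/comparison machinery is a detour around a linear fact that you had already noticed.
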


\begin{proof} We describe the finite element parametrization of the discrete surfaces $\Gamma_h^\theta$ in the same way as in Section~\ref{section:ESFEM}, with $\theta$ instead of $t$ in the role of the time variable. We set
\begin{equation}\label{Y-h-theta}
Y_h^\theta(q_h) = Y_h(q_h,\theta) = \sum_{j=1}^N (y_j + \theta e_j) \phi_j[\bfy](q_h),\quad\ q_h\in\Gamma_h[\bfy] ,
\end{equation}
so that
$$
 \Gamma(Y_h^\theta)=\Gamma_h[\bfy+\theta\bfe]=\Gamma_h^\theta.
$$
Since $Y_h^0(q_h)=q_h$ for all $q_h\in\Gamma_h^0=\Gamma_h[\bfy]$,
the above formula can be rewritten as
$$
Y_h^\theta(q_h) = q_h + \theta e_h^0(q_h).
$$
Tangent vectors to $\Gamma_h^\theta$ at $y_h^\theta=Y_h^\theta(q_h)$ are therefore of the form
$$
\delta y_h^\theta = DY_h^\theta(q_h)\,\delta q_h = \delta q_h + \theta  \bigl(\nabla_{\Gamma_h^0} e_h^0(q_h)\bigr)^T \delta q_h,
$$
where $\delta q_h$ is a tangent vector to $\Gamma_h^0$ at $q_h$, or written more concisely,
$\delta q_h \in T_{q_h} \Gamma_h^0$.

Letting $|\cdot|$ denote the Euclidean norm of a vector in $\R^3$, we have at
$y_h^\theta=Y_h^\theta(q_h)$
\begin{align*}
|  \nabla_{\Gamma_h^\theta} w_h^\theta (y_h^\theta) |
&= \sup_{\delta y_h^\theta \in T_{y_h^\theta} \Gamma_h^\theta}
\frac{\bigl( \nabla_{\Gamma_h^\theta} w_h^\theta (y_h^\theta)\bigr)^T \delta y_h^\theta }{|\delta y_h^\theta|}
= \sup_{\delta y_h^\theta \in T_{y_h^\theta} \Gamma_h^\theta}
\frac{D w_h^\theta (y_h^\theta) \delta y_h^\theta }
{|\delta y_h^\theta|}
\\
&=  \sup_{\delta q_h \in T_{q_h} \Gamma_h^0}
\frac{D w_h^\theta (y_h^\theta) DY_h^\theta(q_h)\,\delta q_h  }
{|DY_h^\theta(q_h)\,\delta q_h |} .
\end{align*}
% Under the imposed condition $\| \nabla_{\Gamma_h[\bfy]} e_h^0 \|_{L^\infty(\Gamma_h[\bfy])} \le \frac12$ this matrix is invertible for every $q_h\in\Gamma_h^0$, and its inverse and determinant are uniformly bounded for all $0\le\theta\le 1$:
% \begin{equation}\label{inv-bound-2}
%  \|\bigl( \nabla_{\Gamma_h^0} Y_h^\theta (q_h) \bigr)^{-1}\| \le 2, \qquad q_h\in \Gamma_h^0.
% \end{equation}
By construction of $w_h^\theta$ and the transport property of the basis functions, we have
$$%\begin{align*}
w_h^\theta (Y_h^\theta(q_h))= \sum_{j=1}^N w_j \phi_j[\bfy+\theta\bfe](Y_h^\theta(q_h))
= \sum_{j=1}^N w_j \phi_j[\bfy](q_h) = w_h^0(q_h).
$$%\end{align*}
By the chain rule, this yields
$$
 D w_h^\theta (y_h^\theta) DY_h^\theta(q_h) = D w_h^0(q_h) .
$$
Under the imposed condition $\| \nabla_{\Gamma_h^0} e_h^0 \|_{L^\infty(\Gamma_h[\bfy])} \le \frac12$ we have for $0\le\theta\le 1$
$$
|DY_h^\theta(q_h)\,\delta q_h | \ge |\delta q_h| - \theta |\bigl(\nabla_{\Gamma_h^0} e_h^0(q_h)\bigr)^T \delta q_h| \ge
\tfrac12  |\delta q_h|.
$$
Hence we obtain
\begin{align*}
|  \nabla_{\Gamma_h^\theta} w_h^\theta (y_h^\theta) |  &=
\sup_{\delta q_h \in T_{q_h} \Gamma_h^0}
\frac{D w_h^0(q_h)\,\delta q_h  }
{|DY_h^\theta(q_h)\,\delta q_h |}
\\
&\le
\sup_{\delta q_h \in T_{q_h} \Gamma_h^0}
\frac{D w_h^0(q_h)\,\delta q_h  }
{\tfrac12 |\delta q_h |} = 2\, |\nabla_{\Gamma_h^0}w_h^0(q_h)|.
\end{align*}
This yields the stated result for $p=\infty$. For $1\le p <\infty$ we note in addition that in using the integral transformation formula we have a uniform bound between the surface elements, since $DY_h^\theta$ is close to the identity matrix by our smallness assumption on $\nabla_{\Gamma_h^0} e_h^0$.
\QED \end{proof}

%
%The arguments of the previous proof are also used in estimating the changes of the normal vectors between the surfaces $\Gamma_h(\bfx)=\Gamma_h(\bfy+\bfe)$ and $\Gamma_h(\bfy)$. We let $\Phi=Y_h^1$ be the parametrization \eqref{Y-h-theta} for $\theta=1$ of $\Gamma_h(\bfx)$ over $\Gamma_h(\bfy)$:
%\begin{equation} \label{Phi}
%\Phi = \sum_{j=1}^N x_j \phi_j[\bfy]\: : \: \Gamma_h(\bfy) \to \Gamma_h(\bfx)%(q_h), \quad q_h\in \Gamma_h[\bfy].
%\end{equation}
%and we let $e_h = e_h^0=\sum_{j=1}^N e_j \phi_j[\bfy]$.
%
%
%
%\begin{lemma}
%\label{lemma: normal vector perturbation}
%In the above setting, if the bound \eqref{e-inf-bound} is satisfied, then the unit normals on
%$\Gamma_h(\bfx)$ and $\Gamma_h(\bfy)$ differ by no more than
%$$
%\| \nu_{\Gamma_h(\bfx)}\circ\Phi - \nu_{\Gamma_h(\bfy)} \|_{L^p(\Gamma_h(\bfy))}
%\le C \| \nabla_{\Gamma_h^0} e_h \|_{L^p(\Gamma_h(\bfy))}
% $$
% with some constant $C_p$, for $1\le p \le \infty$.
%\end{lemma}
The arguments of the previous proof are also used in estimating the changes of the normal vectors on the various surfaces $\Gamma_h^\theta=\Gamma_h[\bfy+\theta \bfe]$.

\begin{lemma}
\label{lemma: normal vector perturbation}
Suppose that condition \eqref{e-inf-bound} is satisfied.
Let $y_h^\theta=Y_h^\theta(q_h)\in\Gamma_h^\theta$ be related by the parametrization \eqref{Y-h-theta} of $\Gamma_h^\theta$ over $\Gamma_h^0$, for $0\le\theta\le 1$. Then, the corresponding unit normal vectors differ by no more than
 $$
 |\nu_{\Gamma_h^\theta}(y_h^\theta) - \nu_{\Gamma_h^0}(y_h^0)| \le C\theta  | \nabla_{\Gamma_h^0} e_h^0(y_h^0) | ,
 %_{L^\infty(\Gamma_h^0)} , 
 $$
 with some constant $C$.
\end{lemma}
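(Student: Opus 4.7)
The plan is to compute the normal vector of $\Gamma_h^\theta$ at $y_h^\theta$ through the explicit parametrization $Y_h^\theta$ of \eqref{Y-h-theta} over $\Gamma_h^0$, and to compare it with the normal at $\theta=0$ by expanding in $\theta$. From the proof of Lemma~\ref{lemma:theta-independence} we already have the crucial identity
$$
 DY_h^\theta(q_h)\,\delta q_h = \delta q_h + \theta\bigl(\nabla_{\Gamma_h^0} e_h^0(q_h)\bigr)^T \delta q_h
\for \delta q_h \in T_{q_h}\Gamma_h^0.
$$

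First I would fix $q_h\in\Gamma_h^0$ and pick an orthonormal basis $\tau_1,\tau_2$ of $T_{q_h}\Gamma_h^0$, so that $\nu_{\Gamma_h^0}(y_h^0)=\tau_1\times\tau_2$. Their pushforwards $\widetilde\tau_i := DY_h^\theta(q_h)\tau_i = \tau_i + \theta G\tau_i$, with $G=(\nabla_{\Gamma_h^0} e_h^0(q_h))^T$, span $T_{y_h^\theta}\Gamma_h^\theta$, and therefore
$$
 \nu_{\Gamma_h^\theta}(y_h^\theta) = \frac{\widetilde\tau_1\times\widetilde\tau_2}{|\widetilde\tau_1\times\widetilde\tau_2|}.
$$
Expanding the cross product yields
$$
 \widetilde\tau_1\times\widetilde\tau_2 = \tau_1\times\tau_2 + \theta\bigl((G\tau_1)\times\tau_2+\tau_1\times(G\tau_2)\bigr) + \theta^2(G\tau_1)\times(G\tau_2),
$$
so that $|\widetilde\tau_1\times\widetilde\tau_2-\tau_1\times\tau_2| \le C\theta|G|$ with an absolute constant, where we used $|G|\le\tfrac12$ from \eqref{e-inf-bound}.

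Second I would invoke the smallness assumption \eqref{e-inf-bound} once more to bound $|\widetilde\tau_1\times\widetilde\tau_2|$ from below by a positive constant (since this quantity equals $1+O(\theta|G|)$), which makes the map $z\mapsto z/|z|$ Lipschitz on the relevant set. Combining the two bounds gives
$$
 |\nu_{\Gamma_h^\theta}(y_h^\theta) - \nu_{\Gamma_h^0}(y_h^0)| \le C\theta|G| = C\theta |\nabla_{\Gamma_h^0} e_h^0(q_h)|,
$$
which is the claimed estimate (recall $y_h^0=q_h$).

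The only mildly delicate point is the lower bound on $|\widetilde\tau_1\times\widetilde\tau_2|$ needed to normalize safely; this is handled by \eqref{e-inf-bound}, which ensures that $DY_h^\theta$ is a uniformly non-degenerate perturbation of the identity along the tangent plane. Everything else is a short algebraic expansion. \QED
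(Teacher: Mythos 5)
Your proposal is correct and takes essentially the same approach as the paper: express $\nu_{\Gamma_h^\theta}(y_h^\theta)$ as the normalized cross product of the pushed-forward orthonormal tangent basis $\widetilde\tau_i=DY_h^\theta(q_h)\tau_i=\tau_i+\theta G\tau_i$, and conclude by local Lipschitz continuity of the map $(\widetilde\tau_1,\widetilde\tau_2)\mapsto\widetilde\tau_1\times\widetilde\tau_2/|\widetilde\tau_1\times\widetilde\tau_2|$, with \eqref{e-inf-bound} guaranteeing the denominator stays bounded away from zero. You merely spell out the cross-product expansion that the paper leaves implicit; note that the quantitative lower bound on $|\widetilde\tau_1\times\widetilde\tau_2|$ is best obtained from the injectivity estimate $|DY_h^\theta(q_h)\,\delta q_h|\ge\tfrac12|\delta q_h|$ (giving smallest singular value $\ge\tfrac12$, hence Jacobian $\ge\tfrac14$) rather than from the triangle inequality on the expanded cross product, which alone does not quite close the gap.
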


\begin{proof}
 Let $\delta q_h^1$ and $\delta q_h^2$ be two linearly independent tangent vectors of $\Gamma_h^0$ at $q_h\in\Gamma_h^0$
 (which may be chosen orthogonal to each other and \bbk of unit length with respect to the Euclidean norm\ebk). With
 $\delta y_h^{\theta,i} = DY_h^\theta(q_h)\,\delta q_h^i = \delta q_h^i + \theta  \bigl(\nabla_{\Gamma_h^0} e_h(q_h)\bigr)^T \delta q_h^i$ for $i=1,2$ we then have, for $0\le\theta\le 1$,
 $$
 \nu_{\Gamma_h^\theta}(y_h^\theta) = \frac{\delta y_h^{\theta,1}\times \delta y_h^{\theta,2}}{|\delta y_h^{\theta,1}\times \delta y_h^{\theta,2}|}.
 $$
 Since this expression is a locally Lipschitz continuous function of the two vectors,
 %it follows that
% $$
% | \nu_{\Gamma_h(\bfx)}\circ\Phi(q_h) - \nu_{\Gamma_h(\bfy)}(q_h)| \le C
% |\nabla_{\Gamma_h^0} e_h(q_h)|, \qquad q_h \in \Gamma_h(\bfy),
%$$
%and
the result follows. (The imposed bound \eqref{e-inf-bound} is sufficient to ensure the linear independence of the vectors $\delta y_h^{\theta,i}$.)
\QED \end{proof}

We denote by $\mat_\theta f$ the material derivative of a function $f=f(y_h^\theta,\theta)$ depending on $\theta\in[0,1]$ and $y_h^\theta\in\Gamma_h^\theta$:
$$
\mat_\theta f = \frac\d{\d\theta} f (y_h^\theta,\theta).
%= \frac{\partial f}{\partial\theta}  +\frac{\partial f}{\partial y} e_h^\theta.
$$
From Lemma~\ref{lemma: normal vector perturbation} together with
Lemma~\ref{lemma:theta-independence} we  obtain the following bound:
\begin{lemma}\label{lemma: mat normal vector} If condition \eqref{e-inf-bound} is satisfied, then
$$
\| \mat_\theta  \nu_{\Gamma_h^\theta} \|_{L^p(\Gamma_h^\theta)} \le C
\| \nabla_{\Gamma_h^0} e_h^0 \|_{L^p(\Gamma_h^0)} ,
$$
where $C$ is independent of $0\le\theta\le 1$ and $1\le p \le \infty$.
\end{lemma}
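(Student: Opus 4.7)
The plan is to first derive a pointwise bound of the form $|\mat_\theta \nu_{\Gamma_h^\theta}(y_h^\theta)| \le C\,|\nabla_{\Gamma_h^\theta} e_h^\theta(y_h^\theta)|$ on $\Gamma_h^\theta$, and then convert the resulting $L^p$ estimate on $\Gamma_h^\theta$ back to one on $\Gamma_h^0$ by applying Lemma~\ref{lemma:theta-independence} componentwise to $e_h^\theta$.

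For the pointwise bound, I would exploit the following re-parametrization invariance of the family $\{\Gamma_h^\theta\}$. Fix $\theta_0 \in [0,1)$ and observe that
$$Y_h^\theta(q_h) = Y_h^{\theta_0}(q_h) + (\theta-\theta_0)\,e_h^0(q_h).$$
By the transport property of the basis functions, $e_h^0(q_h) = e_h^{\theta_0}(Y_h^{\theta_0}(q_h))$, so writing $y_h^{\theta_0} = Y_h^{\theta_0}(q_h)$ we can rephrase this as
$$Y_h^\theta(q_h) = y_h^{\theta_0} + (\theta-\theta_0)\,e_h^{\theta_0}(y_h^{\theta_0}).$$
This is precisely the parametrization \eqref{Y-h-theta} of the section, but starting from $\Gamma_h^{\theta_0}$ instead of $\Gamma_h^0$ and with $\tau=\theta-\theta_0$ in the role of the parameter. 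The smallness condition \eqref{e-inf-bound} on $\Gamma_h^0$ transfers (up to the factor $c_\infty = 2$) to the analogous condition on $\Gamma_h^{\theta_0}$ by Lemma~\ref{lemma:theta-independence} applied to the components of $e_h^0$, so Lemma~\ref{lemma: normal vector perturbation} is applicable in the new parametrization and yields the finite-difference estimate
$$|\nu_{\Gamma_h^\theta}(y_h^\theta) - \nu_{\Gamma_h^{\theta_0}}(y_h^{\theta_0})| \le C(\theta-\theta_0)\,|\nabla_{\Gamma_h^{\theta_0}} e_h^{\theta_0}(y_h^{\theta_0})|.$$
Since the normal vector is a smooth function of $\theta$ (the cross-product representation has a denominator bounded away from zero thanks to \eqref{e-inf-bound}), dividing by $\theta-\theta_0$ and letting $\theta \to \theta_0$ gives the desired pointwise bound at $\theta_0$. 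As $\theta_0\in[0,1)$ was arbitrary, the bound holds on $[0,1]$ by continuity.

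Finally, taking $L^p$ norms on $\Gamma_h^\theta$ and invoking Lemma~\ref{lemma:theta-independence} componentwise for the vector-valued finite element function $e_h^\theta$ gives
$$\|\mat_\theta \nu_{\Gamma_h^\theta}\|_{L^p(\Gamma_h^\theta)} \le C\,\|\nabla_{\Gamma_h^\theta} e_h^\theta\|_{L^p(\Gamma_h^\theta)} \le C\,c_p\,\|\nabla_{\Gamma_h^0} e_h^0\|_{L^p(\Gamma_h^0)},$$
with a constant independent of $\theta \in [0,1]$ and $p \in [1,\infty]$, which is the claim. The only step that requires genuine care is the re-parametrization: once we recognize that the transport property preserves the nodal displacements $e_j$ when the ``starting surface'' is shifted from $\Gamma_h^0$ to $\Gamma_h^{\theta_0}$, the remaining work (differentiating a normalized cross product and a routine change of variables with bounded surface-measure Jacobian) is exactly the bookkeeping already performed in Lemmas~\ref{lemma:theta-independence} and~\ref{lemma: normal vector perturbation}.
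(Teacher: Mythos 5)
Your proposal is correct and takes essentially the same approach as the paper: the paper's one-line phrase ``by Lemma~\ref{lemma: normal vector perturbation} with $\Gamma_h^\theta$ in the role of $\Gamma_h^0$'' is precisely the re-parametrization you spell out explicitly, followed by the same $\tau\to 0$ limit and the same application of Lemma~\ref{lemma:theta-independence} to return to $\Gamma_h^0$. Your writeup is actually more careful than the paper's in noting, via Lemma~\ref{lemma:theta-independence}, that the smallness hypothesis transfers from $\Gamma_h^0$ to $\Gamma_h^{\theta_0}$ only up to the factor $c_\infty=2$ (so the transferred bound is $\le 1$ rather than $\le\tfrac12$); this is a benign point, since the proof of Lemma~\ref{lemma: normal vector perturbation} only needs the tangent vectors $\delta y_h^{\theta,i}$ to remain linearly independent, which any bound strictly below $1$ guarantees.
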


\begin{proof} By Lemma~\ref{lemma: normal vector perturbation} with
$\Gamma_h^\theta$ in the role of $\Gamma_h^0$, we obtain
$$
|\mat_\theta  \nu_{\Gamma_h^\theta} (y_h^\theta)| = \bigl| \lim_{\tau\to 0}
\bigl( \nu_{\Gamma_h^{\theta+\tau}}(y_h^{\theta+\tau}) -  \nu_{\Gamma_h^\theta}(y_h^\theta) \bigr)/\tau \bigr|
\le C | \nabla_{\Gamma_h^\theta} e_h^\theta(y_h^\theta)|,
$$
which implies
$$
\| \mat_\theta  \nu_{\Gamma_h^\theta} \|_{L^p(\Gamma_h^\theta)} \le C
\| \nabla_{\Gamma_h^\theta} e_h^\theta \|_{L^p(\Gamma_h^\theta)},
$$
and Lemma~\ref{lemma:theta-independence} completes the proof.
\QED \end{proof}

We finally need a result that bounds the time derivatives of the mass and stiffness matrices corresponding to
nodes on the exact smooth surface $\Gamma(t)$.  The following result is a
direct consequence of \cite[Lemma 4.1]{DziukLubichMansour_rksurf} .

\begin{lemma}% and \cite{LubichMansourVenkataraman_bdsurf} Lemma 2.2]
\label{lemma: matrix derivatives}
    Let $\Gamma(t)=\Gamma(X(\cdot,t))$, $t \in [0,T]$, be a smoothly evolving family of smooth closed surfaces, and
    let the vector $\xs(t) \in \R^{3N}$ collect the nodes $x_j^*(t)=X(p_j,t)$.  Then,
    \begin{align*}
        \bfw^T \frac\d{\d t}\bfM(\xs(t))  \bfz \leq&\ C  \,\|\bfw\|_{\bfM(\xs(t))}\|\bfz\|_{\bfM(\xs(t))} , \\
        \bfw^T \frac\d{\d t}\bfA(\xs(t))  \bfz \leq&\ C \, \|\bfw\|_{\bfA(\xs(t))}\|\bfz\|_{\bfA(\xs(t))} ,
    \end{align*}
    for all $\bfw,\bfz\in \R^N$. %and $s,t \in [0,T]$. 
    The constant $C$ depends only on a bound of the $W^{1,\infty}\!$ norm of the surface velocity.
\end{lemma}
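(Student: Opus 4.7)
My plan is to differentiate in $t$ the integral representations of $\bfw^{T}\bfM(\xs(t))\bfz$ and $\bfw^{T}\bfA(\xs(t))\bfz$ using the Leibniz formula for evolving surfaces, applied along the flow of the \emph{interpolated} surface velocity
$$
v_h^{\ast}(\cdot,t) \,=\, \sum_{j=1}^{N} \dot x_j^{\ast}(t)\,\phi_j[\xs(t)],
$$
which, by construction, is the discrete velocity transporting $\Gamma_h(\xs(t))=\Gamma(X_h^{\ast}(\cdot,t))$. Because the nodal values $w_j,z_j$ are time-independent, the transport property of the basis functions (Section~\ref{section:ESFEM}) implies that the material derivatives of $w_h=\sum_j w_j\phi_j[\xs(t)]$ and $z_h=\sum_j z_j\phi_j[\xs(t)]$ along $v_h^{\ast}$ vanish. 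The two derivative formulae can then be estimated pointwise by the $W^{1,\infty}$ norm of $v_h^{\ast}$.

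For the mass matrix, writing $\bfw^{T}\bfM(\xs(t))\bfz = \int_{\Gamma_h(\xs(t))} w_h z_h$ and applying the Leibniz formula with the two material-derivative contributions dropped yields
$$
\bfw^{T}\diff \bfM(\xs(t))\,\bfz \,=\, \int_{\Gamma_h(\xs(t))} w_h\, z_h\,\bigl(\nb_{\Gamma_h(\xs)}\!\cdot v_h^{\ast}\bigr).
$$
Cauchy--Schwarz together with the identification \eqref{M-L2} gives the claim with constant $\|\nb_{\Gamma_h(\xs)}\!\cdot v_h^{\ast}\|_{L^{\infty}}$. The stiffness matrix is treated identically, using the derivative-of-the-Dirichlet-integral formula (obtained exactly as in Lemma~\ref{lemma: matrix differences} but with $t$ in the role of $\theta$; see also \cite[Lemma~3.1]{DziukLubichMansour_rksurf}):
$$
\bfw^{T}\diff \bfA(\xs(t))\,\bfz \,=\, \int_{\Gamma_h(\xs(t))} \nb_{\Gamma_h(\xs)} w_h \cdot D_{\Gamma_h(\xs)}(v_h^{\ast})\,\nb_{\Gamma_h(\xs)} z_h,
$$
with $D_{\Gamma_h(\xs)}(v_h^{\ast}) = (\nb_{\Gamma_h(\xs)}\!\cdot v_h^{\ast})I_{3} - \bigl(\nb_{\Gamma_h(\xs)}v_h^{\ast} + (\nb_{\Gamma_h(\xs)}v_h^{\ast})^{T}\bigr)$. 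Cauchy--Schwarz and \eqref{A-H1} then produce a bound with constant proportional to $\|v_h^{\ast}\|_{W^{1,\infty}(\Gamma_h(\xs))}$.

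The only remaining point, and the only place where the hypothesis on the exact surface velocity is actually used, is the estimate
$$
\|v_h^{\ast}\|_{W^{1,\infty}(\Gamma_h(\xs(t)))} \,\le\, C\,\|v\|_{W^{1,\infty}(\Gamma(t))}.
$$
I expect this to be the main, but essentially routine, technical step: it follows from $L^{\infty}$-stability of nodal interpolation on the shape-regular, quasi-uniform mesh, combined with the uniform closeness of the interpolated surface $\Gamma_h(\xs(t))$ to the exact surface $\Gamma(t)$, so that induced geometric quantities (metric, normal, pullbacks used to relate tangential gradients) are uniformly bounded and $v_h^{\ast}$ may be compared with the finite element nodal interpolant of $v$. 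No new geometric estimate beyond the standard ones collected in Section~\ref{section: geometric estimates} is required, and the constant $C$ inherits the stated dependence on $\|v\|_{W^{1,\infty}}$.
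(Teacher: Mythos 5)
Your proof is correct, and it is essentially the argument underlying the result that the paper cites: the paper does not prove this lemma itself but instead states it as ``a direct consequence of [Lemma~4.1]{DziukLubichMansour\_rksurf}'', and the standard proof of that cited lemma is exactly the transport-property/Leibniz computation you perform. Your formulas
$\bfw^T \diff\bfM(\xs)\,\bfz = \int_{\Gamma_h(\xs)} w_h z_h\,(\nabla_{\Gamma_h(\xs)}\cdot v_h^*)$
and the analogous one for $\bfA$ with $D_{\Gamma_h(\xs)} v_h^*$ are correct (once the material derivatives of $w_h,z_h$ are killed by the transport property and the constancy of $\bfw,\bfz$), and the Cauchy--Schwarz step is exactly what produces the stated norms. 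Your identification of the one non-trivial technical point — that $\|v_h^*\|_{W^{1,\infty}(\Gamma_h(\xs))}$ is controlled by $\|v\|_{W^{1,\infty}(\Gamma)}$ via $W^{1,\infty}$-stability of nodal interpolation on the admissible (shape-regular, quasi-uniform) mesh together with the norm equivalences between $\Gamma_h^*$ and $\Gamma$ — is also right, and this is indeed where the hypothesis of bounded $W^{1,\infty}$ surface velocity enters. You sketch this last step rather than carry it out, but it is standard and matches the paper's framework (Lemma~\ref{lemma: equivalence of norms}); flagging it explicitly is appropriate.
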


\section{Stability of discretized surface motion under a prescribed driving-term}
\label{section: uncoupled problem}

\bcl In this  section we begin the stability analysis by first studying the stability of the spatially discretized velocity law with a given inhomogeneity instead of a coupling to the surface PDE. This allows us to present, in a technically simpler setting, some of the basic arguments that are used in our approach to stability estimates, which works with the matrix-vector formulation. The stability of the spatially discretized problem including coupling with the surface PDE is then studied in Section~\ref{section: coupled problem} by similar, but more elaborate arguments.
\ecl

\subsection{Uncoupled velocity law and its semidiscretization}

In this section we consider the velocity law without coupling to a surface PDE:
\begin{equation*}
%\label{eq: uncoupled problem}
    \begin{aligned}
        v - \alpha \laplace_{\Ga(X)} v =&\ g\, \nu_{\Ga(X)}, \\
    \end{aligned}
\end{equation*}
where $g:\R^3\times\R\to\R$ is a given continuous function of $(x,t)$, and $\alpha>0$ is a fixed parameter. This problem is considered together with the ordinary differential equations \eqref{velocity} for the positions $X$ determining the surface $\Gamma(X)$. Initial values are specified for~$X$.

\bcl The weak formulation is given by the second formula of \eqref{eq: coupled problem - weak form}
with the function $g$ considered here. This is considered together with the ordinary differential equations \eqref{velocity} for the positions $X$. \ecl
%
%We consider the weak formulation: Find $v(\cdot,t) \in H^1(\Ga(X(\cdot,t)))^3$ such that the following weak equation holds for every test function $\psi \in H^1(\Ga(X(\cdot,t)))^3$:
%\begin{equation*}
%%\label{eq: uncoupled problem - weak form}
%    \begin{aligned}
%        \int_{\Ga(X)} \!\! v \cdot \psi + \alpha \int_{\Ga(X)} \!\! \!\! \nabla_{\Ga(X)} v \cdot \nabla_{\Ga(X)} \psi =&\ \int_{\Ga(X)} g \,  \nu_{\Ga(X)} \cdot \psi ,
%    \end{aligned}
%\end{equation*}
%alongside with the ordinary differential equations \eqref{velocity} for the positions $X$.

Then the finite element spatial semidiscretization of this problem reads as: Find the unknown nodal vector $\bfx(t)\in \R^{3N}$ and the unknown finite element function $v_h(\cdot,t)\in S_h(\bfx(t))^3$ such that the following semidiscrete equation holds for every $\psi_h \in S_h(\bfx(t))^3$:
\begin{equation}
\label{eq: uncoupled problem - semidiscrete}
    \begin{aligned}
        \int_{\Ga_h(\bfx)} \!\! v_h \cdot \psi_h + \alpha \int_{\Ga_h(\bfx)} \!\! \!\!   \nabla_{\Ga_h(\bfx)} v_h \cdot \nabla_{\Ga_h(\bfx)} \psi_h =&\ \int_{\Ga_h(\bfx)} \!\! g \,\nu_{\Ga_h(\bfx)} \cdot \psi_h ,
    \end{aligned}
\end{equation}
together with the ordinary differential equations \eqref{xh}. As before, the nodal vector of the initial positions $\bfx(0)$ is taken from the exact initial values at the nodes $x_j^0$ of the triangulation of the given initial surface $\Gamma^0$: $x_j(0) = x_j^0$ for $j=1,\dots,N$.

As in Section~\ref{subsection:DAE}, the nodal vectors $\bfv\in\R^{3N}$ of the finite element function $v_h$  together with the surface nodal vector $\bfx\in\R^{3N}$ satisfy a system of differential-algebraic equations (DAEs).
We obtain from \eqref{eq: uncoupled problem - semidiscrete} and \eqref{xh} the following coupled DAE system for the nodal values $\bfv$ and $\bfx$:
\begin{equation}
\label{eq: DAE form - uncoupled}
    \begin{aligned}
        \bfK(\bfx) \bfv =&\ \bfg(\bfx,t) , \\
        \dot\bfx =&\ \bfv .
    \end{aligned}
\end{equation}%, by interpolation,
Here the matrix $\bfK(\bfx)=I_3\otimes (\bfM(\bfx)+\alpha\bfA(\bfx))$ is from \eqref{eq: K matrix def}, and the driving term $\bfg(\bfx,t)$ is given by
\begin{equation*}
    \bfg(\bfx,t))\vert_{3(j-1)+\ell} = \int_{\Ga_h(\bfx)} g(\cdot,t) \,\bigl(\nu_{\Ga_h(\bfx)}\bigr)_\ell \, \phi_j[\bfx] , \qquad (j = 1,  \dotsc, N, \ \ell=1,2,3) .
\end{equation*}

\subsection{Error equations}

We denote by
$$
\bfx^*(t)=\bigl( x_j^*(t)\bigr)\in \R^{3N}\quad\hbox{with}\quad x_j^*(t)=X(p_j,t) , \qquad (j=1,\dots,N)
$$
the nodal vector of the {\it exact} positions on the surface $\Gamma(X(\cdot,t))$. This defines a discrete surface $\Gamma_h(\bfx^*(t))$ that interpolates the exact surface $\Gamma(X(\cdot,t))$.
%
% \kb{Later on we will also need a scalar valued, $H^1(\Gamma(X(\cdot,t)) \to S_h(\bfx^*(t))$, version of this.}{Let $\Pi_h(t):H^1(\Gamma(X(\cdot,t))^3\to S_h(\bfx^*(t))^3$ be a projection operator onto the finite element space over the interpolated exact surface at time $t$, e.g., piecewise polynomial interpolation or a Ritz projection.}

We consider \bcl the interpolated  exact velocity
$$
 v_h^*(\cdot, t)=\sum_{j=1}^N v_j^*(t) \phi_j[\bfx^*(t)] \quad\ \hbox{ with }\quad\ v_j^*(t)=\dot x_j^*(t),
$$
with the corresponding nodal vector
$$
\bfv^*(t)=\bigl( v_j^*(t)\bigr)= \dot \bfx^*(t) \in \R^{3N}.
$$
\ecl
%We assume that
%$$
%\| v_h^*(\cdot,t)^l - v(\cdot,t) \| _{H^1(\Gamma(t))^3} \le  C_1h^k\with k\ge 2.
%$$
Inserting $v_h^*$ and $\bfx^*$ in place of the numerical solution $v_h$ and $\bfx$ into \eqref{eq: uncoupled problem - semidiscrete} yields a defect $d_h(\cdot,t)\in S_h(\bfx^*(t))^3$: for every $\psi_h \in S_h(\bfx^*(t))^3$,
$$
    \int_{\Ga_h(\bfx^*)} \!\!\!\! v_h^* \cdot \psi_h + \alpha \int_{\Ga_h(\bfx^*)} \!\!\!\!\!    \nabla_{\Ga_h(\bfx^*)} v_h^* \cdot \nabla_{\Ga_h(\bfx^*)} \psi_h \\
    = \int_{\Ga_h(\bfx^*)} \!\!\!\!  g \,\nu_{\Ga_h(\bfx^*)} \cdot \psi_h
    +\int_{\Ga_h(\bfx^*)} \!\!\!\!  d_h \cdot \psi_h.
$$
With $d_h(\cdot,t) = \sum_{j=1}^N d_j(t) \phi_j[\bfx^*(t)]$ and the corresponding nodal vector
$\bfd_{\bfv}(t)=\bigl( d_j(t)\bigr) \in \R^{3N}$ we then have $(I_3\otimes\bfM(\bfx^*(t)))\bfd_{\bfv}(t)$ as the defect on inserting $\bfx^*$ and $\bfv^*$
in the first equation of
\eqref{eq: DAE form - uncoupled}.  With $\bfM^{[3]}(\xs)= I_3\otimes\bfM(\xs)$, we thus have \ecl
\begin{equation}
\label{eq: DAE form - uncoupled - solution}
    \begin{aligned}
        \bfK(\xs) \vs =&\ \bfg(\xs) + \bfM^{[3]}(\xs)\dv, \\
       \dotxs =&\ \bcl \vs \ecl.
    \end{aligned}
\end{equation}
We denote the errors in the surface nodes and in the velocity by $\ex=\bfx-\xs$ and $\ev=\bfv-\vs$, respectively. We rewrite the velocity law in \eqref{eq: DAE form - uncoupled} as
\begin{equation*}
    \bfK(\xs) \bfv = -\big(\bfK(\bfx)-\bfK(\xs)\big) \vs -\big(\bfK(\bfx)-\bfK(\xs)\big)\ev + \bfg(\bfx) .
\end{equation*}
Then, by subtracting \eqref{eq: DAE form - uncoupled - solution} from the above version of 
\eqref{eq: DAE form - uncoupled}, we obtain the following error equations for the uncoupled problem:
\begin{equation}
\label{eq: error equations - uncoupled}
    \begin{aligned}
        \bfK(\xs) \ev =&\ -\big(\bfK(\bfx)-\bfK(\xs)\big)\vs -\big(\bfK(\bfx)-\bfK(\xs)\big) \ev \\
        &\ + \big(\bfg(\bfx) - \bfg(\xs)\big) - \bfM^{[3]}(\xs)\dv , \\
        \dotex =&\ \bcl \ev \ecl .
    \end{aligned}
\end{equation}
When no confusion can arise, we write in the following $\bfM(\xs)$ for $\bfM^{[3]}(\xs)$ and
$\| \cdot \|_{H^1(\Gamma)}$ for $\| \cdot \|_{H^1(\Gamma)^3}$, etc.

\subsection{Norms}
\label{subsection: norms}
We recall that $\bfK(\bfx^*)=I_3\otimes (\bfM(\bfx^*)+\alpha\bfA(\bfx^*))$  and, for $\bfw\in\R^{3N}$ and the corresponding finite element function $w_h= \sum_{j=1}^N w_j \phi_j[\bfx^*] \in S_h(\bfx^*)^3$, we consider the norm
\begin{align*}
\| \bfw \|_{\bfK(\bfx^*)}^2 := & \ \bfw^T \bfK(\bfx^*) \bfw
\\ = & \ \|w_h \|_{L^2(\Gamma_h(\bfx^*))}^2 + \alpha
\| \nabla_{\Gamma_h(\bfx^*)} w_h \|_{L^2(\Gamma_h(\bfx^*))}^2 \sim \| w_h \|_{H^1(\Gamma_h(\bfx^*))}^2.
\end{align*}
For convenience, we will take $\alpha=1$ in the remainder of this section, so that the last norm equivalence becomes an equality.
For the defect $d_h\in S_h(\bfx^*)^3$  we use the dual norm
(cf.~\cite[Proof of Theorem 5.1]{LubichMansourVenkataraman_bdsurf})
\begin{equation}
\label{eq: star and H^-1 norm equality}
    \begin{aligned}
        & \|d_h\|_{H_h\inv(\Gamma_h(\bfx^*))} :=
        \sup_{0\neq \psi_h \in S_h(\bfx^*)^3} \frac{\int_{\Gamma_h(\bfx^*)} d_h \cdot  \psi_h}{\|\psi_h\|_{H^1(\Gamma_h(\bfx^*))^3}}
        \\
        &=  \sup_{0\neq \bfz \in \R^{3N}} \frac{\dv^T \bfM(\xs) \bfz}{(\bfz^T \bfK(\xs) \bfz)^\half}
        = \sup_{0\neq \bfw \in \R^{3N}} \frac{\dv^T \bfM(\xs)\bfK(\xs)^{-\half} \bfw}{(\bfw^T \bfw)^\half}
        \\
        &=\ \| \bfK(\xs)^{-\half} \bfM(\xs)\dv\|_2 =  (\dv^T \bfM(\xs)\bfK(\xs)\inv \bfM(\xs)\dv )^\half.
    \end{aligned}
\end{equation}
We denote
$$
 \|\dv\|_{\star,\xs}^2 := \dv^T \bfM(\xs)\bfK(\xs)\inv \bfM(\xs)\dv ,
$$
so that
\begin{equation*}
%\label{dual-norm-h}
  \|\dv\|_{\star,\xs} = \|d_h\|_{H_h\inv(\Gamma_h(\bfx^*))}.
\end{equation*}

\subsection{Stability estimate}
The following stability result holds for the errors $\ev$ and $\ex$, under an assumption of small defects.
It will be shown in Section~\ref{section: defect bounds} that this assumption is satisfied if the exact solution is sufficiently smooth. 
\begin{proposition}
\label{proposition: stability - uncoupled problem}
 \bcl   Suppose that the defect is bounded as follows, with $\kappa>1$:
    \begin{equation*}
       \|\dv(t)\|_{\star,\xs(t)} \leq c h^\kappa ,
%        \andquad
%        \|\dx(t)\|_{\bfK(\xs(t))} \leq c h^\kappa, 
        \qquad t\in[0,T] .
    \end{equation*}
    Then there exists  $h_0>0$ such that the following error bounds hold for $h\leq h_0$ and $0\le t \le T$:
    \begin{align}
    \label{ex}
       & \|\ex(t)\|_{\bfK(\xs(t))}^2 \leq C \int_0^t  \|\dv(s)\|_{\star,\xs}^2  \d s,
    \\
    \label{ev}
      &  \|\ev(t)\|_{\bfK(\xs(t))}^2 \le C \|\dv(t)\|_{\star,\xs}^2
        + C \int_0^t  \|\dv(s)\|_{\star,\xs}^2  \d s .
    \end{align}
%
%    \begin{equation}
%    \label{eq: stability estimates - O(h^k)}
%        \begin{aligned}
%            \|\ev(t)\|_{\bfK(\xs(t))} \leq  Ch^k \andquad
%            \|\ex(t)\|_{\bfK(\xs(t))} \leq  Ch^k ,
%        \end{aligned}
%        \qquad t\in[0,T] .
%    \end{equation}
    The constant $C$ is independent of $t$ and $h$, but depends on the final time $T$ and on the  regularization parameter $\alpha$.
%    \redon The constant $C>0$ is independent of $t$ and $h$, but depends on the final time $T$, the fixed regularizing parameter $\alpha$, the constant $c$, and on the solution $v$ and $X$. \redoff
\end{proposition}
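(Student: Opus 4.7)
The plan is to derive a coupled energy estimate for $\|\ex(t)\|_{\bfK(\xs(t))}^2$ and $\|\ev(t)\|_{\bfK(\xs(t))}^2$ that closes under Gronwall's inequality, supplemented by a bootstrap (continuation) argument that ensures smallness of the $L^\infty$ norm of the finite element function $e_{x,h}\in S_h(\xs)^3$ associated with the nodal vector $\ex$. The hypothesis $\kappa>1$ will enter only in this final bootstrap step.

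First I would differentiate $\|\ex\|_{\bfK(\xs(t))}^2 = \ex^T \bfK(\xs)\ex$ in time. Using $\dotex = \ev$ and the symmetry of $\bfK(\xs)$, the cross term is bounded by $\|\ex\|_{\bfK(\xs)}^2 + \|\ev\|_{\bfK(\xs)}^2$, while Lemma~\ref{lemma: matrix derivatives} (applied componentwise to $\bfK(\xs)=I_3\otimes(\bfM(\xs)+\alpha\bfA(\xs))$) controls the contribution of $\frac{\d}{\d t}\bfK(\xs)$ by $C\|\ex\|_{\bfK(\xs)}^2$. This yields
\begin{equation*}
\frac{\d}{\d t}\|\ex\|_{\bfK(\xs)}^2 \,\le\, C\|\ex\|_{\bfK(\xs)}^2 + \|\ev\|_{\bfK(\xs)}^2,
\end{equation*}
so it remains to bound $\|\ev\|_{\bfK(\xs)}^2$ in terms of $\|\ex\|_{\bfK(\xs)}^2$ and the defect norm.

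To that end I would multiply the first equation of \eqref{eq: error equations - uncoupled} by $\ev^T$. The defect term $-\ev^T\bfM(\xs)\dv$ is bounded by $\|\ev\|_{\bfK(\xs)}\|\dv\|_{\star,\xs}$ directly from the definition of the dual norm in Section~\ref{subsection: norms}. The matrix-difference terms $\ev^T(\bfK(\bfx)-\bfK(\xs))\vs$ and $\ev^T(\bfK(\bfx)-\bfK(\xs))\ev$ are rewritten via Lemma~\ref{lemma: matrix differences} with $\bfy=\xs$, $\bfe=\ex$ as integrals over intermediate surfaces $\Gamma_h^\theta$; Lemma~\ref{lemma:theta-independence} then transfers $L^p$ norms back to $\Gamma_h(\xs)$. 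The difference $\ev^T(\bfg(\bfx)-\bfg(\xs))$ is treated analogously by writing it as $\int_0^1 \frac{\d}{\d\theta}\int_{\Gamma_h^\theta}g\,\nu_{\Gamma_h^\theta}\cdot\psi_h^\theta\,\d\theta$ and using the Leibniz formula together with Lemma~\ref{lemma: mat normal vector} to control the material derivative of $\nu_{\Gamma_h^\theta}$ by $\|\nabla_{\Gamma_h(\xs)}e_{x,h}\|_{L^p}$. Under the smallness assumption $\|\nabla_{\Gamma_h(\xs)}e_{x,h}\|_{L^\infty}\le \tfrac12$, each of these terms is bounded either by $C\|\ev\|_{\bfK(\xs)}\|\ex\|_{\bfK(\xs)}$ or by $C\|\nabla_{\Gamma_h(\xs)}e_{x,h}\|_{L^\infty}\|\ev\|_{\bfK(\xs)}^2$. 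Young's inequality and absorption into the left-hand side then give
\begin{equation*}
\|\ev\|_{\bfK(\xs)}^2 \,\le\, C\|\ex\|_{\bfK(\xs)}^2 + C\|\dv\|_{\star,\xs}^2.
\end{equation*}
Substituting into the differential inequality above and applying Gronwall yield \eqref{ex}, and plugging \eqref{ex} back into the $\ev$-estimate yields \eqref{ev}.

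The main obstacle, as is typical for nonlinear stability arguments of this type, is justifying the smallness hypothesis $\|\nabla_{\Gamma_h(\xs)}e_{x,h}\|_{L^\infty}\le\tfrac12$ that the estimates above rely on. I would handle this by a standard continuation argument: define $t^\ast\in(0,T]$ as the supremum of times on which the bound is valid. On $[0,t^\ast]$ the estimates just derived give $\|\ex(t)\|_{\bfK(\xs(t))}\le C h^\kappa$; the inverse estimate $\|w_h\|_{L^\infty(\Gamma_h(\xs))}\le Ch^{-1}\|w_h\|_{L^2(\Gamma_h(\xs))}$ on the quasi-uniform surface triangulation then gives $\|\nabla_{\Gamma_h(\xs)}e_{x,h}\|_{L^\infty}\le Ch^{-1}\|\ex\|_{\bfK(\xs)}\le Ch^{\kappa-1}$, which is strictly smaller than $\tfrac12$ for all $h\le h_0$ because $\kappa>1$. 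Hence $t^\ast=T$, the smallness hypothesis holds throughout $[0,T]$, and the proof is complete.
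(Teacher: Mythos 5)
Your proof follows essentially the same route as the paper's: test the error equation with $\ev$, bound each term via Lemmas~\ref{lemma: matrix differences}--\ref{lemma: mat normal vector} and the dual-norm estimate for the defect, combine with a Gronwall estimate on $\diff\|\ex\|_{\bfK(\xs)}^2$ (using Lemma~\ref{lemma: matrix derivatives} and $\dotex=\ev$), and close the $W^{1,\infty}$ smallness hypothesis by a continuation argument with the inverse estimate, which is where $\kappa>1$ enters. The one imprecision is the fixed threshold $\tfrac12$: it suffices for the hypotheses of Lemmas~\ref{lemma:cond-equiv}--\ref{lemma: mat normal vector} (cf.\ \eqref{e-inf-bound}), but is not in general small enough to absorb the quadratic term $C\|\nabla_{\Gamma_h(\xs)}e_{x,h}\|_{L^\infty}\|\ev\|_{\bfK(\xs)}^2$ into the left-hand side when $C$ is a generic constant; the paper instead posits the $h$-dependent bound $h^{(\kappa-1)/2}$ in its working hypothesis \eqref{eq: assumed bounds}, so that this factor tends to zero and absorption holds for all $h\le h_0$ without tracking constants — your argument closes identically once you shrink the threshold accordingly (the bootstrap output $Ch^{\kappa-1}$ is as small as needed).
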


We note that the error functions $e_v(\cdot,t),e_x(\cdot,t)\in S_h(\xs(t))^3$ with nodal vectors $\ev(t)$ and $\ex(t)$, respectively,
are then bounded by
$$
    \begin{aligned}
        \|e_v(\cdot,t)\|_{H^1(\Gamma_h(\bfx^*(t)))} \leq  Ch^\kappa \andquad
        \|e_x(\cdot,t)\|_{H^1(\Gamma_h(\bfx^*(t)))} \leq  Ch^\kappa ,
    \end{aligned}
    \quad t\in[0,T] .
$$

\begin{proof} \bcl The proof uses energy estimates for the error equations  \eqref{eq: error equations - uncoupled} in the matrix-vector formulation, and it
relies on the results of Section \ref{section: preliminaries}. \ecl In the course of this proof $c$ and $C$ will be generic constants that take on different values on different occurrences.

In view of condition \eqref{e-inf-bound} for $\bfy=\xs(t)$, we will need to control the $W^{1,\infty}$ norm of the position error $e_x(\cdot,t)$. Let $0<t^*\le T$ be the maximal time such that
\begin{equation}
\label{eq: assumed bounds}
   % \begin{aligned}
        \|\nabla_{\Ga_h(\xs(t))}  e_x(\cdot,t)\|_{L^\infty(\Ga_h(\xs(t)))} \leq  h^{(\kappa-1)/2}
        \\
%        \|D_{\Ga_h}  e_x(\cdot,t)\|_{L^\infty(\Ga_h(\xs(t)))} \leq &\ h^{(\kappa-1)/2}
%    \end{aligned}
     \qquad \textrm{ for } \quad t\in[0,t^*].
\end{equation}
At $t=t^*$  either this inequality  becomes an equality, or else we have $t^*=T$.

We will first  prove the stated error bounds for $0\leq t \leq t^*$.
Then the proof will be finished by showing that in fact $t^*$ coincides with $T$.

\smallskip
By testing the first equation in \eqref{eq: error equations - uncoupled} with $\ev$, and dropping the omnipresent argument $t \in [0,t^*]$, we obtain:
\begin{align*}
    \normK{\ev}^2 = \ev^T \bfK(\xs) \ev =&\ -\ev^T\big(\bfK(\bfx)-\bfK(\xs)\big)\vs\\
    &\ -\ev^T \big(\bfK(\bfx)-\bfK(\xs)\big) \ev \\
    &\ + \ev^T \big(\bfg(\bfx) - \bfg(\xs)\big) - \ev^T \bfM(\xs) \dv .
\end{align*}
We separately estimate the four terms on the right-hand side in an appropriate way, with Lemmas~\ref{lemma: matrix differences} -- \ref{lemma: normal vector perturbation} as our main tools.

(i) We denote, for $0\le  \theta\le 1$, by $e_v^\theta$ and $v_h^{*,\theta}$ the finite element functions in $S_h(\Gamma_h^\theta)^3$ for $\Gamma_h^\theta=\Gamma_h(\xs+\theta\ex)$ with nodal vectors $\ev$ and $\bfv^*$, respectively. Lemma~\ref{lemma: matrix differences} then gives us
\begin{align*}
    & \ev^T\big(\bfK(\bfx)-\bfK(\xs)\big)\vs \\
    &= \int_0^1 \int_{\Gamma_h^\theta} e_v^\theta \cdot \bigl( \nabla_{\Gamma_h^\theta}\cdot e_x^\theta \bigr) v_h^{*,\theta}\, \d\theta
    +
    \alpha \int_0^1 \int_{\Gamma_h^\theta} \nabla_{\Gamma_h^\theta} e_v^\theta \cdot \bigl( D_{\Gamma_h^\theta} e_x^\theta \bigr) \nabla_{\Gamma_h^\theta}v_h^{*,\theta}\, \d\theta.
\end{align*}
Using the Cauchy--Schwarz inequality, we estimate the integral with the product of the $L^2-L^2-L^\infty$ norms of the three factors. We thus have
\begin{align*}
    \ev^T\bigl(\bfK(\bfx)-\bfK(\xs)\bigr)\vs
    &\le \int_0^1 \| e_v^\theta \|_{L^2(\Gamma_h^\theta)} \,
    \| \nabla_{\Gamma_h^\theta}\cdot e_x^\theta \|_{L^2(\Gamma_h^\theta)} \,
    \| v_h^{*,\theta} \|_{L^{\infty}(\Gamma_h^\theta)}\, \d\theta
    \\ & \quad + \alpha
    \int_0^1 \| \nabla_{\Gamma_h^\theta} e_v^\theta \|_{L^2(\Gamma_h^\theta) }\, \| D_{\Gamma_h^\theta} e_x^\theta \|_{L^2(\Gamma_h^\theta)} \,
    \| \nabla_{\Gamma_h^\theta}v_h^{*,\theta} \|_{L^{\infty}(\Gamma_h^\theta)}\, \d\theta
    \\
    & \le c \int_0^1 \| e_v^\theta \|_{H^1(\Gamma_h^\theta)} \, \| e_x^\theta \|_{H^1(\Gamma_h^\theta)} \,
    \| v_h^{*,\theta} \|_{W^{1,\infty}(\Gamma_h^\theta)}\, \d\theta.
\end{align*}
By \eqref{eq: assumed bounds} and Lemma~\ref{lemma:theta-independence}, this is bounded by
\begin{align*}
    \ev^T\big(\bfK(\bfx)-\bfK(\xs)\big)\vs \le
%    &\ \redon c \| e_v^0 \|_{H^1(\Gamma_h^0)} \, \| e_x^0 \|_{H^1(\Gamma_h^0)} \,
%    \| v_h^0 \|_{W^{1,\infty}(\Gamma_h^0)} \redoff \\
% the line above is more unified, since that is the form we use usually use later on
    &\ c \| e_v \|_{H^1(\Gamma_h(\xs))} \, \| e_x\|_{H^1(\Gamma_h(\xs))} \,
    \| v_h^* \|_{W^{1,\infty}(\Gamma_h(\xs))} ,
\end{align*}
where the last factor is bounded independently of $h$.
By the Young inequality, we thus obtain
\begin{align*}
    \ev^T \big(\bfK(\bfx)-\bfK(\xs)\big)\vs
    \leq & \ \tfrac{1}{6}\| e_v \|_{H^1(\Gamma_h(\xs))}^2 +
    C \| e_x \|_{H^1(\Gamma_h(\xs))}^2
    \\
    = &\  \tfrac{1}{6} \normK{\ev}^2 + C \normK{\ex}^2 .
\end{align*}

(ii) Similarly, estimating the three factors in the integrals by $L^2-L^\infty-L^2$, we obtain
\begin{align*}
    \ev^T \big(\bfK(\bfx)-\bfK(\xs)\big)\ev
%    = &\ \int_0^1 \int_{\Ga_h^\theta}
%    \dot e_h^\theta \cdot (\nb_{\Ga_h^\theta} \cdot e_h^\theta) \dot e_h^\theta \; \d\theta \\
%    &\ + \alpha \int_0^1 \int_{\Ga_h^\theta} \nb_{\Ga_h^\theta} \dot e_h^\theta \cdot D_{\Ga_h^\theta} e_h^\theta \, \nb_{\Ga_h^\theta} \dot e_h^\theta\; \d\theta \\
    \leq &\ c \|e_\bfv\|_{L^2(\Ga_h(\xs))}^2 \|\nbgh \cdot e_\bfx\|_{L^\infty(\Ga_h(\xs))} \\
    &\ + c \alpha \|\nbgh e_\bfv\|_{L^2(\Ga_h(\xs))}^2 \|D_{\Ga_h} e_\bfx\|_{L^\infty(\Ga_h(\xs))} \\
    \leq &\  c h^{(\kappa-1)/2} \normK{\ev}^2,
\end{align*}
where in the last inequality we used the  bound \eqref{eq: assumed bounds}.

(iii) In the following bound we  use Lemma~\ref{lemma: mat normal vector}.
Again with the finite element function $e_v^\theta= \sum_{j=1}^N (\ev)_j \phi_j[\xs+\theta\ex]$ on the surface $\Gamma_h^\theta=\Gamma_h(\xs+\theta\ex)$, for~$0\le\theta\le 1$, we write
\begin{align*}
    &\ \ev^T \big(\bfg(\bfx) - \bfg(\xs)\big)  = \int_{\Ga_h^1} g \nu_{\Ga_h^1} \cdot e_{v}^1 - \int_{\Ga_h^0} g \nu_{\Ga_h^0} \cdot e_{v}^0
 = \int_0^1 \frac\d{\d\theta}  \int_{\Ga_h^\theta} g \nu_{\Ga_h^\theta} \cdot e_{v}^\theta \d\theta.
\end{align*}
Using the Leibniz formula, this becomes
$$
 \ev^T \big(\bfg(\bfx) - \bfg(\xs)\big) =
 \int_0^1  \int_{\Ga_h^\theta} \Bigl(\mat_\theta\bigl( g \nu_{\Ga_h^\theta}  \cdot e_{v}^\theta \bigr) +
 (g \nu_{\Ga_h^\theta}  \cdot e_{v}^\theta ) (\nabla_{\Ga_h^\theta}\cdot e_x^\theta) \Bigr)\, \d\theta.
 $$
Here we have, noting that $\mat_\theta e_{v}^\theta=0$,
$$
    \mat_\theta\bigl( g \nu_{\Ga_h^\theta}  \cdot e_{v}^\theta \bigr) =
    g' e_x^\theta \,\nu_{\Ga_h^\theta}  \cdot e_{v}^\theta +
    g \,\mat_\theta \nu_{\Ga_h^\theta}  \cdot e_{v}^\theta.
$$
With Lemmas~\ref{lemma:theta-independence} and \ref{lemma: mat normal vector} we therefore obtain via the Cauchy-Schwarz inequality
\begin{align*}
    \int_{\Ga_h^\theta} \mat_\theta\bigl( g \nu_{\Ga_h^\theta}  \cdot e_{v}^\theta \bigr)  \leq &\ 
    \bcl c_2^2 \,\ecl \| g' \|_{L^\infty} \, \| e_x \|_{L^2(\Ga_h(\xs))} \|  e_v \|_{L^2(\Ga_h(\xs))} \\
    &\ + \bcl c_2^2 \,\ecl \| g \|_{L^\infty} \, \|  \nabla_{\Ga_h(\xs)} e_x \|_{L^2(\Ga_h(\xs))} \, \|  e_v \|_{L^2(\Ga_h(\xs))} ,
\end{align*}
and again with Lemma~\ref{lemma:theta-independence},
$$
    \int_{\Ga_h^\theta} (g \nu_{\Ga_h^\theta}  \cdot e_{v}^\theta ) (\nabla_{\Ga_h^\theta}\cdot e_x^\theta) \le \bcl c_2^2 \,\ecl
    \| g \|_{L^\infty} \, \|  e_v \|_{L^2(\Ga_h(\xs))}  \, \|  \nabla_{\Ga_h(\xs)} \cdot e_x \|_{L^2(\Ga_h(\xs))}.
$$
In total, we obtain a bound of the same type as for the terms in (i) and (ii):
\begin{align*}
\ev^T \big(\bfg(\bfx) - \bfg(\xs)\big) &\le c  \|   e_x \|_{H^1(\Ga_h(\xs))} \, \|  e_v \|_{L^2(\Ga_h(\xs))}
\\
&= c \normK{\ex} \, \normM{\ev} \le  \tfrac{1}{6} \normK{\ev}^2 + C \normK{\ex}^2 .
\end{align*}

The combination of the estimates of the three terms (i)--(iii) with absorptions (for sufficiently small $h\leq h_0$), and a simple dual norm estimate, based on \eqref{eq: star and H^-1 norm equality},  for the defect term, yield the bound
\begin{equation}
\label{eq: velocity stability bound}
    \begin{aligned}
        \normK{\ev}^2
%        \leq &\ c \normK{\ex}^2  + |\ev^T \big(\bfg(\bfx) - \bfg(\xs)\big)| + c \|\dv\|_{\star,\xs}^2 \\
        \leq &\ c \normK{\ex}^2 + c \|\dv\|_{\star,\xs}^2.  \\
    \end{aligned}
\end{equation}
Using this estimate, together with taking the $\normK{\cdot}$ norm of both sides of the second equation in \eqref{eq: error equations - uncoupled}, we obtain
\begin{equation}
\label{eq: pre-final energy bound}
%  \begin{aligned}
        \bcl \normK{\dotex}^2 = \ \normK{\ev}^2  
        \leq \ c \normK{\ex}^2 + c \|\dv\|_{\star,\xs}^2  .
%    \end{aligned}
\end{equation}

In order to apply Gronwall's inequality, we connect $\displaystyle\diff \!\normK{\ex}^2$ and $\normK{\dotex}^2$ as follows:
\begin{align*}
    \half \diff \normK{\ex}^2 =&\ \ex^T \bfK(\xs) \dotex + \half \ex^T \Big(\diff \bfK(\xs)\Big) \ex \\
    \leq &\ \normK{\dotex}^2 + c \normK{\ex}^2,
\end{align*}
where we use the Cauchy-Schwarz inequality and Lemma~\ref{lemma: matrix derivatives} in the estimate.
Inserting \eqref{eq: pre-final energy bound}, we obtain
\begin{equation*}
    \half \diff \normK{\ex}^2 \leq \bcl c \normK{\ex}^2 + c \|\dv\|_{\star,\xs}^2  . \ecl
\end{equation*}
A Gronwall inequality then yields \eqref{ex},
%\begin{equation*}
%    \|\ex(t)\|_{\bfK(\xs(t))}^2 \leq c \int_0^t \|\dv(s)\|_{\star,\xs}^2 + \|\dx(s)\|_{\bfK(\xs(s))}^2 \d s, \qquad t \in [0,t^*],
%\end{equation*}
using $e_j(0) = x_j(0) - x_j^0 = 0$ for $j=1,\dotsc,N$.
Inserting this estimate in \eqref{eq: velocity stability bound},
we can bound $\ev(t)$  for $0\le t \le t^*$ by \eqref{ev}.
%$$%\begin{align*}
%    \|\ev(t)\|_{\bfK(\xs(t))}^2 \le c \|\dv(t)\|_{\star,\xs}^2
%    + c \int_0^t \! \|\dv(s)\|_{\star,\xs}^2 + \|\dx(s)\|_{\bfK(\xs(s))}^2 \d s
%$$%\end{align*}
%which, together with the assumed defect estimates, proves the $O(h^k)$ bounds \eqref{eq: stability estimates - O(h^k)} over the time interval $[0,t^*]$.

\smallskip
Now it only remains to show that $t^*=T$ for $h$ sufficiently small. For $0\leq t \leq t^*$ we use an inverse inequality and \eqref{ex} to bound the left-hand side in \eqref{eq: assumed bounds}:
\begin{align*}
    \|\nabla_{\Ga_h(\xs(t))}  e_x(\cdot,t)\|_{L^\infty(\Ga_h(\xs(t)))}    \leq &\ c h\inv \|\nabla_{\Ga_h(\xs(t))}  e_x(\cdot,t)\|_{L^2(\Ga_h(\xs(t)))} \\
    \leq &\ c h\inv \|\ex(t)\|_{\bfK(\xs(t))}
    \leq  c C h^{\kappa-1} \leq \tfrac12 h^{(\kappa-1)/2}
\end{align*}
for sufficiently small $h$.  Hence, we can extend the bound \eqref{eq: assumed bounds} beyond $t^*$, which contradicts the maximality of $t^*$ unless we have already $t^*=T$.
\QED \end{proof}

\section{Stability of coupling surface PDEs to surface motion}
\label{section: coupled problem}

Now we turn to the stability bounds of the original problem \eqref{uh-vh-equation}--\eqref{xh}, or in DAE form \eqref{eq: DAE form - coupled}, which is the formulation we will actually use for the stability analysis.

\subsection{Error equations}

Similarly as before, in order to \bcl derive \ecl stability estimates we consider the DAE system when we insert the nodal values  $\us\bcl(t)\ecl\in\R^{N}$  of the exact solution $u\bcl(\cdot,t)\ecl$, the nodal values $\xs\bcl(t)\ecl\in\R^{3N}$ of the exact positions $X\bcl(\cdot,t)\ecl$, and the  nodal values $\vs\bcl(t)\ecl\in\R^{3N}$ of the exact velocity  $v\bcl(\cdot,t)\ecl$. Inserting them into \eqref{eq: DAE form - coupled} yields defects \bcl $\du\bcl(t)\in \R^N\ecl$ and $\dv\bcl(t)\in \R^{3N}\ecl$: omitting the argument $t$ in the notation, we have\ecl
\begin{equation}
\label{eq: DAE form - coupled - solution}
    \begin{aligned}
        \diff \Big(\bfM(\xs)\us\Big) + \bfA(\xs)\us =&\ \bff(\xs,\us) + \bfM(\xs)\du, \\
        \bfK(\xs)\vs =&\ \bfg(\xs,\us) + \bfM^{[3]}(\xs)\dv, \\
        \dotxs =&\ \bcl \vs , \ecl
    \end{aligned}
\end{equation}
\bcl
where again $\bfM^{[3]}(\xs) = I_3 \otimes \bfM(\xs)$.
As no confusion can arise, we write again $\bfM(\xs)$ for $\bfM^{[3]}(\xs)$. \ecl

We denote the PDE error by $\eu=\bfu-\us$, and \bcl as in the previous section, \ecl $\ev=\bfv-\vs$ and $\ex=\bfx-\xs$ denote the velocity error and surface error, respectively. Subtracting \eqref{eq: DAE form - coupled - solution} from \eqref{eq: DAE form - coupled}, we obtain the following error equation:
\begin{equation}
\label{eq: error equations - coupled}
    \begin{aligned}
        \diff \Big(\bfM(\xs)\eu\Big) + \bfA(\xs)\eu =&\
        - \diff \Big(\big( \bfM(\bfx)-\bfM(\xs) \big) \us \Big)\\
        &\ - \diff \Big( \big( \bfM(\bfx)-\bfM(\xs) \big) \eu\Big) \\
        &\ - \big( \bfA(\bfx)-\bfA(\xs) \big) \us \\
        &\ - \big( \bfA(\bfx)-\bfA(\xs) \big) \eu \\
        &\ + \big(\bff(\bfx,\bfu) - \bff(\xs,\us)\big) - \bfM(\xs)\du , \\
        \bfK(\xs) \ev =&\ -\big(\bfK(\bfx)-\bfK(\xs)\big)\vs - \big(\bfK(\bfx)-\bfK(\xs)\big) \ev \\
        &\ + \big(\bfg(\bfx,\bfu) - \bfg(\xs,\us)\big) - \bfM(\xs)\dv, \\
        \dotex =&\ \bcl \ev \ecl .
    \end{aligned}
\end{equation}

\subsection{Stability estimate}
\label{subsection: stability estimate - PDE}
We now formulate the stability result for the errors $\eu$, $\ev$ and $\ex$ of the surface motion coupled to the surface PDE. \bcl Here, we use the norms \eqref{M-L2}-\eqref{A-H1} and those of Section~\ref{subsection: norms}. \ecl
%Again, we use the analogous assumptions on the defects (shown later in Section~\ref{section: defect bounds}).
\bcl
\begin{proposition}
\label{proposition: stability - coupled problem}
    Assume that the following bounds hold for the defects, for some $\kappa>1$: 
    \begin{equation*}
      %  \begin{aligned}
            \|\du(t)\|_{\star,\xs(t)} \leq  c h^\kappa , \quad\ 
            \|\dv(t)\|_{\star,\xs(t)} \leq  c h^\kappa , 
      %      \|\dx(t)\|_{\bfK(\xs(t))} \leq &\ c h^\kappa ,
    %    \end{aligned}
        \qquad \hbox{for }\  t\in[0,T] .
    \end{equation*}
    Then there exists  $h_0>0$ such that the following stability estimate holds for all $h\leq h_0$ and $0\le t \le T$:
    \begin{align}
%    \begin{aligned}
\nonumber
        \normM{\eu(t)}^2 + &\ \int_0^t \normA{\eu(s)}^2 \d s + \normK{\ex(t)}^2 +\int_0^t \normK{\ev(s)}^2 \d s
        \\  
        \label{eq: stability estimate}
        \leq &\ C \int_0^t \Bigl(\|\du(s)\|_{\star,\xs}^2 + \|\dv(s)\|_{\star,\xs}^2  \Bigr)\d s .
%    \end{aligned}
  \end{align}
%
%
%
%       & \normM{\eu(t)}^2 + \ \int_0^t \normA{\eu(s)}^2 \d s
%       \\
%       \nonumber
%          &\qquad\qquad\quad\ \le C \int_0^t \Bigl(\|\du(s)\|_{\star,\xs}^2 + \|\dv(s)\|_{\star,\xs}^2 \Bigr) \d s ,
%\\
%\label{eq: stability estimate - v} ??????????????????
%&    \|\ev(t)\|_{\bfK(\bfx^*)}^2 \leq C\, \|\dv(t)\|_{\star,\xs}^2 ,%+ C \int_0^t \! \normK{\dx(s)}^2 \d s  ,
%\\
%\label{eq: stability estimate - x}
% &   \|\ex(t)\|_{\bfK(\bfx^*)}^2 \leq C \int_0^t %\Bigl(\|\du(s)\|_{\star,\xs}^2 + 
% \|\dv(s)\|_{\star,\xs}^2 
% %+ \normK{\dx(s)}^2
% %\Bigr) 
% \,\d s   .
%\end{align}
%%    \begin{equation}
%%    \label{eq: stability estimates - u - O(h^k)}
%%        \begin{aligned}
%%            \normM{\eu(t)}^2 + \int_0^t \normA{\eu(s)}^2 \d s \leq &\ Ch^k , \\
%%            \|\ev(t)\|_{\bfK(\xs(t))} \leq &\ Ch^k , \\
%%            \|\ex(t)\|_{\bfK(\xs(t))} \leq &\ Ch^k .
%%        \end{aligned}
%%        \qquad \qquad t\in[0,T] .
%%    \end{equation}
    The constant $C$ is independent of $t$ and $h$, but depends on the final time $T$ and on the regularization parameter $\alpha$.
%    The constant $C>0$ is independent of $t$ and $h$, but depends on the final time $T$, the fixed regularizing parameter $\alpha$, \redon the constant $c$, \redoff and on the solution $u$, $v$ and $X$.
\end{proposition}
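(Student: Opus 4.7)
The proof will follow the same overall template as that of Proposition~\ref{proposition: stability - uncoupled problem}: work entirely in the matrix-vector formulation, use Lemmas~\ref{lemma: matrix differences}--\ref{lemma: matrix derivatives} as the only geometric input, and close a coupled energy inequality with Gronwall. The argument is run up to the maximal time $t^*\in(0,T]$ on which the bootstrap hypothesis
\begin{equation*}
\|\nabla_{\Ga_h(\xs(t))} e_x(\cdot,t)\|_{L^\infty(\Ga_h(\xs(t)))} \le h^{(\kappa-1)/2}
\end{equation*}
holds; once the final estimate is established on $[0,t^*]$, the inverse-inequality argument at the end of the previous proof (applied now to the $\normK{\ex}$-bound) vacates this assumption and forces $t^*=T$ for $h$ small enough.

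The velocity equation (second line of \eqref{eq: error equations - coupled}) is treated essentially as in the uncoupled case, after splitting
\begin{equation*}
\bfg(\bfx,\bfu)-\bfg(\xs,\us) = \bigl(\bfg(\bfx,\bfu)-\bfg(\xs,\bfu)\bigr)+\bigl(\bfg(\xs,\bfu)-\bfg(\xs,\us)\bigr).
\end{equation*}
The first summand is bounded by the very same argument as step (iii) of the previous proof using Lemmas~\ref{lemma: matrix differences}, \ref{lemma:theta-independence} and \ref{lemma: mat normal vector}; the second is bounded by the local Lipschitz continuity of $g$ in its scalar argument, which contributes a term $c\,\normM{\eu}\,\normM{\ev}$. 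Testing with $\ev$ and absorbing as before yields the punctual bound $\normK{\ev}^2 \le C(\normM{\eu}^2 + \normK{\ex}^2 + \|\dv\|_{\star,\xs}^2)$. Combining with $\dotex=\ev$ and Lemma~\ref{lemma: matrix derivatives} applied to $\diff \bfK(\xs)$ produces
\begin{equation*}
\tfrac12\diff\normK{\ex}^2 \le C\bigl(\normM{\eu}^2 + \normK{\ex}^2 + \|\dv\|_{\star,\xs}^2\bigr).
\end{equation*}

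For the parabolic equation the key algebraic observation is that the three time-derivative terms on the first two lines of \eqref{eq: error equations - coupled} collapse:
\begin{equation*}
\bfM(\xs)\eu + (\bfM(\bfx)-\bfM(\xs))\us + (\bfM(\bfx)-\bfM(\xs))\eu = \bfM(\bfx)\bfu - \bfM(\xs)\us,
\end{equation*}
which will let me test with $\eu$ and extract the leading energy term cleanly via the identity
\begin{equation*}
\eu^T \diff\bigl(\bfM(\xs)\eu\bigr) = \tfrac12\diff\normM{\eu}^2 + \tfrac12\eu^T\dot\bfM(\xs)\eu,
\end{equation*}
the second term being harmless by Lemma~\ref{lemma: matrix derivatives}. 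The cross terms $\eu^T\diff((\bfM(\bfx)-\bfM(\xs))\us)$ and $\eu^T\diff((\bfM(\bfx)-\bfM(\xs))\eu)$ will be handled by differentiating the integral representation of Lemma~\ref{lemma: matrix differences}: the $t$-derivative either acts on the surface (producing $\ev=\dotex$, controllable by the $\ev$-bound just obtained) or on the nodal data (producing $\dotus$-terms bounded by the smoothness of the exact solution), while the trial/test vectors are time-independent inside the Lemma~\ref{lemma: matrix differences} integral. The $\bfA$-difference terms and the $\bff$-difference term follow the $L^2$/$L^2$/$L^\infty$ pattern of steps~(i)--(iii) of the previous proof (with the same Lipschitz split in $\bff$ as we used for $\bfg$), and the defect term is bounded via the dual norm \eqref{eq: star and H^-1 norm equality}. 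Adding the $\eu$-estimate to the $\ex$-estimate and integrating in time yields a single coupled inequality
\begin{equation*}
\normM{\eu(t)}^2 + \normK{\ex(t)}^2 + \int_0^t\!\bigl(\normA{\eu}^2 + \normK{\ev}^2\bigr)\d s \le C\!\int_0^t\!\bigl(\normM{\eu}^2 + \normK{\ex}^2\bigr)\d s + C\!\int_0^t\!\bigl(\|\du\|_{\star,\xs}^2 + \|\dv\|_{\star,\xs}^2\bigr)\d s,
\end{equation*}
using $\eu(0)=0$ and $\ex(0)=0$, from which \eqref{eq: stability estimate} follows by Gronwall.

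The main technical obstacle is the term $\eu^T\diff((\bfM(\bfx)-\bfM(\xs))\us)$. Direct product-rule differentiation exposes $\dot\bfM(\bfx)-\dot\bfM(\xs)$, which intertwines the unknown nodal velocity $\bfv=\vs+\ev$ with $\eu$ itself; the alternative of integrating by parts in time throws a $\doteu$-term that must be substituted out via the equation. Either route works, but this is where the coupling of all three error components is most delicate, and where the bootstrap hypothesis and the polynomial degree $k\ge 2$ (through $\kappa>1$) are indispensable to absorb the $L^\infty$-weighted factors coming from Lemmas~\ref{lemma:theta-independence} and \ref{lemma: normal vector perturbation}.
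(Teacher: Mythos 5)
Your blueprint is structurally the same as the paper's: test with $\eu$ and $\ev$, bound each term via Lemmas~\ref{lemma: matrix differences}--\ref{lemma: matrix derivatives}, handle the $\diff(\bfM(\bfx)-\bfM(\xs))\us$ term by a product rule that separates surface- and nodal-data derivatives, and close with Gronwall on a bootstrap interval. But there are two concrete gaps. First, you run the bootstrap only with the bound $\|\nabla_{\Ga_h(\xs)}e_x\|_{L^\infty}\le h^{(\kappa-1)/2}$, whereas the paper additionally bootstraps $\|e_u\|_{L^\infty(\Ga_h(\xs))}\le 1$. That second condition is not cosmetic: $f$ and $g$ are only \emph{locally} Lipschitz, so to bound $\|\partial_i f(u_h^\theta,\nabla_{\Ga_h^\theta}u_h^\theta)\|_{L^\infty}$ (and the analogue for $g$) one must know that $u_h^\theta=u_h^{*,\theta}+\theta e_u^\theta$ stays in a fixed compact set; it is also what turns the triple product $\normM{\eu}(\normK{\ev}+\normK{\ex})\|e_u\|_{L^\infty}$ arising from $\eu^T\diff\bigl((\bfM(\bfx)-\bfM(\xs))\eu\bigr)$ into a quadratic quantity that Gronwall can handle. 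Without it, those terms do not close. You also need the inverse-inequality argument at the end to verify \emph{both} bootstrap bounds can be propagated past $t^*$, which works because $\normM{\eu(t)}\le Ch^\kappa$ together with $\|e_u\|_{L^\infty}\le ch^{-1}\normM{\eu}\le Ch^{\kappa-1}\le 1$ for small $h$ (again using $\kappa>1$).

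Second, your punctual velocity bound $\normK{\ev}^2\le C(\normM{\eu}^2+\normK{\ex}^2+\|\dv\|_{\star,\xs}^2)$ omits the $\normA{\eu}^2$ term, which is present because $g$ depends on $\nabla_{\Ga}u$: the Lipschitz estimate for $\bfg(\xs,\bfu)-\bfg(\xs,\us)$ produces $\normM{\ev}(\normM{\eu}+\normA{\eu}+\dotsb)$, not $\normM{\ev}\normM{\eu}$. The correct bound is $\normK{\ev}^2\le C(\normK{\ex}^2+\normM{\eu}^2+\normA{\eu}^2+\|\dv\|_{\star,\xs}^2)$, and this changes the combination step: when you feed $\normK{\ev}^2$ back into the PDE energy estimate through the $\rho\normK{\ev}^2$ term, you get a contribution $\rho C\normA{\eu}^2$ that must be absorbed by the $\tfrac12\normA{\eu}^2$ already on the left. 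The paper makes this explicit by choosing $\rho$ small enough, and only then derives $\int_0^t\normK{\ev}^2\,\d s$ in a second pass by re-inserting the Gronwall-derived bound on $\normM{\eu},\normA{\eu},\normK{\ex}$ into the (corrected) $\ev$-estimate. Your single combined inequality is fine in form, but you cannot reach it without first performing this absorption, and the paper's Remark after the proposition is precisely the observation that if $g$ were independent of $\nabla u$ then $\normA{\eu}^2$ would be absent, giving a pointwise-in-time $\ev$-bound -- the situation you implicitly assumed.
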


We note that the error functions $e_u(\cdot,t)\in S_h(\xs(t))$ and $e_v(\cdot,t),e_x(\cdot,t)\in S_h(\xs(t))^3$ with nodal vectors $\eu(t)$ and $\ev(t),\ex(t)$, respectively,
are then bounded by
\begin{align} 
      \nonumber
      &  \|e_u(\cdot,t)\|_{L^2(\Gamma_h(\bfx^*(t)))} + \biggl( \int_0^t  \|e_u(\cdot,t)\|_{H^1(\Gamma_h(\bfx^*(t)))}^2 \, \d s \biggr)^{1/2} \leq  Ch^\kappa, \\
      \label{err-cont-norms}
      &  \biggl( \int_0^t \|e_v(\cdot,t) \|_{H^1(\Gamma_h(\bfx^*(t)))^3}^2  \d s \biggr)^{1/2}\leq  Ch^\kappa , \\
      \nonumber
     &   \|e_x(\cdot,t)\|_{H^1(\Gamma_h(\bfx^*(t)))^3} \leq  Ch^\kappa , 
     \qquad\qquad  \qquad\qquad\qquad   t\in[0,T] . 
\end{align}
\ecl

\begin{proof}
The proof is an extension of the proof of Proposition~\ref{proposition: stability - uncoupled problem}, again based on the matrix-vector formulation and  the auxiliary results of Section \ref{section: preliminaries}.
We handle the surface PDE and the surface equations separately: we first estimate the errors \bcl of \ecl the PDE, while those for the surface equation are based on Section~\ref{section: uncoupled problem}. Finally we will combine the results to obtain the stability estimates for the coupled problem.
\bcl
In the course of this proof $c$ and $C$ will be generic constants that take on different values on different occurrences.
\ecl

Let $0<t^*\leq T$ be the maximal time such that the following inequalities hold:
\begin{equation}
\label{eq: assumed bounds - 2}
    \begin{aligned}
        \|\nabla_{\Ga_h(\xs(t))}\bcl e_x(\cdot,t)\ecl \|_{L^\infty(\Ga_h(\xs(t)))} \leq &\ h^{(\kappa-1)/2} , \\
       % \|D_{\Ga_h} e_\bfx(t)\|_{L^\infty(\Ga_h(\xs(t)))} \leq &\ h^{(\kappa-1)/2} , \\
        \|\bcl e_u(\cdot,t) \ecl\|_{L^\infty(\Ga_h(\xs(t)))} \leq &\ 1,
    \end{aligned} \qquad \textrm{ for } \quad t\in[0,t^*].
\end{equation}
Note that $t^*>0$ since initially both \bcl $e_x(\cdot,0)=0$ and $e_u(\cdot,0)=0$. \ecl

We first prove the  stated error bounds for $0\leq t \leq t^*$.
At the end, the proof will be finished by showing that in fact $t^*$ coincides with $T$.

\smallskip
%%%%%%%%%%%%%%%%%%%%%%%%%%%%%%%%%%%%%%%%%%%%%%
% Local macro for the test function!
\newcommand{\test}{\eu}
%%%%%%%%%%%%%%%%%%%%%%%%%%%%%%%%%%%%%%%%%%%%%%
Testing the first two equations of \eqref{eq: error equations - coupled} with $\test$ and $\ev$, and dropping the omnipresent argument $t \in [0,t^*]$, we obtain:
\begin{equation*}
    \begin{aligned}
        \! \! \test^T \! \diff \! \Big(\! \bfM(\xs)\eu\! \Big) + \test^T \bfA(\xs)\eu =&
        - \test^T \diff \Big( \big( \bfM(\bfx)-\bfM(\xs) \big) \us \Big) \\
        & - \test^T \diff \Big( \big( \bfM(\bfx)-\bfM(\xs) \big) \eu\Big) \\
        & - \test^T \big( \bfA(\bfx)-\bfA(\xs) \big) \us \\
        & - \test^T \big( \bfA(\bfx)-\bfA(\xs) \big) \eu \\
        & + \test^T \big(\bff(\bfx,\bfu) - \bff(\xs,\us)\big) - \test^T \bfM(\xs)\du , \\
        \normK{\ev}^2  =& - \ev^T \big(\bfK(\bfx)-\bfK(\xs)\big)\vs \! - \ev^T \big(\bfK(\bfx)-\bfK(\xs)\big) \ev \\
        & + \ev^T \big(\bfg(\bfx,\bfu) - \bfg(\xs,\us)\big) - \ev^T \bfM(\xs)\dv , \\
        \dotex =& \ev  .
    \end{aligned}
\end{equation*}
(A) {\it Estimates for the surface PDE:}
We estimate the terms  separately, \bcl with Lemmas~\ref{lemma: matrix differences} -- \ref{lemma:theta-independence} \ecl as our main tools.

(i) The symmetry of $\bfM(\xs)$ and a simple calculation yield
\begin{align*}
    \test^T \diff \Big(\bfM(\xs)\eu\Big) = &\ \half \diff \Big(\test^T \bfM(\xs)\eu\Big) + \half \test^T \Big(\diff \bfM(\xs)\Big)\eu \\
    =&\ \half \diff \normM{\test}^2 + \half \test^T \Big(\diff \bfM(\xs)\Big)\eu ,
\end{align*}
where the last term is bounded by Lemma~\ref{lemma: matrix derivatives} as
$$
\Bigl|\eu^T \diff \bfM(\xs)\eu\Bigr| \le c \,\normM{\eu}^2.
$$

(ii) By the definition of the $\bfA$-norm we have
\begin{align*}
    \test^T \bfA(\xs)\eu = \normA{\eu}^2 .
\end{align*}

\bcl
(iii) With the product rule we write
\begin{align}\nonumber
&\test^T \diff \Big( \big( \bfM(\bfx)-\bfM(\xs) \big) \us \Big)
\\
\label{term-iii}
&\qquad = \test^T  \big( \bfM(\bfx)-\bfM(\xs) \big) \dot\bfu^*  +
\test^T \Big( \diff \big( \bfM(\bfx)-\bfM(\xs) \big) \Big) \bfu^*.
\end{align}
With  $\Gamma_h^\theta(t)=\Gamma_h[\xs(t)+\theta \bfe_\bfx(t)]$ and with the finite element functions
$$
\hbox{
$e_u^\theta(\cdot,t)$, $u_h^{*,\theta}(\cdot,t)\in S_h(\xs(t)+\theta \ex(t))$ with nodal vectors $\eu(t)$, $\bfu^*(t)$, resp.,  
}
$$
Lemma~\ref{lemma: matrix differences} (with $\xs(t)$ in the role of $\bfy$) yields for the first term, omitting again the argument $t$,
$$
\test^T  \big( \bfM(\bfx)-\bfM(\xs) \big) \dot\bfu^* = \int_0^1 \int_{\Gamma_h^\theta}
e_u^\theta\, (\nabla_{\Gamma_h^\theta} \cdot e_x^\theta) \, \partial_h^\bullet u_h^{*,\theta}\, \d\theta.
$$
Using the Cauchy-Schwarz inequality we obtain %for the first term
$$
|\test^T  \big( \bfM(\bfx)-\bfM(\xs) \big) \dot\bfu^*|
\le \int_0^1 \| e_u^\theta \|_{L^2(\Gamma_h^\theta)}\, \| \nabla_{\Gamma_h^\theta} \cdot e_x^\theta \|_{L^2(\Gamma_h^\theta)}\, \| \partial_h^\bullet u_h^{*,\theta} \|_{L^\infty(\Gamma_h^\theta)}\, \d\theta.
$$
Under condition \eqref{eq: assumed bounds - 2} we obtain from
Lemmas~\ref{lemma:cond-equiv} and~\ref{lemma:theta-independence}
that for $0\le t \le t^*$,
$$%\begin{align*}
|\test^T  \big( \bfM(\bfx)-\bfM(\xs) \big) \dot\bfu^* |%&
\le  c \, \| e_u^0 \|_{L^2(\Gamma_h^0)} \,
\| e_x^0 \|_{H^1(\Gamma_h^0)} \, \| \partial_h^\bullet u_h^{*,0} \|_{L^\infty(\Gamma_h^0)}.
%\\
%&\le C  \normM{\eu}  \normK{\ex}.
%\end{align*}
$$
Now, the last factor is bounded by
$$
\| \partial_h^\bullet u_h^{*,0} \|_{L^\infty(\Gamma_h^0)} \le c \| \dot\bfu^* \|_\infty \le C
$$
because of the assumed smoothness of the exact solution $u$ and hence of its material derivative $\partial^\bullet u(\cdot,t)$, whose values at the nodes are the entries of the vector $\dot\bfu^*(t)$.
Hence we obtain, on recalling the definitions of the discrete norms,
$$%\begin{equation}\label{est-term1}
-\test^T  \big( \bfM(\bfx)-\bfM(\xs) \big) \dot\bfu^* \le C  \normM{\eu}  \normK{\ex}.
$$%\end{equation}
Using Lemma~\ref{lemma: matrix differences} together with the Leibniz formula, the last term in \eqref{term-iii} becomes
\begin{align*}
\test^T \Big( \diff \big( \bfM(\bfx)-\bfM(\xs) \big) \Big) \us &= \int_0^1 \int_{\Gamma_h^\theta}
e_u^\theta\, \partial_h^\bullet (\nabla_{\Gamma_h^\theta} \cdot e_x^\theta) \,  u_h^{*,\theta}\, \d\theta
\\
&+
 \int_0^1 \int_{\Gamma_h^\theta}
e_u^\theta\, (\nabla_{\Gamma_h^\theta} \cdot e_x^\theta) \,  u_h^{*,\theta}\, (\nabla_{\Gamma_h^\theta} \cdot v_h^\theta)\, \d\theta,
\end{align*}
where $v_h^\theta$ is the velocity of $\Gamma_h^\theta$ (as a function of $t$), which is the finite element function in
$S_h(\xs+\theta \ex)$ with nodal vector $\dot \bfx^* + \theta \dot\bfe_\bfx = \bfv^* + \theta \ev$, so that
\begin{equation}\label{vhtheta}
v_h^\theta = v_h^{*,\theta} + \theta e_v^\theta,
\end{equation}
where $v_h^{*,\theta}$ and $e_v^\theta$ are the finite element functions on $\Gamma_h^\theta$ with nodal vectors $\bfv^*$ and $\ev$, respectively.
In the first integral we further use, cf.\ \cite[Lemma~2.6]{DziukKronerMuller}, 
$$
\partial_h^\bullet (\nabla_{\Gamma_h^\theta} \cdot e_x^\theta) = \nabla_{\Gamma_h^\theta} \cdot \partial_h^\bullet e_x^\theta - \bigl((I_3-\nu_h^\theta(\nu_h^\theta)^T) \nabla_{\Gamma_h^\theta} v_h^\theta\bigr) : \nabla_{\Gamma_h^\theta} e_x^\theta ,
$$
where $:$ symbolizes the Euclidean inner product of the vectorization of two matrices. Here we note that $\partial_h^\bullet e_x^\theta$ is the finite element function on $\Gamma_h^\theta$ with nodal vector $\dot\bfe_\bfx=\ev$, so that $\partial_h^\bullet e_x^\theta=e_v^\theta$. 

We then estimate, using the Cauchy-Schwarz inequality in the first step, Lemmas~\ref{lemma:cond-equiv} and~\ref{lemma:theta-independence} in the second step (using \eqref{eq: assumed bounds - 2} to ensure the smallness condition in these lemmas), the definition of the discrete norms in the third step, and using the first bound of \eqref{eq: assumed bounds - 2}  and the boundedness of the discrete gradient of the interpolated exact velocity $\nabla_{\Ga_h(\xs)} v_h^*$ and of the interpolated exact solution $u_h^*$ in the fourth step,
\begin{align*}
&\biggl|\int_0^1 \int_{\Gamma_h^\theta}
e_u^\theta\, \partial_h^\bullet (\nabla_{\Gamma_h^\theta} \cdot e_x^\theta) \,  u_h^{*,\theta}\, \d\theta \biggr|\\
&\le \int_0^1 \int_{\Gamma_h^\theta} \|e_u^\theta\|_{L^2(\Gamma_h^\theta)}\,
\Bigl(\| \nabla_{\Gamma_h^\theta} \cdot e_v^\theta \|_{L^2(\Gamma_h^\theta)} 
\\
&\hspace{3.5cm}
+
\|  \nabla_{\Gamma_h^\theta} v_h^{*,\theta} \|_{L^\infty(\Gamma_h^\theta)}\cdot
\| \nabla_{\Gamma_h^\theta} e_x^\theta \|_{L^2(\Gamma_h^\theta)}
\\
&\hspace{3.5cm}
+\| \nabla_{\Gamma_h^\theta} e_v^\theta \|_{L^2(\Gamma_h^\theta)} \cdot
\| \nabla_{\Gamma_h^\theta} e_x^\theta \|_{L^\infty(\Gamma_h^\theta)} \Bigr)
\| u_h^{*,\theta} \|_{L^\infty(\Gamma_h^\theta)} \, \d\theta
\\
& \le c\, \|  e_u \|_{L^2(\Gamma_h(\xs))} \Bigl(
 \| \nabla_{\Gamma_h(\xs)} e_v \|_{L^2(\Gamma_h(\xs))} 
 \\
 &\hspace{3cm}
 +
 \|  \nabla_{\Gamma_h(\xs)} v_h^* \|_{L^\infty(\Gamma_h(\xs))} \cdot
 \| \nabla_{\Gamma_h(\xs)} e_x \|_{L^2(\Gamma_h(\xs))} 
 \\
 &\hspace{2cm}
 + \| \nabla_{\Gamma_h(\xs)} e_v \|_{L^2(\Gamma_h(\xs))} \cdot
  \| \nabla_{\Gamma_h(\xs)} e_x \|_{L^\infty(\Gamma_h(\xs))} 
 \Bigr)
  \|  u_h^* \|_{L^\infty(\Gamma_h(\xs))} 
\\
&\le c \,\normM{\eu}  \Bigl( \normA{\ev} + \|\nabla_{\Ga_h(\xs)} v_h^* \|_{L^\infty(\Ga_h(\xs))} \, \normA{\ex} 
\\
&\hspace{4.5cm}
+ \normA{\ev}     
\|\nabla_{\Ga_h(\xs)} e_x \|_{L^\infty(\Ga_h(\xs))} \Bigr) \| \bfu^* \|_\infty
\\
&\le c  \normM{\eu} \Bigl( \normK{\ev} + C \normK{\ex} +
 \normK{\ex} h^{(\kappa-1)/2} \Bigr) C
\\
&\le C' \normM{\eu} \Bigl( \normK{\ev} + \normK{\ex} \Bigr).
\end{align*}
With the same arguments we estimate, on inserting \eqref{vhtheta},
\begin{align*}
 &\biggl|\int_0^1 \int_{\Gamma_h^\theta}
e_u^\theta\, (\nabla_{\Gamma_h^\theta} \cdot e_x^\theta) \,  u_h^{*,\theta}\, (\nabla_{\Gamma_h^\theta} \cdot v_h^\theta)\, \d\theta \biggr|
\\
&\le  \int_0^1 \int_{\Gamma_h^\theta} \| e_u^\theta\|_{L^2(\Gamma_h^\theta)}\, \|\nabla_{\Gamma_h^\theta} \cdot e_x^\theta\|_{L^2(\Gamma_h^\theta)} \,  \|u_h^{*,\theta}\|_{L^\infty(\Gamma_h^\theta)}\, 
\|\nabla_{\Gamma_h^\theta} \cdot v_h^{*,\theta}\|_{L^\infty(\Gamma_h^\theta)}\, \d\theta
\\
&\ + \int_0^1 \int_{\Gamma_h^\theta} \| e_u^\theta\|_{L^2(\Gamma_h^\theta)}\, \|\nabla_{\Gamma_h^\theta} \cdot e_x^\theta\|_{L^\infty(\Gamma_h^\theta)} \,  \|u_h^{*,\theta}\|_{L^\infty(\Gamma_h^\theta)}\, 
\|\nabla_{\Gamma_h^\theta} \cdot e_v^{\theta}\|_{L^2(\Gamma_h^\theta)}\, \d\theta
\\
&\le c\, \normM{\eu} \normK{\ex} \| \bfu^* \|_\infty \|\nabla_{\Ga_h(\xs)}\cdot v_h^* \|_{L^\infty(\Ga_h(\xs))}
\\
&\ + c\, \normM{\eu}\,\|\nabla_{\Ga_h(\xs)} e_x \|_{L^\infty(\Ga_h(\xs))} \,
\| \bfu^* \|_\infty \normK{\ev}
\\
&\le C\, \normM{\eu} \Bigl( \normK{\ev} + \normK{\ex} \Bigr).
\end{align*}
Altogether we obtain the bound
$$
-\test^T \diff \Big( \big( \bfM(\bfx)-\bfM(\xs) \big) \us \Big) \le C\, \normM{\eu} \Bigl( \normK{\ev} + \normK{\ex} \Bigr).
$$

(iv) We obtain similarly 
\begin{align*}
    &-\test^T  \diff \Big( \big( \bfM(\bfx)-\bfM(\xs) \big) \eu\Big)
    \\
%    = &\ \test^T \Big( \diff \big( \bfM(\bfx)-\bfM(\xs) \big)\Big) \eu \\
%    &\ + \half \dot \test^T \big( \bfM(\bfx)-\bfM(\xs) \big) \eu + \half \test^T \big( \bfM(\bfx)-\bfM(\xs) \big) \dot \eu \\
    &= \ -\half \test^T \Big( \diff \big( \bfM(\bfx)-\bfM(\xs) \big)\Big) \eu  - \half \diff \Big( \test^T \big( \bfM(\bfx)-\bfM(\xs) \big) \eu\Big)\\
    &\leq c\, \normM{\eu} \Bigl( \normK{\ev} + \normK{\ex} \Bigr) \|e_u\|_{L^\infty(\Ga_h(\xs))}
    \\
    &\qquad\qquad
    - \half \diff \Big( \test^T \big( \bfM(\bfx)-\bfM(\xs) \big) \eu\Big) \\
    &\leq \ C \normM{\eu}\bigl(  \normK{\ev} +  \normK{\ex}  \bigr)
     - \half \diff \Big( \test^T \big( \bfM(\bfx)-\bfM(\xs) \big) \eu\Big) ,
\end{align*}
where  we used the second bound of \eqref{eq: assumed bounds - 2} in the last inequality.

(v) Lemma~\ref{lemma: matrix differences}, the Cauchy-Schwarz inequality and Lemma~\ref{lemma:theta-independence}  yield
\begin{align*}
  & - \test^T \big( \bfA(\bfx)-\bfA(\xs) \big) \us 
  \\
  & = 
   -\int_0^1 \int_{\Gamma_h^\theta} \nabla_{\Gamma_h^\theta} e_u^\theta \cdot
   \bigl( D_{\Gamma_h^\theta} e_x^\theta \bigr) \nabla_{\Gamma_h^\theta} u_h^{*,\theta} \, \d\theta
   %\leq &\ c |\nbg e_\bfu|_\bfy \normK{\bfe}
  \\
    &\leq c \normA{\eu} \,\normA{\ex} \,  
    \|  \nabla_{\Gamma_h(\xs)} u_h^* \|_{L^\infty(\Gamma_h(\xs))}
  \\
   &\leq C \normA{\eu} \,\normK{\ex}.
\end{align*}

(vi) Similarly  we estimate
\begin{align*}
   - \test^T \big( \bfA(\bfx)-\bfA(\xs) \big) \eu \leq &\ c \normA{\eu}^2 \|D_{\Ga_h(\xs)} e_x\|_{L^\infty(\Ga_h(\xs))} \\
    \leq &\ C h^{(\kappa-1)/2} \normA{\eu}^2,
\end{align*}
where we used the first bound of \eqref{eq: assumed bounds - 2}.

(vii) The coupling term is estimated similarly to (iii) in the proof of Proposition~\ref{proposition: stability - uncoupled problem}:
\begin{align*}
    \test^T \big(\bff(\bfx,\bfu) - \bff(\xs,\us)\big) = &\ \int_{\Ga_h^1} \!  f(u_h,\nabla_{\Ga_h^1} u_h) e_u^1- \int_{\Ga_h^0} \!  f(u_h^\ast,\nabla_{\Ga_h^0} u_h^\ast)e_u^0 .
\end{align*}
With
\begin{equation}\label{uhtheta}
u_h^\theta = \sum_{j=1}^N (u_j^*+\theta (\eu)_j) \, \phi_j[\xs+\theta \ex] = u_h^{*,\theta}+\theta e_u^\theta
\end{equation}
we therefore have
\begin{align*}
    \test^T \big(\bff(\bfx,\bfu) - \bff(\xs,\us)\big) = &\
    \int_0^1 \frac\d{\d\theta}\int_{\Gamma_h^\theta} f(u_h^\theta, \nabla_{\Gamma_h^\theta}u_h^\theta) \, e_u^\theta\, \d\theta
    \end{align*}
and with the Leibniz formula (noting that $e_x^\theta$ is the velocity of the surface $\Gamma_h^\theta$ considered as a function of $\theta$), we rewrite this as
\begin{align*}
   & \test^T \big(\bff(\bfx,\bfu) - \bff(\xs,\us)\big) \\
    & = \
    \int_0^1\int_{\Gamma_h^\theta} \Bigl( \partial_\theta^\bullet f(u_h^\theta, \nabla_{\Gamma_h^\theta}u_h^\theta) \, e_u^\theta + f(u_h^\theta, \nabla_{\Gamma_h^\theta}u_h^\theta) \, e_u^\theta \,(\nabla_{\Gamma_h^\theta}\cdot e_x^\theta) \Bigr)\d\theta.
    \end{align*}
 Here we use the chain rule
 $$
  \partial_\theta^\bullet f(u_h^\theta, \nabla_{\Gamma_h^\theta}u_h^\theta) =
  \partial_1 f (u_h^\theta, \nabla_{\Gamma_h^\theta}u_h^\theta) \,  \partial_\theta^\bullet u_h^\theta +
  \partial_2 f (u_h^\theta, \nabla_{\Gamma_h^\theta}u_h^\theta) \,  \partial_\theta^\bullet  \nabla_{\Gamma_h^\theta}u_h^\theta
 $$
 \bcl 
 and observe the following: by the assumed smoothness of $f$ and the exact solution $u$, and by the bound~\eqref{eq: assumed bounds - 2} for $e_u$ (and hence for $e_u^\theta$ by 
 Lemmas~\ref{lemma:cond-equiv} and~\ref{lemma:theta-independence}), 
 we have on recalling \eqref{uhtheta}
$$
\|  \partial_i f (u_h^\theta, \nabla_{\Gamma_h^\theta}u_h^\theta) \|_{L^\infty(\Gamma_h^\theta)} \le C, \qquad i=1,2.
$$
We note
 $$
  \partial_\theta^\bullet u_h^\theta = e_u^\theta
 $$
and the relation, see \cite[Lemma~2.6]{DziukKronerMuller},
\bcl
 $$
  \partial_\theta^\bullet  \nabla_{\Gamma_h^\theta}u_h^\theta =
   \nabla_{\Gamma_h^\theta} \partial_\theta^\bullet u_h^\theta  
   %+B( \nabla_{\Gamma_h^\theta}u_h^\theta,  \nabla_{\Gamma_h^\theta} e_x^\theta)
   -  \nabla_{\Gamma_h^\theta} e_x^\theta\, \nabla_{\Gamma_h^\theta}u_h^\theta
   + \nu_h^\theta (\nu_h^\theta)^T (\nabla_{\Gamma_h^\theta} e_x^\theta)^T \nabla_{\Gamma_h^\theta}u_h^\theta.
 $$
We then have, on inserting \eqref{uhtheta} and using once again
Lemmas~\ref{lemma:cond-equiv} and~\ref{lemma:theta-independence} and the bound~\eqref{eq: assumed bounds - 2},
\begin{align*}
& \test^T \big(\bff(\bfx,\bfu) - \bff(\xs,\us)\big)
\\
&= \int_0^1\int_{\Gamma_h^\theta} e_u^\theta \Bigl( \partial_1 f (u_h^\theta, \nabla_{\Gamma_h^\theta}u_h^\theta) e_u^\theta 
\\
&\quad +  \partial_2 f (u_h^\theta, \nabla_{\Gamma_h^\theta}u_h^\theta)
\bigl( \nabla_{\Gamma_h^\theta} e_u^\theta  
%\\
%&\hspace{3cm} 
-  \nabla_{\Gamma_h^\theta} e_x^\theta\, \nabla_{\Gamma_h^\theta}u_h^\theta
   + \nu_h^\theta (\nu_h^\theta)^T (\nabla_{\Gamma_h^\theta} e_x^\theta)^T \nabla_{\Gamma_h^\theta}u_h^\theta \bigr)\Bigr) \d\theta
\\
&\le c \| e_u \|_{L^2(\Gamma_h(\xs))} \Bigl( \| e_u \|_{L^2(\Gamma_h(\xs))} 
\\
&\quad + \| \nabla_{\Gamma_h(\xs))} e_u \|_{L^2(\Gamma_h(\xs))} + 
\| \nabla_{\Gamma_h(\xs))} e_x \|_{L^2(\Gamma_h(\xs))}\,
\| \nabla_{\Gamma_h(\xs))} u_h^* \|_{L^\infty(\Gamma_h(\xs))}
\\
&\quad + \| \nabla_{\Gamma_h(\xs))} e_x \|_{L^\infty(\Gamma_h(\xs))} \,
\| \nabla_{\Gamma_h(\xs))} e_u \|_{L^2(\Gamma_h(\xs))} \Bigr)
\\
&\le C \normM{\eu} \Bigl( \normM{\eu} + \normA{\eu} + \normA{\ex} + \normA{\eu} \Bigr)
\\
&\le C  \normM{\eu} \Bigl( \normM{\eu} + \normA{\eu} + \normK{\ex} \Bigr).
\end{align*}

Combined, the above estimates  yield the following inequality:
\begin{align*}
    \half \diff \normM{\eu}^2 &+ \normA{\eu}^2 \leq \ C\,\normM{\eu}^2\\
    &\ +C \normM{\eu} \Bigl( \normK{\ev} + \normK{\ex} \Bigr)\\
    &\ + C \normM{\eu} \normK{\ev} + c \normM{\eu}  \normK{\ex} \\
    &\ + C \normM{\eu} \normK{\ev} \\
    &\ - \half \diff \Big( \test^T \big( \bfM(\bfx)-\bfM(\xs) \big) \eu\Big) \\
    &\ + C \normA{\eu} \normK{\ex} \\
    &\ + C h^{(\kappa-1)/2} \normA{\eu}^2 \\
    &\ + C \normM{\eu} \Bigl( \normM{\eu} + \normA{\eu} + \normK{\ex} \Bigr) \\
    &\ + C \normA{\eu} \|\du\|_{\star,\xs} .
\end{align*}
Estimating further, using Young's inequality and absorptions into $\normA{\eu}^2$ (using $h\leq h_0$ for a sufficiently small $h_0$), we obtain the following estimate,  where we can choose $\rho>0$  small at the expense of enlarging the constant in front of $ \normM{\eu}^2$:
\begin{equation}
\label{eq: PDE diff bound}
    \begin{aligned}
        \half \diff \normM{\eu}^2  + \half \normA{\eu}^2 \leq &\ c \normM{\eu}^2 + c \normK{\ex}^2
        + \rho\normK{\ev}^2  \\
        &\- \half \diff \Big( \test^T \big( \bfM(\bfx)-\bfM(\xs) \big) \eu\Big) + c \|\du\|_{\star,\xs}^2 .
    \end{aligned}
\end{equation}
(B) {\it Estimates in the surface equation:} Based on Section~\ref{section: uncoupled problem}, we obtain
\begin{align*}
    \normK{\ev}^2 \leq &\ c \normK{\ex}^2  + |\ev^T \big(\bfg(\bfx,\bfu) - \bfg(\xs,\us)\big)| + c \|\dv\|_{\star,\xs}^2 ,
\end{align*}
where the coupling term can be estimated based on (iii) in the proof of Proposition~\ref{proposition: stability - uncoupled problem} and (vii) above:
\bcl
$$
|\ev^T \big(\bfg(\bfx,\bfu) - \bfg(\xs,\us)\big)| \le \normM{\ev} \bigl( \normM{\eu} + \normA{\eu} + \normK{\ex} \bigr).
$$
We then obtain
\begin{equation}
\label{eq: surf dot bound}
    \normK{\ev}^2 \leq C \bigl( \normK{\ex}^2 +  \normM{\eu}^2
    + \normA{\eu}^2
    +  \|\dv\|_{\star,\xs}^2 \bigr) .
\end{equation}
As in Section~\ref{section: uncoupled problem}, this provides the estimate
\begin{equation}
\label{eq: surf diff bound}
    \half \diff \normK{\ex}^2 \leq C \bigl( \normK{\ex}^2 +  \normM{\eu}^2
    + \normA{\eu}^2
    +  \|\dv\|_{\star,\xs}^2 \bigr) .
\end{equation}
(C) {\it Combination:} We first insert \eqref{eq: surf dot bound} into \eqref{eq: PDE diff bound}, where we can choose $\rho>0$ so small that $C\rho\le 1/2$ for the constant $C$ in \eqref{eq: surf dot bound}.
Then we take a linear combination of  \eqref{eq: PDE diff bound} and \eqref{eq: surf diff bound} to obtain,  for a sufficiently small $\sigma>0$,
\begin{equation*}
    \begin{aligned}
        \diff \normM{\eu}^2  + &\, \frac12\normA{\eu}^2 +  \sigma  \diff \normK{\ex}^2 \\
        \leq &\ c \normM{\eu}^2  + c \normK{\ex}^2 + \diff \Big( \test^T \big( \bfM(\bfx)-\bfM(\xs) \big) \eu\Big) \\
        &\ + c \|\du\|_{\star,\xs}^2 + c \|\dv\|_{\star,\xs}^2 .
    \end{aligned}
\end{equation*}
We integrate both sides over $[0,t]$, for $0\leq t \leq t^*$, to get
\begin{equation*}
    \begin{aligned}
        & \normM{\eu(t)}^2  + \frac12 \int_0^t \normA{\eu(s)}^2 \d s + \sigma\normK{\ex(t)}^2 \\
        & \leq \normM{\eu(0)}^2  + \normK{\ex(0)}^2 + c \int_0^t \Bigl(\normM{\eu(s)}^2  + \normK{\ex(s)}^2 \Bigr)\d s\\
        &\ \qquad - \test(t)^T \big( \bfM(\bfx)-\bfM(\xs) \big) \eu(t) \\
        &\ \qquad + c \int_0^t \Bigl(\|\du(s)\|_{\star,\xs}^2 + \|\dv(s)\|_{\star,\xs}^2 \Bigr) \d s .
    \end{aligned}
\end{equation*}
The middle term can be further bounded using Lemmas~\ref{lemma: matrix differences}--\ref{lemma:theta-independence} and an $L^2-L^\infty-L^2$ estimate, as
\begin{align*}
    \test(t)^T \big( \bfM(\bfx)-\bfM(\xs) \big) \eu(t) = &\ \int_0^1 \int_{\Gamma_h^\theta} e_u^\theta \cdot (\nb_{\Gamma_h^\theta} \cdot e_x^\theta) e_u^\theta \,\d\theta \\
    \leq &\ c \normM{\eu(t)}^2 \|\nb_{\Ga_h(\xs)} \cdot e_x\|_{L^\infty(\Ga_h(\xs))} \\
    \leq &\ C h^{(\kappa-1)/2} \normM{\eu(t)}^2  ,
\end{align*}
where  we used the first bound from \eqref{eq: assumed bounds - 2} in the last inequality.

Absorbing this to the left-hand side and using Gronwall's inequality yields the stability estimate

    \begin{align}\nonumber
        \normM{\eu(t)}^2 + &\ \int_0^t \normA{\eu(s)}^2 \d s + \normK{\ex(t)}^2 \\
        \leq &\ c \int_0^t \Bigl(\|\du(s)\|_{\star,\xs}^2 + \|\dv(s)\|_{\star,\xs}^2  \Bigr)\d s .
        \label{eu-ex-est}
    \end{align}

Inserting this bound in \eqref{eq: surf dot bound}, squaring and integrating from $0$ to $t$ yields
$$
\int_0^t \normK{\ev(s)}^2\,\d s \le c \int_0^t\Bigl(\|\du(s)\|_{\star,\xs}^2 + \|\dv(s)\|_{\star,\xs}^2  \Bigr)\d s .
$$
With the assumed bounds of the defects, we obtain $O(h^k)$ error estimates for $0 \leq t \leq t^*$.
Finally, to show that $t^*=T$, we use the same argument as at the end of the  proof of Proposition~\ref{proposition: stability - uncoupled problem}.
\QED \end{proof}

\bcl
\begin{remark} If the coupling function $g=g(u)$ in \eqref{eq: coupled problem} does not depend on the tangential gradient of $u$, then the term $\normA{\eu}^2$ does not appear in the bound \eqref{eq: surf dot bound}. Therefore, inserting the  estimate \eqref{eu-ex-est} into \eqref{eq: surf dot bound} then yields a pointwise stability estimate for $\ev$: uniformly for $0\le t \le T$,
$$
\normK{\ev(t)}^2 \le C \|\dv(t)\|_{\star,\xs}^2 +  C\int_0^t \Bigl(\|\du(s)\|_{\star,\xs}^2 + \|\dv(s)\|_{\star,\xs}^2  \Bigr)\d s.
$$
\end{remark}

\ecl

\section{Geometric estimates}
\label{section: geometric estimates}

In this section we give further notations and some technical lemmas from \cite{highorder} that will be used later on. Most of the results are high-order and time-dependent extensions of geometric approximation  estimates shown in \cite{Dziuk88,DziukElliott_ESFEM,DziukElliott_L2} and \cite{Demlow2009}.

\subsection{The interpolating surface}
\label{subsection: interpolating surface}
We return to the setting of Section~\ref{section: problem}, where $X(\cdot,t)$ defines a smooth surface $\Gamma(t)=\Gamma(X(\cdot,t))$.  For an admissible triangulation of $\Gamma(t)$ with nodes $x_j^*(t)=X(p_j,t)$ and the corresponding nodal vector $\bfx^*(t)=(x_j^*(t))$, we define the interpolating surface by
$$
X_h^*(p_h,t) = \sum_{j=1}^N x_j^*(t) \, \phi_j[\bfx(0)](p_h), \qquad p_h \in \Gamma_h^0,
$$
which has the properties that $X_h^*(p_j,t)=x_j^*(t)=X(p_j,t)$ for $j=1,\dots,N$,  and
$$
\Gamma_h^*(t):=\Gamma_h(\bfx^*(t))=\Gamma(X_h^*(\cdot,t)).
$$
In the following we drop the argument $t$ when it is not essential.
The velocity of the interpolating surface $\Ga_h^*$, defined as in Section~\ref{section:ESFEM}, is denoted by $v_h^*$.

\subsection{Approximation results}
\label{section: lift}

%%%%%%%%%%%%%%%%%%%%%%%%%%%%%%%%%%%%%%%%%%%%%%%%%%%%%%%%%%%%
%   local definitions
\newcommand{\pr}{\textnormal{Pr}}
\newcommand{\wein}{\mathcal{H}}
%%%%%%%%%%%%%%%%%%%%%%%%%%%%%%%%%%%%%%%%%%%%%%%%%%%%%%%%%%%%
The lift of a function $\eta_h:\Gamma_h^*(t)\to\R$ is again denoted by $\eta_h^l:\Gamma(t)\to\R$, defined via the oriented distance function $d$ between $\Gamma_h^*(t)$ and $\Gamma(t)$ provided that the surfaces are sufficiently close (which is the case if $h$ is sufficiently small).

\begin{lemma}{\bf (Equivalence of norms \cite[Lemma~3]{Dziuk88}, \cite{Demlow2009})}
\label{lemma: equivalence of norms}
    Let $\eta_h : \Ga_h^*(t) \to \R$ with lift $\eta_h^l : \Ga(t) \to \R$. Then the 
    %  discrete and continuous $L^p$ and $W^{1,p}$ norms
    \bbk $L^p$ and $W^{1,p}$ norms on the discrete and continuous surfaces \ebk 
    are equivalent for $1\le p \le \infty$, uniformly in the mesh size $h\le h_0$ (with sufficiently small $h_0>0$) and in $t\in [0,T]$.
\end{lemma}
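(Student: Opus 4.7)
\smallskip\noindent\textbf{Proof proposal.}
The strategy is the classical one from \cite{Dziuk88,Demlow2009}: pull back the integrals on $\Gamma(t)$ to integrals on $\Gamma_h^*(t)$ through the lift $x\mapsto y(x)=x-d(x,t)\,\nu(y,t)$, and show that the resulting Jacobian factor and the chain-rule matrix relating tangential gradients are, elementwise on the (curved) triangles of $\Gamma_h^*(t)$, both bounded above and below by constants independent of $h$ and of $t\in[0,T]$. Concretely, for $x\in\Gamma_h^*(t)$ with lift $y$, one has the surface-element identity
\begin{equation*}
  \dd\sigma(y) \,=\, \delta_h(x,t)\, \dd\sigma_{h}^{\ast}(x), \qquad
  \delta_h \,=\, \bigl(1-d\,\kappa_1\bigr)\bigl(1-d\,\kappa_2\bigr)\,\frac{\nu\cdot\nu_h}{|P_h\nu|},
\end{equation*}
where $\kappa_1,\kappa_2$ are the principal curvatures of $\Gamma(t)$ at $y$, $\nu_h=\nu_{\Gamma_h^*(t)}$, and $P_h$ is the tangential projection onto $\Gamma_h^*(t)$; the analogous formula for tangential gradients is $\nabla_{\Gamma} \eta_h^l(y) = B_h(x,t)\,\nabla_{\Gamma_h^*}\eta_h(x)$ with a matrix $B_h$ built from $(I-d\,\mathcal{H})$, $\nu$, $\nu_h$, where $\mathcal{H}$ is the Weingarten map.

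Next I would exploit the interpolation property $X_h^*(p_j,t)=X(p_j,t)$ and standard interpolation error bounds: on each element $K$ of $\Gamma_h^*(t)$ of diameter $\lesssim h$, one has $\|d\|_{L^\infty(K)}\le C h^{k+1}$ and $\|\nu-\nu_h\|_{L^\infty(K)}\le C h^{k}$, where $C$ depends only on bounds of finitely many derivatives of $X(\cdot,t)$. Substituting these bounds into the explicit formulas above yields $\|\delta_h-1\|_{L^\infty(\Gamma_h^*(t))}\le C h$ and $\|B_h-P_h\|_{L^\infty(\Gamma_h^*(t))}\le C h$, so for $h\le h_0$ both $\delta_h$ and $B_h$ are invertible with bounds uniform in $h$. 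The equivalence of the $L^p$ norms follows directly by the change-of-variables formula, and the equivalence of the $W^{1,p}$ norms then follows by combining the $L^p$ equivalence for $\eta_h^l$ with the pointwise identity $\nabla_{\Gamma}\eta_h^l = B_h\nabla_{\Gamma_h^*}\eta_h$ (and its inverse relation).

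The only genuinely new issue beyond the stationary case of \cite{Dziuk88,Demlow2009} is uniformity in $t\in[0,T]$: the constants in the interpolation estimates, in the bounds on the curvatures of $\Gamma(t)$, and in the admissibility (shape-regularity) of the triangulation of $\Gamma(t)$ all need to be chosen independently of $t$. This is where I expect the only real obstacle to lie, and it is handled by combining the assumed smoothness and non-degeneracy of the flow map $X(\cdot,t):\Gamma^0\to\Gamma(t)$ with the compactness of $[0,T]$: the smoothness gives uniform-in-$t$ bounds on all geometric quantities of $\Gamma(t)$, while the non-degeneracy guarantees that the triangulations of $\Gamma(t)$ obtained by pushing forward a fixed admissible triangulation of $\Gamma^0$ remain admissible with constants uniform in $t$, hence so do the constants in the interpolation estimates that feed into the bounds on $\delta_h$ and $B_h$.
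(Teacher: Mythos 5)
Your proposal follows the classical route that underlies the cited Lemma~3 of Dziuk (1988), its higher-order refinement in Demlow (2009), and the time-dependent extension in \cite{highorder}: pull back via the lift, control the surface-element Jacobian and the chain-rule matrix for tangential gradients using the geometric estimates $\|d\|_{L^\infty}\le Ch^{k+1}$ and $\|\nu-\nu_h\|_{L^\infty}\le Ch^{k}$ (Lemma~\ref{lemma: geometric est}), and obtain uniformity in $t\in[0,T]$ from smoothness and non-degeneracy of the flow map together with compactness. The paper itself offers no proof for this lemma, only the citations, so this is exactly the right content; your discussion of where the new difficulty lies (time-uniformity of the geometric and shape-regularity constants) matches the remark the paper makes after Theorem~\ref{theorem: coupled error estimate}.

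One slip to correct: your surface-element factor
\begin{equation*}
  \delta_h = (1-d\,\kappa_1)(1-d\,\kappa_2)\,\frac{\nu\cdot\nu_h}{|P_h\nu|}
\end{equation*}
cannot be right as written, since with unit normals $|P_h\nu|=\sqrt{1-(\nu\cdot\nu_h)^2}\to 0$ as $\nu_h\to\nu$, so your expression would blow up instead of tending to $1$. The standard formula (see, e.g., \cite{DziukElliott_acta}) is $\delta_h=(1-d\,\kappa_1)(1-d\,\kappa_2)\,\nu\cdot\nu_h$, or its reciprocal depending on which direction you take the change of variables. With the corrected factor, the same inputs $\|d\|_{L^\infty}\le Ch^{k+1}$, $\|\nu-\nu_h\|_{L^\infty}\le Ch^{k}$ give $\delta_h = 1 + O(h^{k})$ (not merely $O(h)$), and the rest of your argument — two-sided bounds on $\delta_h$ and on the gradient matrix, hence $L^p$ and $W^{1,p}$ equivalence — goes through unchanged.
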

In particular, there is a constant $c$ such that for $h\le h_0$ and $0\le t \le T$,
\begin{align*}
    c\inv \|\eta_h\|_{L^2(\Ga_h^*(t))} \leq &\ \|\eta_h^l\|_{L^2(\Ga(t))} \leq c\|\eta_h\|_{L^2(\Ga_h^*(t))} , \\
    c\inv \|\eta_h\|_{H^1(\Ga_h^*(t))} \leq &\ \|\eta_h^l\|_{H^1(\Ga(t))} \leq c\|\eta_h\|_{H^1(\Ga_h^*(t))} .
\end{align*}

%
%We now turn to the study of some geometric concepts and their errors.
%%By $\delta_h$ we denote the quotient between the \co\ and discrete surface measures, $\d A$ and $\d A_h$, defined as $\delta_h \d A_h = \d A$. Further, we recall that
%Let us denote by $\pr$ and $\pr_{\Ga(\Xs)}$ the projections onto the tangent spaces of $\Ga(X)$ and $\Ga(\Xs)$, respectively:
%\begin{equation*}
%    \pr = I - \nu\nu^T , \quad \textrm{and} \quad
%    \pr_{\Ga(\Xs)} = I - \nu_{\Ga(\Xs)}\nu_{\Ga(\Xs)}^T .
%\end{equation*}
%We further set, from \cite{DziukElliott_L2},
%\begin{equation*}
%    Q_h = \frac{1}{\delta_h} (I - d\wein)\pr\pr_{\Ga(\Xs)}\pr(I - d\wein),
%\end{equation*}
%where
%Let $\wein$ ($\wein_{ij} = \pa_{x_j}\nu_i$) be the (extended) Weingarten map.

Later on the following estimates will be used. They have been shown in \cite{highorder}, based on \cite{Demlow2009} and \cite{DziukElliott_L2}.
\begin{lemma}
\label{lemma: geometric est}
   \bbk 
   Let $\Ga(t)$ and $\Ga_h^*(t)$ be as above in Section~\ref{subsection: interpolating surface}. 
   %in Section~\ref{subsection: basic notions} and \ref{section:lifts}. 
   Then, for $h\leq h_0$ with a sufficiently small $h_0>0$, we have the following estimates for the distance function $d$ from \eqref{eq: distance function}, and for the error in the normal vector: 
   \ebk
    $$
        \|d\|_{L^\infty(\Ga_h^*(t))} \leq c h^{k+1}, \qquad
        \|\nu_{\Ga(t)} - \nu_{\Ga_h^*(t)}^l\|_{L^\infty(\Ga(t))} \leq c h^{k} ,
    $$
    with constants independent of $h\le h_0$ and $t\in [0,T]$.
\end{lemma}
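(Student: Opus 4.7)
The plan is to reduce both estimates to standard polynomial interpolation error bounds on reference elements, by exploiting that the interpolating surface $\Gamma_h^*(t)$ is, by construction, the piecewise polynomial interpolant of degree $k$ of $\Gamma(t)$ at the nodes $x_j^*(t)=X(p_j,t)$. The key point is that on each curved element, a suitable local parametrization makes the interpolation a standard Lagrange interpolation problem to which Bramble-Hilbert type estimates apply.

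First I would fix $t\in[0,T]$ (suppressing it in the notation) and work element-by-element. Given a curved element $E\subset\Gamma$ of diameter $O(h)$, parametrize both $E$ and the corresponding flat reference triangle $\widehat{E}$ via a smooth chart of the exact surface. Using the $H^{k+1}$-regularity of $X(\cdot,t)$ and the non-degeneracy of the flow map (together with admissibility of the triangulation), this chart has Jacobian bounded above and below uniformly in $h$ and uniformly for $t\in[0,T]$ by compactness. The interpolating surface element $E_h^*\subset\Gamma_h^*$ is then precisely the image of the Lagrange interpolant of degree $k$ (built on $\widehat E$) of the parametrization of $E$. Standard polynomial interpolation theory on $\widehat E$ therefore gives, for the parametrization error $\rho$,
\begin{equation*}
\|\rho\|_{L^\infty(\widehat E)} \le c\,h^{k+1},\qquad \|\nabla\rho\|_{L^\infty(\widehat E)} \le c\,h^{k}.
\end{equation*}
The constants depend on $\|X(\cdot,t)\|_{W^{k+1,\infty}}$, which is bounded uniformly for $t\in[0,T]$ by the smoothness assumption. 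Pulling back to the surface yields that at every point $x\in E_h^*$, its closest point $y\in E\subset\Gamma$ satisfies $|x-y|\le c h^{k+1}$, which is exactly $|d(x)|\le c h^{k+1}$ since the signed distance function realizes this nearest-point distance (for $h\le h_0$ sufficiently small the nearest point is unique and lies on the normal through $x$, by Lemma~3 of \cite{Dziuk88} and the tubular neighborhood argument in \cite{Demlow2009}).

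For the normal vector estimate, I would use that the unit normal $\nu_{\Ga_h^*}$ on each element $E_h^*$ is obtained by normalizing the cross product of two tangent vectors derived from the parametrization, while $\nu_\Gamma$ is obtained from the analogous tangent vectors of the exact parametrization. The tangent vectors differ by the gradient of $\rho$, which is $O(h^k)$ by the interpolation estimate above. Since the cross-product and normalization map is smooth (and the tangent vectors are uniformly linearly independent for $h$ small enough, again by the non-degeneracy assumption), local Lipschitz continuity transfers this to $|\nu_{\Ga_h^*}(x) - \nu_\Gamma(y)| \le c h^k$ at corresponding points, and the $L^\infty$ bound on the lift follows from Lemma~\ref{lemma: equivalence of norms}.

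The main obstacle is bookkeeping: the time-dependence must be folded through all constants, and the higher-order case $k\ge 2$ requires that the interpolation be defined on curved reference elements consistent with Section~2.5 of Demlow~\cite{Demlow2009}, rather than on flat reference simplices as in the classical piecewise linear case. Once the local charts are set up correctly and compactness in $t$ is invoked to absorb $t$-dependent constants into a single $c$, everything reduces to off-the-shelf polynomial interpolation estimates, and the proof follows as in the time-independent analogue proved in Demlow~\cite{Demlow2009} and its time-dependent extension in~\cite{highorder}.
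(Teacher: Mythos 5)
The paper does not prove this lemma itself; it merely cites \cite{highorder}, which in turn rests on \cite{Demlow2009} and \cite{DziukElliott_L2}. Your reconstruction is a correct account of what those references do: reduce to local Lagrange interpolation of the surface parametrization on (curved) reference elements, get $O(h^{k+1})$ in $L^\infty$ and $O(h^k)$ for the gradient, then read off the distance bound from the $L^\infty$ estimate and the normal bound from the gradient estimate via Lipschitz continuity of the normalized cross-product map, with $t$-uniformity handled by the assumed smoothness and non-degeneracy of the flow map together with compactness of $[0,T]$. One small point worth being explicit about: for the normal estimate the lemma compares $\nu_\Gamma$ with the \emph{lifted} discrete normal, i.e.\ the evaluation points are matched by closest-point projection, not by the chart; but since the two matchings differ by $O(h^{k+1})$ (by the distance bound) and $\nu_\Gamma$ is smooth, this discrepancy is absorbed into the $O(h^k)$ bound. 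Also, the appeal to Lemma~\ref{lemma: equivalence of norms} at the end is not quite the right tool (that lemma concerns norm equivalence for finite element functions, not the matching of evaluation points); the correct justification is the smallness of the nearest-point displacement just mentioned.
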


\subsection{Bilinear forms and their estimates}
\label{section: bilinear forms}

We use surface-dependent bilinear forms defined similarly as in \cite{DziukElliott_L2}: Let $X$ be a given surface with velocity $v$, with interpolation surface $\Xs$ with velocity $v_h^*$. For arbitrary $z,\vphi \in H^1(\Ga(X))$ and for their discrete analogs $Z_h, \phi_h \in S_h(\xs)$:
\begin{equation*}
    \begin{aligned}[c]
        m(X;z,\vphi)                &= \int_{\Ga(X)}\!\!\!\! z \vphi, \\
        a(X;z,\vphi)                &= \int_{\Ga(X)}\!\!\!\! \nbg z \cdot \nbg \vphi, \\
        q(X;v;z,\vphi)              &= \int_{\Ga(X)}\!\!\!\! (\nbg \cdot v) z\vphi, \\
%        b(X;v;z,\vphi)              &= \int_{\Ga(X)}\!\!\!\! \Btensor(v) \nbg z \cdot \nbg \vphi,
    \end{aligned}
    \quad
    \begin{aligned}[c]
        m(\Xs;Z_h,\phi_h)         &= \int_{\Ga(\Xs)}\!\!\!\! Z_h \phi_h, \\
        a(\Xs;Z_h,\phi_h)         &= \int_{\Ga(\Xs)}\!\!\!\! \nbgh Z_h \cdot \nbgh \phi_h, \\
        q(\Xs;v_h^*;Z_h,\phi_h)     &= \int_{\Ga(\Xs)}\!\!\!\! (\nbgh \cdot v_h^*) Z_h \phi_h, \\
%        b_h(\Xs;v_h;Z_h,\phi_h)     &= \int_{\Ga(\Xs)}\!\!\!\! \Btensor_h(v_h) \nbgh Z_h \cdot \nbgh \phi_h,
    \end{aligned}
\end{equation*}
where the discrete tangential gradients are understood in a piecewise sense.
%, and with the tensors given as
%\begin{alignat*}{3}
%    \Btensor(v)_{ij} &= \delta_{ij} (\nbg \cdot v) - \big( (\nbg)_i v_j + (\nbg)_j v_i \big) , \\
%    \Btensor_h(v_h)_{ij} &= \delta_{ij} (\nbgh \cdot v_h) - \big( (\nbgh)_i (v_h)_j + (\nbgh)_j (v_h)_i \big),
%\end{alignat*}
%for $i,j=1,\dotsc,m+1$.
For more details see \cite[Lemma 2.1]{DziukElliott_L2} (and the references in the proof), or \cite[Lemma 5.2]{DziukElliott_acta}.
%
%In an analogous way, without further distinction, we define the vector-valued counterparts of the above bilinear forms.

%\subsubsection*{Transport properties}
%
%The following transport results, especially the one with discrete material derivatives on the continuous surface, are of great importance during the defect estimates later on.
%

We start by defining a discrete velocity on the smooth surface, denoted by $\vhat$. 
\bbk We follow Section~5.3 of \cite{highorder}, where the high-order ESFEM generalization of the discrete velocity on $\Ga(X)$ from Sections~4.3 and 5.3 of \cite{DziukElliott_L2} is discussed.
\ebk
Using the lifted elements, $\Ga(X)$ is decomposed into curved elements whose \bbk Lagrange points \ebk move with the velocity $\vhat$ defined by
\begin{equation*}
    \vhat \big((\Xs)^l(\cdot,t),t\big) = \diff (\Xs)^l(\cdot,t).
\end{equation*}
%Using the chain rule on the right-hand side we obtain, for $y=p(\Xs,t)$ from \eqref{eq: lift defining equation},
%\begin{align*}
%    \vhat(y,t) =&\ \pa_t p(\Xs,t) + v_h(\Xs,t) \cdot \nb p(\Xs,t) \\
%    =&\ -d \pa_t \nu_{\Ga(X)} - \nu_{\Ga(X)} \pa_t d + (\pr - d\wein)v_h .
%\end{align*}

Discrete material derivatives on $\Ga(\Xs)$ and $\Ga(X)$ are given by
\begin{equation*}
    \begin{aligned}
        \mat_{\Vs} \vphi_h =&\ \pa_t \vphi_h + \Vs \cdot \nb \vphi_h , \\
        \mat_{\vhat} \vphi_h^l =&\ \pa_t \vphi_h^l + \vhat \cdot \nb \vphi_h^l ,
    \end{aligned}
    \qquad\qquad (\vphi_h \in S_h(\xs)) .
\end{equation*}
In \cite[Lemma~4.1]{DziukElliott_L2} it was shown that the transport property of the basis functions carries over to the lifted basis functions $\phi_j[\xs]$:
\begin{equation*}
    \mat_{\vhat} \phi_j[\xs]^l = (\mat_{\Vs} \phi_j[\xs])^l = 0 , \qquad
  %  \textrm{for any basis function } \phi_j=\phi_j[\xs] \quad
    (j=1,\dotsc,N) .
\end{equation*}
Therefore, the above discrete material derivatives and the lift operator satisfy, for $\vphi_h \in S_h(\Xs)$,
\begin{equation}
\label{eq: lift and material derivatives}
    \mat_{\vhat} \vphi_h^l = (\mat_{\Vs} \vphi_h)^l .
\end{equation}
%%%%%%%%%%%%%%%%%%%%%%%%%
%   only needed if we have a Ritz projection
%%%%%%%%%%%%%%%%%%%%%%%%%
%Satisfying the estimates below.
%\begin{lemma}[\cite{diss_Mansour} Lemma 6.3]
%\label{lemma: velocity estimate}
%    For $j\geq 0$ there exits a constant $c=c(k)>0$ independent of $t$ and $h$ such that
%    \begin{align*}
%        \|(\mat_{h})^{(j)} (v-\vhat) \|_{L^\infty(\Ga(X))} + h \|\nbg (\mat_{h})^{(j)}(v-\vhat)\|_{L^\infty(\Ga(X))} \leq c h^{k+1} .  ???
%    \end{align*}
%\end{lemma}
%\begin{proof}
%???
%\QED \end{proof}

\begin{lemma}{\bf (Transport properties \cite[Lemma~4.2]{DziukElliott_L2})}
\label{lemma: transport prop}
    For any $z(t),\vphi(t) \in H^1(\Ga(X(\cdot,t)))$,
    \begin{align*}
        \diff m(X;z,\vphi) =&\ m(X;\mat z,\vphi) + m(X;z,\mat \vphi) + q(X;v;z,\vphi) .
    %    \diff a(X;z,\vphi) =&\ a(X;\mat z,\vphi) + a(X;z,\mat \vphi) + b(X;v;z,\vphi) .
    \end{align*}
    \bbk 
    The same formulas hold when $\Ga(X)$ is considered as the lift of the discrete surface $\Ga(X_h^*)$ (i.e. $\Ga(X)$ can be decomposed into curved elements which are lifts of the elements of $\Ga(X_h^*)$), moving with the velocity $\vhat$:
    \ebk
    %when $\mat$ and $v$ are replaced by $\mat_{\vhat}$ and $\vhat$, respectively:
    \begin{align*}
        \diff m(X;z,\vphi) =&\ m(X;\mat_{\vhat} z,\vphi) + m(X;z,\mat_{\vhat} \vphi) + q(X;\vhat;z,\vphi) .
    %    \diff a(X;z,\vphi) =&\ a(X;\mat_{\vhat} z,\vphi) + a(X;z,\mat_{\vhat} \vphi) + b(X;\vhat;z,\vphi).
    \end{align*}
    Similarly, in the discrete case, for arbitrary $z_h(t), \vphi_h(t), \bbk \mat_{\Vs} z_h(t), \mat_{\Vs} \vphi_h(t) \ebk \in S_h(\xs(t))$ we have:
    \begin{align*}
        \diff m(\Xs;z_h,\vphi_h) =&\ m(\Xs;\mat_{\Vs} z_h,\vphi_h) + m(\Xs;z_h,\mat_{\Vs} \vphi_h) + q(\Xs;\Vs;z_h,\vphi_h) ,
    %    \diff a_h(\Xs;z_h,\vphi_h) =&\ a_h(\Xs;\mat_{\Vs} z_h,\vphi_h) + a_h(\Xs;z_h,\mat_{\Vs} \vphi_h) + b_h(\Xs;\Vs;z_h,\vphi_h) ,
    \end{align*}
    where $\Vs$ is the velocity of the surface $\Ga(\Xs)$.

  %  The same results hold in the vector valued case.
\end{lemma}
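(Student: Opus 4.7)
The plan is to reduce all three identities to a single classical Leibniz formula for integrals over moving surfaces, which is proved by pulling back to a fixed reference domain. For the first identity, I would parametrize $\Gamma(X(\cdot,t))$ over the initial surface $\Gamma^0$ via $p\mapsto X(p,t)$, and rewrite
\begin{equation*}
m(X;z,\varphi)=\int_{\Gamma(X(\cdot,t))} z\,\varphi = \int_{\Gamma^0} (z\circ X)(\varphi\circ X)\, J(\cdot,t)\, do_0,
\end{equation*}
where $J(\cdot,t)$ is the surface Jacobian of $X(\cdot,t)$. Differentiating under the integral on the fixed domain $\Gamma^0$ and using the standard identity $\partial_t J = \bigl((\nabla_{\Gamma(X)}\!\cdot v)\circ X\bigr)\,J$ — which follows from Jacobi's formula applied to the first fundamental form of the parametrization — together with the very definition $\partial^\bullet z = \partial_t(z\circ X)\circ X^{-1}$, yields the first identity. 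This is essentially the derivation found in \cite{DziukElliott_ESFEM,DziukElliott_L2} and I would cite it rather than reproduce it.

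For the second identity, the key observation is that $\Gamma(X(\cdot,t))$ may be reparametrized by the lift of the interpolating surface, namely by $(X_h^*)^l(\cdot,t):\Gamma_h^0\to\Gamma(t)$. This map is a diffeomorphism onto $\Gamma(t)$ for $h\le h_0$ by Lemma~\ref{lemma: geometric est}, and its time derivative is $\hat v$ by construction. Repeating the pull-back argument with $(X_h^*)^l$ in place of $X$ and $\hat v$ in place of $v$ reproduces the same Leibniz calculation verbatim, yielding the second identity. The piecewise nature of the new parametrization (curved elements instead of a single global chart) causes no difficulty: the formula holds on each curved lifted element and boundary contributions between neighbouring elements cancel because $z,\varphi$ are continuous across interfaces and the interfaces themselves are transported by the common velocity $\hat v$.

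The third, fully discrete, identity is obtained by exactly the same pull-back argument applied element-by-element on $\Gamma(\Xs(\cdot,t))$, using the parametrization $\Xs(\cdot,t):\Gamma_h^0\to\Gamma(\Xs(\cdot,t))$ whose time derivative is $\Vs$. Each element is a polynomial image of a flat reference simplex, so the Jacobian calculation is the same as in the smooth case; the hypothesis $z_h,\varphi_h,\mat_{\Vs}z_h,\mat_{\Vs}\varphi_h\in S_h(\xs)$ guarantees that the material derivatives appearing on the right-hand side are well defined piecewise, and continuity of the finite element functions across element boundaries — together with the fact that these boundaries are carried by $\Vs$ — makes the inter-element contributions cancel.

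There is no real obstacle: once the classical Leibniz identity $\partial_t J = (\nabla_\Gamma\!\cdot v)J$ is in hand, all three statements follow by choosing the appropriate parametrization ($X$, $(X_h^*)^l$, or $\Xs$) and the corresponding velocity ($v$, $\hat v$, or $\Vs$). The only point that deserves care is compatibility between the lift-based parametrization and the piecewise decomposition of $\Gamma(X)$ used in the second identity; this is secured by the transport property \eqref{eq: lift and material derivatives} which guarantees that $\hat v$ genuinely is the time-derivative of the lifted Lagrange points, so that the piecewise Leibniz computation assembles correctly to the global formula.
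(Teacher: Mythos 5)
The paper does not give its own proof of this lemma: it is stated as a citation of \cite[Lemma~4.2]{DziukElliott_L2}, and the derivation you sketch---pulling back each integral to a fixed reference domain, differentiating under the integral sign, and using $\partial_t J = (\nabla_\Gamma\cdot v)J$---is precisely the argument behind that cited result. Your three-case structure (parametrize by $X$, by $(X_h^*)^l$, or by $\Xs$, with velocities $v$, $\hat v$, $\Vs$ respectively) matches the standard proof, and the observation that \eqref{eq: lift and material derivatives} is what makes the second case assemble correctly from its piecewise description is the right thing to point out.

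One small remark: the pull-back derivation of the Leibniz formula never actually produces boundary terms on individual elements, because you differentiate the Jacobian on a \emph{fixed} reference element and no divergence theorem is invoked; so your discussion of inter-element cancellations, while not wrong (it is the picture one gets if one instead transports each element domain and invokes Reynolds' theorem with a flux term), is an unnecessary precaution. Provided each curved element is carried by the chosen velocity, the element-wise identity already has no boundary contributions, and summing over elements gives the global formula directly.
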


%\subsubsection*{Geometric perturbation errors}

The following estimates, proved in Lemma~5.6 of \cite{highorder}, will play a crucial role in the defect bounds later on.
\begin{lemma}[Geometric perturbation errors]
\label{lemma: geometric perturbation errors}
    For any $Z_h,\psi_h \in S_h(\xs)$ where $\Ga(\Xs)$ is the interpolation surface of piecewise polynomial degree $k$,  we have the following bounds, \bbk for $h\leq h_0$ with a sufficiently small $h_0>0$, \ebk
    \begin{align*}
        \big| m(X; Z_h^l,\vphi_h^l) - m(\Xs; Z_h,\vphi_h) \big| \leq&\ c h^{k+1} \|Z_h^l\|_{L^2(\Ga(X))} \|\vphi_h^l\|_{L^2(\Ga(X))}, \\
        \big| a(X; Z_h^l,\vphi_h^l) - a(\Xs; Z_h,\vphi_h) \big| \leq&\ c h^{k+1} \|\nbg Z_h^l\|_{L^2(\Ga(X))} \|\nbg \vphi_h^l\|_{L^2(\Ga(X))} , \\
        \big| q(X;\vhat; Z_h^l,\vphi_h^l) - q(\Xs;\Vs; Z_h,\vphi_h) \big| \leq&\ c h^{k+1} \|Z_h^l\|_{L^2(\Ga(X))} \|\vphi_h^l\|_{L^2(\Ga(X))} .
%        , \\
%        \big| b(X;\xi;v_h; Z_h^l,\vphi_h^l) - b_h(\Xs;v_h; Z_h,\vphi_h) \big| \leq&\ c h^{k+1} \|\nbg Z_h^l\|_{L^2(\Ga(X))} \|\nbg \vphi_h^l\|_{L^2(\Ga(X))}.
    \end{align*}
    The constant $c$ is independent of $h$ and $t\in[0,T]$.
%
%    The same results hold in the vector valued case.
\end{lemma}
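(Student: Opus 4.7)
The plan is to push every integral on the exact surface $\Gamma(X)$ back, via the lift $\ell:\Gamma(X_h^*)\to\Gamma(X)$, to an integral over the interpolating surface $\Gamma(X_h^*)$, so that each of the three bounds reduces to a pointwise $L^\infty$ estimate on a purely geometric factor. Recall the classical change-of-variables relations: under $\ell$, the surface measures are related by $\mathrm{d}\sigma=\delta_h\,\mathrm{d}\sigma_h$ for an area-distortion factor $\delta_h$, and tangent vectors transform as $\nabla_{\Gamma(X)} w^\ell = Q_h\,\nabla_{\Gamma_h^*} w$ for an explicit $3\times 3$ matrix $Q_h$ built from $\nu_{\Gamma(X)}$, $\nu_{\Gamma_h^*}$, the signed distance $d$, and the Weingarten map of $\Gamma(X)$; see \cite{Dziuk88,DziukElliott_L2,Demlow2009}. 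The three bounds will all follow by inserting these identities, isolating a geometric factor, and bounding it by $ch^{k+1}$ in $L^\infty$.

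\textbf{Mass and stiffness bounds.}
For the first inequality, the change of variables gives
\[
 m(X;Z_h^\ell,\varphi_h^\ell)-m(X_h^*;Z_h,\varphi_h)=\int_{\Gamma(X_h^*)}(\delta_h-1)\,Z_h\varphi_h,
\]
so after Cauchy--Schwarz and Lemma~\ref{lemma: equivalence of norms} the task reduces to showing $\|\delta_h-1\|_{L^\infty(\Gamma(X_h^*))}\le ch^{k+1}$. The same device turns the stiffness difference into an integral of $\nabla_{\Gamma_h^*}Z_h\cdot(\delta_h Q_h^T Q_h-P_{\Gamma_h^*})\nabla_{\Gamma_h^*}\varphi_h$, where $P_{\Gamma_h^*}$ is the tangential projection on $\Gamma(X_h^*)$, so it suffices to prove $\|\delta_h Q_h^T Q_h-P_{\Gamma_h^*}\|_{L^\infty}\le ch^{k+1}$. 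Both $L^\infty$ bounds are obtained by Taylor-expanding the explicit formulas for $\delta_h$ and $Q_h$ in the small parameter $d$, and exploiting the estimates $\|d\|_{L^\infty}\le ch^{k+1}$ and $\|\nu_{\Gamma(X)}-\nu_{\Gamma_h^*}^\ell\|_{L^\infty}\le ch^{k}$ from Lemma~\ref{lemma: geometric est}, together with the extra $h^{k+1}$ cancellation that is available on the component of $\nu_{\Gamma(X)}-\nu_{\Gamma_h^*}^\ell$ against tangent vectors of $\Gamma_h^*$.

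\textbf{The $q$-form.}
Here the idea is to use the defining property $\hat v((X_h^*)^\ell(\cdot,t),t)=\partial_t(X_h^*)^\ell(\cdot,t)$, which says that $\hat v$ is the velocity of the lifted Lagrange parametrization of $\Gamma(X)$, together with the lift commutation identity $\partial_{\hat v}^\bullet (\delta_h\circ\ell^{-1})=\bigl((\nabla_{\Gamma(X)}\cdot\hat v)-(\nabla_{\Gamma_h^*}\cdot V_h^*)\circ\ell^{-1}\bigr)\,(\delta_h\circ\ell^{-1})$. This identity comes from applying the Leibniz formula (Lemma~\ref{lemma: transport prop}) both on $\Gamma(X)$ moving with $\hat v$ and on $\Gamma(X_h^*)$ moving with $V_h^*$ to the same scalar test function transported along the Lagrange parametrization. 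Differentiating the Taylor expansion of $\delta_h$ in time then yields $\|\partial_t\delta_h\|_{L^\infty}\le ch^{k+1}$, which furnishes the pointwise comparison $\|(\nabla_{\Gamma(X)}\cdot\hat v)-(\nabla_{\Gamma_h^*}\cdot V_h^*)^\ell\|_{L^\infty}\le ch^{k+1}$; one more change of variables (which only costs a factor $\delta_h=1+O(h^{k+1})$) then gives the third estimate.

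\textbf{Main obstacle.}
The crucial technical point, shared by all three bounds, is the gain of one order in $h$ over the naive $O(h^k)$ normal estimate of Lemma~\ref{lemma: geometric est}: it is this higher-order cancellation for piecewise polynomial interpolants, in both $\delta_h-1$ and $\delta_h Q_h^T Q_h-P_{\Gamma_h^*}$ as well as their time derivatives, that forces $h^{k+1}$ on the right-hand side. This is precisely the content of the time-dependent geometric estimates collected in \cite{highorder}, extending the static analysis of \cite{Demlow2009}. Once that sharpened $L^\infty$ control is granted, the three inequalities of the lemma follow at once from Cauchy--Schwarz and Lemma~\ref{lemma: equivalence of norms}.
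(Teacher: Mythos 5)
The paper does not prove this lemma itself; it cites Lemma 5.6 of \cite{highorder}, which in turn extends the static geometric estimates of Dziuk--Elliott \cite{DziukElliott_L2} and Demlow \cite{Demlow2009} to the time-dependent, higher-order setting. Your proof sketch reproduces exactly that classical strategy — pull each bilinear form back to the interpolating surface via the lift, reduce the $m$- and $a$-differences to $L^\infty$ bounds on the area distortion $\delta_h-1$ and on the transported-metric matrix, and obtain those bounds from $\|d\|_{L^\infty}=O(h^{k+1})$ and $\|\nu-\nu_h^l\|_{L^\infty}=O(h^k)$ via the explicit formulas; for the $q$-form, differentiate the transformed mass relation in time using the two Leibniz rules of Lemma~\ref{lemma: transport prop} to produce the identity $\partial^\bullet_h\delta_h = \delta_h\bigl[(\nabla_{\Gamma(X)}\cdot\hat v)^{-l} - \nabla_{\Gamma_h^*}\cdot V_h^*\bigr]$, and bound $\partial^\bullet_h\delta_h$. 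This is the intended proof route, and your sketch is correct in outline.

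Two points deserve a sharper statement than you give them. First, the gain from $h^k$ to $h^{k+1}$ in the mass and stiffness bounds does not hinge on a special cancellation of the tangential component of $\nu-\nu_h^l$; it comes from the quadratic structure of the normal contribution: in the expansion of $\delta_h$ and of the metric matrix one encounters $\nu\cdot\nu_h - 1 = -\tfrac12|\nu-\nu_h|^2 + O(|\nu-\nu_h|^4) = O(h^{2k})$ and $|P_\Gamma\nu_h|=|P_\Gamma(\nu_h-\nu)|=O(h^k)$ entering only squared, so the normal error contributes at order $h^{2k}\le h^{k+1}$ and the dominant term is $O(d)=O(h^{k+1})$ from the Weingarten/curvature factors. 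Second, when you claim $\|\partial_t\delta_h\|_{L^\infty}\le ch^{k+1}$ by "differentiating the Taylor expansion," you need the time-uniform versions of $\|d\|_{L^\infty}=O(h^{k+1})$ and of the interpolation bounds together with bounds on $\partial_t d$ and $\partial_t\nu_h$, which is precisely the extra content of the time-dependent analysis in \cite{highorder}; this step is more than a routine corollary of the static estimates and should be flagged as such. With those two clarifications the argument is sound and faithful to the paper's intent.
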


\subsection{Interpolation error estimates for evolving surface finite element  functions}

For any $u\in H^{k+1}(\Ga(X))$, there is a unique piecewise polynomial surface finite element interpolation of degree~$k$ in the nodes $x_j^*$, denoted by $\widetilde I_h u \in S_h(\xs)$. We set $ I_h u :=(\widetilde I_h u)^l   : \Ga(X) \to \R$. Error estimates for this interpolation are obtained from \cite[Proposition 2.7]{Demlow2009} by carefully studying the time dependence of the constants, cf.\ \cite{highorder}.

%%%%%%%%%%%%%%%%%%%%%%%%%%%%%%%%%%%%%%%%%%%%%%%%%%%%%%%%%%%%%%%%
%%  local definition
\newcommand{\Ih}{\widetilde{I}_h}
%%%%%%%%%%%%%%%%%%%%%%%%%%%%%%%%%%%%%%%%%%%%%%%%%%%%%%%%%%%%%%%%
\begin{lemma}
\label{lemma: interpolation error}
    There exists a constant $c>0$ independent of \bbk $h \leq h_0$, with a sufficiently small $h_0>0$, \ebk and $t$ such that for $u(\cdot,t)\in H^{k+1}(\Ga(t))$, for $0\leq t \leq T$, 
    \begin{align*}
        \|u - I_{h} u \|_{L^{2}(\Ga(X))} + h \| \nbg( u - I_{h} u) \|_{L^{2}(\Ga(X))} \leq&\ c h^{k+1} \|u\|_{H^{k+1}(\Ga(X))} .
    \end{align*}
\end{lemma}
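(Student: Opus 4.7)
The plan is to reduce the claim to the static interpolation estimate of Demlow \cite{Demlow2009} (Proposition~2.7) applied at each fixed time $t\in[0,T]$, and then to track the time-dependence of the constant in order to show that it can be taken uniform on the compact interval $[0,T]$.

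At each frozen time $t$, the surface $\Ga(t)=\Ga(X(\cdot,t))$ is a smooth closed surface, and the interpolated surface $\Ga_h^*(t)=\Ga(X_h^*(\cdot,t))$ is its piecewise-polynomial degree-$k$ interpolation through the Lagrange nodes $x_j^*(t)=X(p_j,t)$. For each such $t$, Demlow's estimate gives
\begin{equation*}
 \|u(\cdot,t) - I_h u(\cdot,t)\|_{L^2(\Ga(t))} + h\,\|\nbg(u - I_h u)(\cdot,t)\|_{L^2(\Ga(t))}
 \le c(t)\, h^{k+1}\, \|u(\cdot,t)\|_{H^{k+1}(\Ga(t))},
\end{equation*}
where $c(t)$ depends only on: (a) the shape-regularity and quasi-uniformity constants of the pulled-back triangulation of $\Ga(t)$; (b) the $C^{k+1}$ bounds for the chart patches parametrizing $\Ga(t)$; and (c) the norm-equivalence constants between $\Ga(t)$ and its interpolant $\Ga_h^*(t)$ provided by Lemma~\ref{lemma: equivalence of norms} and the geometric bounds of Lemma~\ref{lemma: geometric est}.

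The key step is then to argue that each of these three groups of constants is bounded uniformly for $t\in[0,T]$. For (a), this follows from the hypothesis that $X(\cdot,t):\Ga^0\to\Ga(t)$ is smooth and non-degenerate on $[0,T]$: the Jacobian $DX(\cdot,t)$ and its inverse are continuous functions of $t$ on the compact set $\Ga^0\times[0,T]$, and therefore uniformly bounded, so that admissibility of the initial triangulation is preserved uniformly in $t$. For (b), the assumed regularity of the flow map ensures a uniform $C^{k+1}$ bound for local parametrizations of $\Ga(t)$. For (c), the constants in Lemmas~\ref{lemma: equivalence of norms}–\ref{lemma: geometric est} are already known to be independent of $t\in[0,T]$ (this was the point of the extensions to the time-dependent setting established in \cite{highorder}). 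Taking the supremum $c:=\sup_{t\in[0,T]} c(t)<\infty$ then yields the stated inequality with a $t$-independent constant.

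The main obstacle is the bookkeeping in (b) and (c): one has to verify that Demlow's scaling argument, which maps each curved element to a reference flat triangle via a degree-$k$ polynomial chart, produces Jacobian and metric-tensor bounds that depend continuously on $t$ through $X(\cdot,t)$ and its derivatives. Once this continuous dependence is in place, compactness of $[0,T]$ does the rest. Since the detailed time-dependence analysis has already been carried out in \cite{highorder} (the lemma explicitly invokes this reference), the present proof can be kept short: apply Proposition~2.7 of \cite{Demlow2009} pointwise in $t$, and cite \cite{highorder} for the uniform bound on the resulting constant.
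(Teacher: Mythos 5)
Your proposal takes the same route as the paper: the paper's ``proof'' of this lemma is in fact just the sentence preceding it, which cites \cite[Proposition 2.7]{Demlow2009} for the static estimate and defers the uniformity of constants in $t$ to \cite{highorder}. You have simply unpacked that citation into its constituent steps --- applying Demlow at each frozen $t$, then arguing that the shape-regularity constants, the $C^{k+1}$ chart bounds, and the norm-equivalence and geometric constants of Lemmas~\ref{lemma: equivalence of norms}--\ref{lemma: geometric est} are all uniform on $[0,T]$ by smoothness and non-degeneracy of the flow map together with compactness of the time interval. This is precisely what ``carefully studying the time dependence of the constants'' means here, so the argument is correct and coincides with the paper's intended one.
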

The same result holds for vector valued functions. As it will always be clear from the context we do not distinguish between interpolations for scalar and vector valued functions.

\section{Defect bounds}
\label{section: defect bounds}

In this section we show that the assumed defect estimates of Proposition~\ref{proposition: stability - uncoupled problem} and \ref{proposition: stability - coupled problem} are indeed fulfilled when the projection $\Pi_h$ is chosen to be the piecewise $k$th-degree polynomial interpolation operator $I_h$ for $k\geq2$.

The interpolations satisfy the discrete problem \eqref{uh-vh-equation}--\eqref{xh} only up to some defects. These defects are denoted by $d_u \in S_h(\xs), d_v \in S_h(\xs)^3$, with $\xs(t)$  the vector of exact nodal values $x_j^*(t)=X(p_j,t)\in\Gamma(t)$, and are given as follows: for all $\vphi_h \in S_h(\xs)$ with $\mat_{\Vs} \vphi_h =0$ and $\psi_h \in S_h(\xs)^3$,
\begin{equation*}
    \begin{aligned}
        \int_{\Ga_h(\xs)}\!\!\!  d_u \vphi_h =&\ \diff \int_{\Ga_h(\xs)} \!\! \Ih u\, \vphi_h
        + \int_{\Ga_h(\xs)}\!\!\! \nabla_{\Ga_h(\xs)} \Ih u \cdot \nabla_{\Ga_h(\xs)}  \vphi_h \\
        - &\ \int_{\Ga_h(\xs)} f(\Ih u,\nabla_{\Ga_h(\xs)} \Ih u)\,\varphi_h , \\
        \int_{\Ga_h(\xs)} \!\!\! d_v \cdot \psi_h =&\ \int_{\Ga_h(\xs)}\!\! \Ih v \cdot \psi_h
        + \alpha \int_{\Ga_h(\xs)} \!\!\! \nabla_{\Ga_h(\xs)}  \Ih v \cdot \nabla_{\Ga_h(\xs)}  \psi_h \\
        - &\ \int_{\Ga_h(\xs)}  g(\Ih u,\nabla_{\Ga_h(\xs)} \Ih u) \,\nu_{\Ga_h(\xs)} \cdot \psi_h .
    \end{aligned}
%    \begin{aligned}
%        \diff \int_{\Ga_h(\xs)} \!\! \Ih u \vphi_h + &\ \int_{\Ga_h(\xs)}\!\!\! \nabla_{\Ga_h(\xs)} \Ih u \cdot \nabla_{\Ga_h(\xs)}  \vphi_h \\
%        =& \ \int_{\Ga_h(\xs)} f(\Ih u,\nabla_{\Ga_h(\xs)} \Ih u)\,\varphi_h + \int_{\Ga(\xs)}\!\!\!\!\!\!  d_\bfu \vphi_h, \\
%        \int_{\Ga_h(\xs)}\!\! \Ih v \cdot \psi_h + &\ \alpha \int_{\Ga_h(\xs)} \!\!\! \nabla_{\Ga_h(\xs)}  \Ih v \cdot \nabla_{\Ga_h(\xs)}  \psi_h \\
%        =&\ \int_{\Ga_h(\xs)}  g(\Ih u,\nabla_{\Ga_h(\xs)} \Ih u) \,\nu_{\Ga_h(\xs)} \cdot \psi_h + \int_{\Ga(\xs)} \!\!\!\!\!\! d_\bfv \cdot \psi_h,
%    \end{aligned}
%    \begin{aligned}
%        \diff \int_{\Ga(\Xs)} \!\!\!\! \Ih u \vphi_h + \int_{\Ga(\Xs)} \!\!\!\!\!\! \nbgh \Ih u \cdot \nbgh \vphi_h
%        =&\ \int_{\Ga(\Xs)}\!\!\!\!\!\!  d_\bfu \vphi_h \\ % &\quad& (\forall \vphi_h \in S_h(\xs))\\
%        \int_{\Ga(\Xs)} \!\!\!\! \Ih v \cdot \psi_h + \alpha \! \int_{\Ga(\Xs)} \!\!\!\!\!\! \nbgh \Ih v \cdot \nbgh \psi_h =&\ \int_{\Ga(\Xs)} \!\!\!  g^{-l}(I_h u, \nbgh I_h u) \nu_{\Ga(\Xs)} \cdot \psi_h + \int_{\Ga(\Xs)} \!\!\!\!\!\! d_\bfv \cdot \psi_h, \\ %&\quad& (\forall \psi_h \in S_h(\xs)) \\
%    \end{aligned}
\end{equation*}
%While for the nodes the following ODE holds
%\begin{equation*}
%    \dx = \partial_t \Ih X (p ,t) - \Ih v(\Ih X(p,t),t), \qquad p\in\Ga^0 .
%\end{equation*}
Later on the vectors of nodal values of the defects $d_u$ and $d_v$ are denoted by $\du \in \R^N$ and $\dv\in\R^{3N}$, respectively. These vectors 
%, together with $\dx=\bfzero$,
satisfy \eqref{eq: DAE form - coupled - solution}.

\begin{lemma}
\label{lemma: semidiscrete residual}
    Let the solution $u$, the surface $X$ and its velocity $v$ be all sufficiently smooth. Then there exists a constant $c>0$ such that for \bbk all $h\leq h_0$, with a sufficiently small $h_0>0$, \ebk and for all $t\in[0,T]$, the  defects $d_u$ and $d_v$ of the $k$th-degree finite element interpolation
%    for $k\geq2$,
    are bounded as
    \begin{align*}
        \|\du\|_{\star,\xs}=&\ \|d_u\|_{H_h\inv(\Ga(\Xs))} \leq c h^k , \\
        \|\dv\|_{\star,\xs}=&\ \|d_v\|_{H_h\inv(\Ga(\Xs))} \leq c h^k ,
    \end{align*}
    where the $H_h\inv$-norm is defined in \eqref{eq: star and H^-1 norm equality}. The constant $c$ is independent of $h$ and $t\in[0,T]$.
\end{lemma}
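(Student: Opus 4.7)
My plan is to prove both defect bounds separately but by the same two-step strategy: lift everything to the exact surface $\Ga(X)$, use the continuous weak formulation to cancel the principal parts, and control what is left by the interpolation estimate of Lemma~\ref{lemma: interpolation error} and the geometric consistency estimates of Lemma~\ref{lemma: geometric perturbation errors}. Throughout, for $\vphi_h\in S_h(\xs)$ (resp.\ $\psi_h\in S_h(\xs)^3$) I denote by $\vphi_h^l$ (resp.\ $\psi_h^l$) its lift to $\Ga(X)$, which by Lemma~\ref{lemma: equivalence of norms} satisfies $\|\vphi_h^l\|_{H^1(\Ga(X))}\sim \|\vphi_h\|_{H^1(\Ga(\Xs))}$. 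The dual norm \eqref{eq: star and H^-1 norm equality} is then estimated by duality against such test functions.

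\textbf{The velocity defect $d_v$ (easier case).} Testing the definition of $d_v$ against a lifted $\psi_h^l$, Lemma~\ref{lemma: geometric perturbation errors} converts $m(\Xs;\Ih v,\psi_h)$ and $a(\Xs;\Ih v,\psi_h)$ into $m(X;I_h v,\psi_h^l)$ and $a(X;I_h v,\psi_h^l)$ with $O(h^{k+1})$ error measured in the $L^2\times L^2$ and $H^1\times H^1$ norms. For the $g$-term I similarly rewrite the integral over $\Ga_h(\xs)$ as an integral over $\Ga(X)$, paying an $O(h^{k+1})$ price via the bound $\|\nu_{\Ga}-\nu_{\Ga_h^*}^l\|_{L^\infty}\le ch^k$ of Lemma~\ref{lemma: geometric est} (together with the element-transformation estimate implicit in Lemma~\ref{lemma: geometric perturbation errors}). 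Then, subtracting the exact weak equation \eqref{eq: coupled problem - weak form} tested against $\psi_h^l$, the remaining terms are $m(X;I_h v-v,\psi_h^l)$, $\alpha\,a(X;I_h v-v,\psi_h^l)$, and the nonlinear residual $g(I_h u,\nabla I_h u)-g(u,\nabla u)$ paired with $\nu_\Ga\cdot\psi_h^l$. All these are controlled by $ch^k\|\psi_h^l\|_{H^1(\Ga(X))}$ via Lemma~\ref{lemma: interpolation error} and the local Lipschitz continuity of $g$.

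\textbf{The parabolic defect $d_u$ (main obstacle).} The complication is the time derivative: only test functions with $\mat_{\Vs}\vphi_h=0$ enter the defining formula. The plan is to apply Lemma~\ref{lemma: transport prop} on both surfaces. Since $\mat_{\Vs}\vphi_h=0$ implies $\mat_{\vhat}\vphi_h^l=0$ by \eqref{eq: lift and material derivatives}, I rewrite both $\diff m(\Xs;\Ih u,\vphi_h)$ and $\diff m(X;I_h u,\vphi_h^l)$ using the $\vhat$-version of the transport formula, thereby exchanging the time derivative for a material derivative on the first argument plus a tangential-divergence term. Lemma~\ref{lemma: geometric perturbation errors} then bounds the differences by $ch^{k+1}$ times the appropriate norms. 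Once this reduction is done, I subtract the exact weak form tested against $\vphi_h^l$; here the \emph{only} subtlety is that the exact weak form uses $\mat\vphi=0$ whereas $\mat_{\vhat}\vphi_h^l=0$, but the discrepancy $\mat\vphi_h^l=(v-\vhat)\cdot\nabla\vphi_h^l$ introduces a harmless $m(X;u,(v-\vhat)\cdot\nabla\vphi_h^l)$ term, in which $\|v-\vhat\|_{L^\infty(\Ga(X))}\le ch^{k+1}$ by the interpolation estimate applied to the velocity. What remains is once again an expression in $u-I_h u$ and $\nabla(u-I_h u)$ together with the Lipschitz-type difference $f(I_h u,\nabla I_h u)-f(u,\nabla u)$, all controlled by Lemma~\ref{lemma: interpolation error} so as to yield the bound $ch^k\|\vphi_h^l\|_{H^1(\Ga(X))}$.

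\textbf{Wrap-up.} Taking the supremum over test functions and using the norm equivalence of Lemma~\ref{lemma: equivalence of norms} converts the $H^{-1}(\Ga(X))$ bounds into the discrete $H_h^{-1}(\Ga(\Xs))$ bounds asserted in the lemma, i.e.\ $\|\du\|_{\star,\xs},\|\dv\|_{\star,\xs}\le ch^k$, with constants uniform in $t\in[0,T]$ thanks to the uniform-in-$t$ constants built into Lemmas~\ref{lemma: geometric est}--\ref{lemma: interpolation error}. The main obstacle is purely bookkeeping, namely the simultaneous handling of three distinct material derivatives ($\mat$, $\mat_{\vhat}$, $\mat_{\Vs}$) on three distinct surfaces; the actual quantitative bounds come directly from the geometric and interpolation lemmas.
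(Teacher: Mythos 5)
Your proposal follows essentially the same route as the paper: characterize $\|\cdot\|_{\star,\xs}$ by duality against lifted test functions, eliminate the time derivative via the transport formulas of Lemma~\ref{lemma: transport prop} on $\Ga(\Xs)$ (with $\Vs$) and on $\Ga(X)$ (with $\vhat$), subtract the continuous weak form, and bound the surviving pairwise differences by the geometric perturbation estimates of Lemma~\ref{lemma: geometric perturbation errors} together with the interpolation estimate of Lemma~\ref{lemma: interpolation error}, finally converting back to $\Ga(\Xs)$ via Lemma~\ref{lemma: equivalence of norms}. The structure, the lemmas invoked, and the pairwise grouping of the mass, stiffness, advection and nonlinearity terms are all as in the paper.

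The one genuine point of divergence is a point in your favour: you explicitly flag the mismatch between $\mat_v\vphi_h^l$ (the constraint under which the weak form~\eqref{eq: coupled problem - weak form} is stated) and $\mat_{\vhat}\vphi_h^l=0$ (what the lifted test function actually satisfies), and you isolate the resulting extra term $m(X;u,(v-\vhat)\cdot\nabla_{\Ga}\vphi_h^l)$. The paper instead writes $0=\diff m(X;u,\vphi_h^l)+a(X;u,\vphi_h^l)-m(X;f,\vphi_h^l)$ directly and expands it with the $\vhat$-transport formula, effectively stating the weak form in the $\vhat$-parametrization and silently absorbing the $v$-versus-$\vhat$ discrepancy into the geometric estimates. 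Your treatment is more explicit. One caveat: you justify $\|v-\vhat\|_{L^\infty(\Ga(X))}\le ch^{k+1}$ ``by the interpolation estimate applied to the velocity,'' but $\vhat$ is the velocity of the lifted interpolated surface $(\Xs)^l$, not the interpolant $I_h v$, so Lemma~\ref{lemma: interpolation error} does not directly give this; the bound is correct but rests on the geometric velocity estimates underlying Lemma~\ref{lemma: geometric perturbation errors} (cf.\ \cite{DziukElliott_L2,highorder}). A second minor slip: in the $d_v$ part you describe the price for rewriting the $g$-term as $O(h^{k+1})$ while quoting the normal-vector bound $\|\nu_\Ga-\nu_{\Ga_h^*}^l\|_{L^\infty}\le ch^k$; the latter gives $O(h^k)$, which is what enters the final estimate anyway. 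Neither caveat affects the conclusion.
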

\begin{proof}
(i) We start from an identity for the dual norm as in \eqref{eq: star and H^-1 norm equality}, (omitting the  argument $\xs$ of the matrices):
\begin{equation*}
    \begin{aligned}
        \|\du\|_{\star,\xs} =&\ (\du^T \bfM(\bfM + \bfA)\inv \bfM\du )^\half
%        = \| (\bfM + \bfA)^{-\half} \bfM\du\|_2 \\
%        =&\ \sup_{0\neq \bfw \in \R^N} \frac{\du^T \bfM(\bfM + \bfA)^{-\half} \bfw}{(\bfw^T \bfw)^\half}
%        = \sup_{0\neq \bfz \in \R^N} \frac{\du^T \bfM \bfz}{(\bfz^T (\bfM + \bfA) \bfz)^\half}\\
%        =&\ \sup_{0\neq \vphi_h \in S_h(\Xs)} \frac{\int_{\Ga(\Xs)} d_\bfu \vphi_h}{\|\vphi_h\|_{H^1(\Ga(\Xs))}}
        =\|d_u\|_{H_h\inv(\Ga(\Xs))}  .
    \end{aligned}
\end{equation*}
%we remind here that $d_\bfu$ is the finite element function corresponding to $\du$.

In order to estimate the defect in $u$, we subtract \eqref{eq: coupled problem - weak form} from the above equation, and perform almost the same proof as in \cite[Section~7]{DziukElliott_L2}.
We use the bilinear forms and the discrete versions of the transport properties from Lemma~\ref{lemma: transport prop}. We obtain, for any $\vphi_h \in S_h(\xs)$ with $\mat_{\Vs} \vphi_h =0$,
\begin{align*}
    m(\Xs ; d_u,\vphi_h) =&\ \diff m(\Xs ; \Ih u,\vphi_h) + a(\Xs ; \Ih u,\vphi_h) \\
    &\ - m(\Xs ; f(\Ih u,\nbgh \Ih u),\vphi_h) \\
    =&\  m(\Xs ; \mat_{\Vs} \Ih u,\vphi_h) + q(\Xs ;\Vs; \Ih u, \vphi_h) + a(\Xs ; \Ih u,\vphi_h) \\
    &\ - m(\Xs ; f(\Ih u,\nbgh \Ih u),\vphi_h) ,
\end{align*}
and
\begin{align*}
    0 =&\ \diff m(X ; u,\vphi_h^l) + a(X ; u,\vphi_h^l) - m(X ; f(u,\nabla_{\Ga(X)} u),\vphi_h^l) \\
    =&\ m(X ; \mat_{\vhat} u,\vphi_h^l) + q(X;\vhat;u,\vphi_h^l) + a(X ; u,\vphi_h^l) - m(X ; f(u,\nabla_{\Ga(X)} u),\vphi_h^l).
\end{align*}
Subtracting the two equation yields
\begin{equation*}
%\label{eq: residual equality - u}
    \begin{aligned}
        m(\Xs ; d_u,\vphi_h)
%        =&\ \diff \Big(m_h(\Xs ; \Ih u, \vphi_h) - m(X;u,\vphi_h^l)\Big) \\
%        &\ + \Big(a_h(\Xs ; \Ih u, \vphi_h) - a(X;u,\vphi_h^l)\Big) , \\
        =&\ m(\Xs ; \mat_{\Vs} \Ih u, \vphi_h) - m(X;\mat_{\vhat} u,\vphi_h^l) \\
        &\ + q(\Xs ;\Vs; \Ih u, \vphi_h) - q(X;\vhat;u,\vphi_h^l) \\
        &\ + a(\Xs ; \Ih u, \vphi_h) - a(X;u,\vphi_h^l) \\
        &\ - \Big(m(\Xs ; f(\Ih u,\nbgh \Ih u), \vphi_h) - m(X;f(u,\nbg u),\vphi_h^l)\Big) .
    \end{aligned}
\end{equation*}
We bound all the terms pairwise, by using the interpolation estimates of Lemma~\ref{lemma: interpolation error} and the estimates for the geometric perturbation errors of the bilinear forms of Lemma~\ref{lemma: geometric perturbation errors}. For the first pair, using that $(\mat_{\Vs} \Ih u)^l = \mat_{\vhat} I_h u$, we obtain
\begin{align*}
        \big|m(\Xs ; \mat_{\Vs} \Ih u, \vphi_h) - m(X;\mat_{\vhat} u,\vphi_h^l)\big| \leq &\
        \big|m(\Xs ; \mat_{\Vs} \Ih u, \vphi_h) - m(X;\mat_{\vhat} I_h u,\vphi_h^l)\big| \\
        &\ + \big|m(X;I_h \mat_{\vhat} u - \mat_{\vhat} u,\vphi_h^l)\big| \\
        \leq &\ c h^{k+1} \|\vphi_h^l\|_{L^2(\Ga(X))} .
\end{align*}
For the second pair we obtain
\begin{align*}
    \big|q(\Xs ;\Vs; \Ih u, \vphi_h) - q(X;\vhat;u,\vphi_h^l)\big| \leq &\
    \big|q(\Xs ;\Vs; \Ih u, \vphi_h) - q(X;\vhat;I_h u,\vphi_h^l)\big| \\
    &\ + \big|q(X ;\Vs; I_h u - u, \vphi_h)\big| \\
    \leq &\ c h^{k+1} \|\vphi_h^l\|_{L^2(\Ga(X))} .
\end{align*}
The third pair is estimated by
\begin{align*}
    \big|a(\Xs ; \Ih u, \vphi_h) - a(X;u,\vphi_h^l) \big| \leq &\
    \big| a(\Xs ; \Ih u, \vphi_h) - a(X; I_h u,\vphi_h^l) \big| \\
    &\ + \big| a(X; I_h u - u,\vphi_h^l)\big| \\
    \leq &\ c h^k \|\nbg \vphi_h^l\|_{L^2(\Ga(X))} .
\end{align*}
For the last pair we use the fact that $(f(u,\nbg u))^{-l}=f(u^{-l},(\nbg u)^{-l})$ and the local Lipschitz continuity of the function $f$, to obtain
\begin{align*}
    &\  \big|m(\Xs ; f(\Ih u,\nbgh \Ih u) , \vphi_h) - m(X;f(u,\nbg u) , \vphi_h^l)\big|\\
    \leq &\ \big| m(\Xs ; f(\Ih u,\nbgh \Ih u) - f(u^{-l},(\nbg u)^{-l}) , \vphi_h) \big|\\
    &\ + \big| m(\Xs ; f(u,\nbg u)^{-l} , \vphi_h) - m(X;f(u,\nbg u) , \vphi_h^l)\big| \\
    \leq &\ c \|f(\Ih u,\nbgh \Ih u) - f(u^{-l},(\nbg u)^{-l})\|_{L^2(\Ga(\Xs))} \|\vphi_h^l\|_{L^2(\Ga(X))} \\
    &\ + ch^{k+1} \|\vphi_h^l\|_{L^2(\Ga(X))} .
\end{align*}
The first term is estimated, using the local Lipschitz continuity of $f$ and equivalence of norms, by
\begin{align*}
    &\ \|f(\Ih u,\nbgh \Ih u) - f(u^{-l},(\nbg u)^{-l})\|_{L^2(\Ga(\Xs))} \\
    \leq &\ \|f\|_{W^{1,\infty}} \Big( c\|I_h u - u\|_{L^2(\Ga(X))} + c\|\nbg (I_h u - u)\|_{L^2(\Ga(X))} \\ &\ +  c\|(\nbgh u^{-l})^l - \nbg u\|_{L^2(\Ga(X))}\Big) ,
\end{align*}
where the first two terms are bounded by $O(h^k)$ using interpolation estimates, while the third term is bounded, using Remark~4.1 in \cite{DziukElliott_L2} and Lemma~\ref{lemma: geometric est}, as
\begin{align*}
    \|(\nbgh u^{-l})^l - \nbg u\|_{L^2(\Ga(X))}
    \leq &\ ch^{k}.
\end{align*}
%\begin{align*}
%    \|(\nbgh u^{-l})^l - \nbg u\|_{L^2(\Ga(X))} = &\ \|\big(\pr - (\pr_{\Ga(\Xs)}(I - d\wein))\pr\big)^l \nbg u\|_{L^2(\Ga(X))} \\
%    \leq &\ \|(\pr - \pr_{\Ga(\Xs)})^l\pr^l \nbg u\|_{L^2(\Ga(X))} + ch^{k+1} \\
%    \leq &\ ch^{k},
%\end{align*}
%where the last estimate follows, using Lemma~\ref{lemma: geometric est}, from
%\begin{align*}
%    \pr - \pr_{\Ga(\Xs)} =&\ (I-\nu\nu^T) - (I-\nu_{\Ga(\Xs)}\nu_{\Ga(\Xs)}^T) \\
%    =&\ (\nu_{\Ga(\Xs)} - \nu)\nu_{\Ga(\Xs)}^T - \nu(\nu_{\Ga(\Xs)}-\nu)^T = O(h^k) .
%\end{align*}

Thus for the fourth pair we obtained
\begin{equation*}
    \big|m(\Xs ; f(\Ih u,\nbgh \Ih u) , \vphi_h) - m(X;f(u,\nbg u) , \vphi_h^l)\big| \leq ch^{k} \|\vphi_h^l\|_{L^2(\Ga(X))} .
\end{equation*}

Altogether, we have
\begin{equation*}
    m(\Xs ; d_u,\vphi_h) \leq c h^k \|\vphi_h^l\|_{H^1(\Ga(X))} ,
\end{equation*}
which, by the equivalence of norms given by Lemma \ref{lemma: equivalence of norms}, shows the first bound of the stated lemma.

%%%%%%%%%%%%%%%%%%%%%%%%%%%
%(ii)
%For the second bound: in \eqref{eq: star and H^-1 norm equality} we showed that the dual norm equality holds% (omitting the fixed $\xs$ argument of the matrices):
%\begin{equation*}
%%\label{eq: star and H^-1 norm equality}
%    \begin{aligned}
%        \|\dv\|_{\star,\xs}
%%        =&\ (\dv^T \bfM\bfK\inv \bfM\dv )^\half = \| \bfK^{-\half} \bfM \dv\|_2 \\
%%        =&\ \sup_{0\neq \bfw \in \R^N} \frac{\dv^T \bfM \bfK^{-\half} \bfw}{(\bfw^T \bfw)^\half}
%%        = \sup_{0\neq \bfz \in \R^N} \frac{\dv^T \bfM \bfz}{(\bfz^T \bfK \bfz)^\half}\\
%%        =&\ \sup_{0\neq \psi_h \in S_h(X_h)} \frac{\int_{\Ga_h(X_h)} d_\bfv \psi_h}{\|\psi_h\|_{H^1(\Ga_h(X_h))}}
%        =\|d_\bfv\|_{H_h\inv(\Ga_h(X_h))}  .
%    \end{aligned}
%\end{equation*}
%We remind here that $d_\bfv$ is the finite element function corresponding to $\dv$.

(ii) In order to estimate the defect in $v$, similarly as previously we subtract \eqref{eq: coupled problem - weak form} from the above equation and use the bilinear forms to obtain
\begin{align*}
    &\ m(\Xs ; d_\bfv,\psi_h) \\
    &\ = m(\Xs ; \Ih v, \psi_h) - m(X;v,\psi_h^l) \\
    &\ \qquad + \alpha \Big(a(\Xs ; \Ih v, \psi_h) - a(X;v,\psi_h^l)\Big) \\
    &\ \qquad + m(\Xs ; g(\Ih u,\nbgh \Ih u) \nu_{\Ga(\Xs)},\psi_h) - m(X;g(u,\nbg u) \nu_{\Ga(X)}, \psi_h^l) .
\end{align*}
Similarly as in the previous part, these three pairs are bounded pairwise.  For the first pair we have
\begin{align*}
    |m(\Xs ; \Ih v, \psi_h) - m(X;v,\psi_h^l)
    |\leq &\ | m(\Xs ; \Ih v, \psi_h) - m(X;I_h v,\psi_h^l) | \\
    &\ +| m(X;I_h v - v ,\psi_h^l) | \\
    \leq &\ c h^{k+1} \|\psi_h^l\|_{L^2(\Ga(X))} .
\end{align*}
For the second pair we use the interpolation estimate to bound
\begin{align*}
    |a(\Xs ; \Ih v, \psi_h) - a(X;v,\psi_h^l) |\leq&\ | a(\Xs ; \Ih v, \psi_h) -  a(X;I_h v,\psi_h^l)| \\
    &\ +| a(X;I_h v - v,\psi_h^l) | \\
    \leq &\ c h^k \|\nbg \psi_h^l\|_{L^2(\Ga(X))} .
\end{align*}
The third pair we estimate, similarly to the nonlinear pair above, by
\begin{align*}
    &\  |m(\Xs ; g(\Ih u,\nbgh \Ih u) \nu_{\Ga(\Xs)},\psi_h) - m(X;g(u,\nbg u) \nu_{\Ga(X)}, \psi_h^l)|\\
    \leq &\ | m(\Xs ; (g(\Ih u,\nbgh \Ih u) - g(u,\nbg u)^{-l} ) \nu_{\Ga(\Xs)}, \psi_h)|\\
    &\ + | m(\Xs ; g(u,\nbg u)^{-l} ( \nu_{\Ga(\Xs)} -\nu_{\Ga(X)}^{-l} ), \psi_h)|\\
    &\ + | m(\Xs ; g(u,\nbg u)^{-l} \nu_{\Ga(X)}^{-l}, \psi_h) - m(X;g(u,\nbg u) \nu_{\Ga(X)}, \psi_h^l) | \\
    \leq &\ c h^{k} \|g\|_{W^{1,\infty}} \|\psi_h^l\|_{L^2(\Ga(X))} + c \|\nbg(X - \Xs)\|_{L^2(\Ga(X))} \|\psi_h^l\|_{L^2(\Ga(X))} \\
    &\ + ch^{k+1} \|g\|_{L^2} \|\psi_h^l\|_{L^2(\Ga(X))} \\
    \leq &\ c h^{k} \|g\|_{W^{1,\infty}} \|\psi_h^l\|_{L^2(\Ga(X))} + ch^k \|\psi_h^l\|_{L^2(\Ga(X))} \\
    \leq &\ c h^k \|\psi_h^l\|_{L^2(\Ga(X))} ,
\end{align*}
where we have used the local Lipschitz boundedness of the function $g$, the interpolation estimate, Lemma~\ref{lemma: geometric est}, and Lemma~\ref{lemma: geometric perturbation errors}, through a similar argument as above for the semilinear term with $f$.

Finally, the combination of these bounds yields
\begin{equation*}
    m(\Xs ; d_v,\psi_h) \leq c h^k \|\psi_h^l\|_{H^1(\Ga(X))} ,
\end{equation*}
providing the asserted bound on $\bfd_\bfv$.
%%%%%%%%%%%%%%%%%%%%%%%%%%%
%(iii) The last assertion follows easily from the fact that in the nodes the interpolations take the values of the true solution.
\QED \end{proof}

\section{Proof of Theorem~\ref{theorem: coupled error estimate}}
\label{section: proof completed}
The errors are decomposed using interpolations and the definition of lifts from Section~\ref{section:lifts}: omitting the argument $t$,
\begin{align*}
    u_h^{L}  - u  =&\ \big(\widehat u_h  - \Ih u \big)^{l} + \big(I_h u  - u \big) , \\
    v_h^{L}  - v  =&\ \big(\widehat v_h  - \Ih v \big)^{l} + \big(I_h v  - v \big) , \\
   X_h^L  - X  =&\ \big( \widehat X_h  - \Ih X  \big)^{l} +  \big(I_h X  - X \big).
\end{align*}
%We collect their nodal values as $\us$, $\vs$ and $\xs$ for $I_h u$, $I_h v$ and $\Xs$, respectively. Then
The last terms  in these formulas can be bounded in the $H^1(\Gamma)$ norm by $Ch^k$, using the interpolation bounds of Lemma~\ref{lemma: interpolation error}.

To bound the first terms on the right-hand sides, we first use the defect bounds of Lemma~\ref{lemma: semidiscrete residual}, which then together with the stability estimate of Proposition~\ref{proposition: stability - coupled problem} proves the result,
since by the norm equivalences of Lemma~\ref{lemma: equivalence of norms} and equations \eqref{M-L2}--\eqref{A-H1} we have (again omitting the argument $t$)
\begin{align*}
    &\| \big(\widehat u_h - \Ih u\big)^{l} \|_{L^2(\Ga)} \le
    c \| \widehat u_h - \Ih u \|_{L^2(\Ga_h^*)}
    = c \normM{\eu},
    \\
    &\| \nabla_{\Gamma} \big(\widehat u_h - \Ih u\big)^{l} \|_{L^2(\Ga)} \le
    c \| \nabla_{\Gamma_h^*} \big(\widehat u_h - \Ih u\big) \|_{L^2(\Ga_h^*)}
    = c \normA{\eu},
\end{align*}
and similarly for $\widehat v_h - \Ih v$ and $\widehat X_h - \Ih X$.
%
%\QED \end{proof}

\section{Extension to other velocity laws}
\label{section: extensions}

In this section we consider the extension of our results to different velocity laws: adding a mean curvature term to the regularized velocity law considered so far, and a dynamic velocity law. We concentrate on the velocity laws without coupling to the surface PDE, since the coupling can be dealt with in the same way as previously. We only consider the stability of the evolving surface finite element discretization, since bounds for the consistency error are obtained by the same arguments as before.

\subsection{Regularized mean curvature flow}

We next extend our results to the case where the velocity law contains a mean curvature term:
\begin{equation}
\label{eq:reg_mcf}
    v - \alpha \Delta_{\Ga(X)} v - \beta \Delta_{\Ga(X)} X = g(\cdot,t) \nu_{\Ga(X)} ,
\end{equation}
where $g:\R^3\times\R\to\R$ is a given Lipschitz continuous function of $(x,t)$, and $\alpha>0$ and $\beta>0$ are fixed parameters.
Here \(\Delta_{\Ga(X)} X\) is a suggestive notation for \( -  H\normal\), where \(H\)
denotes the mean curvature of  the surface $\Ga(X)$. (More
  precisely,  $\Delta_{\Ga(X)} \id = - H \nu_{\Ga(X)}$.)
%This is a
%regularized version of the well known mean curvature equation.

The corresponding differential-algebraic system reads
\begin{equation}
\label{eq:fe_reg_mcf}
    \regmass(\nnodes) \bfv + \stiff(\nnodes) \nnodes = \bfg(\bfx) ,
\end{equation}
where $\regmass(\nnodes)$ is again defined by \eqref{eq: K matrix def}
and where we now write $\stiff(\nnodes)$ for the matrix $\beta I_3\otimes \stiff(\nnodes)$ with $\stiff(\nnodes)$ of Section~\ref{subsection:DAE}.

Similarly as before the corresponding error equation is given as
\begin{equation*}
    \begin{aligned}
        \bfK(\xs) \ev + \bfA(\xs) \ex
        =&\ -  \bigl(\bfK(\bfx) - \bfK(\xs)\bigr) \vs - \bigl(\bfK(\bfx) - \bfK(\xs)\bigr) \ev \\
        &\ - \bigl(\bfA(\bfx) - \bfA(\xs)\bigr) \xs - \bigl(\bfA(\bfx) - \bfA(\xs)\bigr) \ex  \\
        &\ + \big(\bfg(\bfx) - \bfg(\xs)\big) - \bfM(\xs)\dv 
    \end{aligned}
\end{equation*}
\bcl together with $\dot\bfe_\bfx = \ev$. 

\begin{proposition}
\label{proposition: stability - with MC term}
    Under the assumptions of Proposition~\ref{proposition: stability - uncoupled problem},
  there exists  $h_0>0$ such that the following stability estimate holds for all $h\leq h_0$, for $0\le t \le T$:
    \begin{align*}
%        \|\ex(t)\|_{\bfK(\xs(t))} \leq&\ C h^k , \\
%        \|\ev(t)\|_{\bfK(\xs(t))} \leq&\ C h^k .
        & \|\ex(t)\|_{\bfK(\xs(t))}^2 \leq C \int_0^t  \|\dv(s)\|_{\star,\xs}^2 \,\d s,\\
        &  \|\ev(t)\|_{\bfK(\xs(t))}^2 \le C \|\dv(t)\|_{\star,\xs}^2
        + C \int_0^t  \|\dv(s)\|_{\star,\xs}^2 \,\d s.
    \end{align*}
    The constant $C$ is independent of $t$ and $h$, but depends on the final time $T$, and on the  parameters $\alpha$ and $\beta$. 
\end{proposition}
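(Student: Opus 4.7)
The plan is to adapt the energy argument of Proposition~\ref{proposition: stability - uncoupled problem} to the new velocity law, whose matrix formulation adds the term $\bfA(\xs)\ex$ to the left-hand side of the error equation. As before I work on a maximal interval $[0,t^*]$ on which the bootstrap bound $\|\nabla_{\Gamma_h(\xs(t))} e_x(\cdot,t)\|_{L^\infty(\Gamma_h(\xs(t)))} \le h^{(\kappa-1)/2}$ holds, so that the auxiliary results of Section~\ref{section: preliminaries} apply. Testing the error equation with $\ev$ gives
\begin{equation*}
\normK{\ev}^2 + \ev^T \bfA(\xs)\ex = \text{(right-hand side terms)},
\end{equation*}
and the key new observation is that, since $\dot\ex = \ev$, the cross term can be rewritten as
\begin{equation*}
\ev^T \bfA(\xs)\ex = \tfrac12 \diff \normA{\ex}^2 - \tfrac12 \ex^T\Bigl(\diff \bfA(\xs)\Bigr)\ex,
\end{equation*}
with the last contribution controlled by $C\normA{\ex}^2$ via Lemma~\ref{lemma: matrix derivatives}. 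The gradient part of the $\ex$-error is thereby produced by the energy itself.

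The contributions $-\ev^T(\bfK(\bfx)-\bfK(\xs))\vs$, $-\ev^T(\bfK(\bfx)-\bfK(\xs))\ev$, $\ev^T(\bfg(\bfx)-\bfg(\xs))$ and $-\ev^T\bfM(\xs)\dv$ are estimated exactly as in items (i)--(iii) and the defect estimate of the proof of Proposition~\ref{proposition: stability - uncoupled problem}. The two new $\bfA$-difference terms are handled analogously: Lemma~\ref{lemma: matrix differences} combined with Lemmas~\ref{lemma:cond-equiv}--\ref{lemma:theta-independence} gives
\begin{equation*}
|\ev^T(\bfA(\bfx)-\bfA(\xs))\xs| \le c\,\normA{\ev}\normA{\ex}\,\|\nabla_{\Gamma_h(\xs)} x_h^*\|_{L^\infty(\Gamma_h(\xs))},
\end{equation*}
where $x_h^*$ denotes the finite element function with nodal vector $\xs$; its tangential gradient is bounded uniformly in $h$ since $x_h^*$ is a nodal interpolant of the identity on $\Gamma(t)$. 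The term $|\ev^T(\bfA(\bfx)-\bfA(\xs))\ex|$ carries the small factor $\|\nabla_{\Gamma_h(\xs)} e_x\|_{L^\infty}\le h^{(\kappa-1)/2}$ from the bootstrap and is absorbed. After Young's inequality and absorption of $\tfrac12\normK{\ev}^2$ into the left-hand side I obtain
\begin{equation*}
\tfrac12 \normK{\ev}^2 + \tfrac12 \diff \normA{\ex}^2 \le C\normK{\ex}^2 + C\|\dv\|_{\star,\xs}^2.
\end{equation*}

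To close the argument I use in addition the identity
\begin{equation*}
\diff \normM{\ex}^2 = 2\ev^T\bfM(\xs)\ex + \ex^T\Bigl(\diff \bfM(\xs)\Bigr)\ex \le \normK{\ev}^2 + C\normM{\ex}^2,
\end{equation*}
which follows from $\dot\ex=\ev$, Young's inequality and Lemma~\ref{lemma: matrix derivatives}. Adding this to twice the previous display cancels the $\normK{\ev}^2$-terms and leaves
\begin{equation*}
\diff\bigl(\normM{\ex}^2 + \normA{\ex}^2\bigr) \le C\normK{\ex}^2 + C\|\dv\|_{\star,\xs}^2,
\end{equation*}
so Gronwall's inequality together with $\ex(0)=0$ yields the first asserted bound $\normK{\ex(t)}^2 \le C\int_0^t \|\dv(s)\|_{\star,\xs}^2\,\d s$. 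For the pointwise-in-time bound on $\ev$ I rearrange the error equation as $\bfK(\xs)\ev = -\bfA(\xs)\ex + R$ and estimate both sides in the dual norm $\|\cdot\|_{\bfK(\xs)^{-1}}$. Using $\bfK(\xs)\ge \alpha\bfA(\xs)$ yields $\|\bfA(\xs)\ex\|_{\bfK(\xs)^{-1}}\le C\normK{\ex}$, while dualising the estimates just used in the energy step gives $\|R\|_{\bfK(\xs)^{-1}} \le C\normK{\ex} + ch^{(\kappa-1)/2}\normK{\ev} + \|\dv\|_{\star,\xs}$. Absorbing the small $\ev$-term and inserting the bound on $\ex$ produces the second assertion. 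The bootstrap is closed by an inverse inequality, exactly as at the end of the proof of Proposition~\ref{proposition: stability - uncoupled problem}.

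The main obstacle is the indefinite cross term $\ev^T\bfA(\xs)\ex$, which is of size $\normK{\ev}\normK{\ex}$ and which, handled naively, would cost control of $\normK{\ev}$. Converting it into $\tfrac12\diff \normA{\ex}^2$ via $\dot\ex=\ev$ turns this apparent obstruction into the very mechanism that furnishes $H^1$ control of $\ex$, and the remainder of the proof is a routine adaptation of Proposition~\ref{proposition: stability - uncoupled problem}.
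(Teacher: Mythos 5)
Your proof is correct, but it follows a genuinely different route from the paper's. The paper simply observes that the additional cross term $\ev^T\bfA(\xs)\ex$ obeys $\abs{\ev^T\bfA(\xs)\ex}\le \tfrac16\normK{\ev}^2+c\,\normK{\ex}^2$ via Cauchy--Schwarz and Young, so after absorption the inequality \eqref{eq: velocity stability bound} holds unchanged and the rest of the proof of Proposition~\ref{proposition: stability - uncoupled problem} applies verbatim --- in particular the pointwise $\ev$-bound drops out directly. You instead keep the cross term on the left and convert it into $\tfrac12\diff\normA{\ex}^2-\tfrac12\ex^T\bigl(\diff\bfA(\xs)\bigr)\ex$ using $\dotex=\ev$, thereby building $\normA{\ex}^2$ into the energy; you then need the extra $\diff\normM{\ex}^2$ identity and a cancellation to assemble $\diff\normK{\ex}^2$, and a separate $\bfK(\xs)^{-1}$-dual-norm estimate of the velocity equation to recover the pointwise $\ev$-bound, since your energy inequality only controls $\tfrac12\normK{\ev}^2+\tfrac12\diff\normA{\ex}^2$. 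Both routes are valid and give the same result; the paper's is shorter because it reduces immediately to the already-proven case, whereas yours makes the underlying energy structure more visible but at the cost of the extra dual-norm step. One small misdiagnosis: your closing remark that handling $\ev^T\bfA(\xs)\ex$ ``naively \dots would cost control of $\normK{\ev}$'' is not accurate --- the direct Young absorption the paper uses loses nothing, since only a fraction of $\normK{\ev}^2$ is absorbed and the $\normK{\ex}^2$ it pays is handled by the same Gronwall argument as before.
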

\ecl
\begin{proof}
We detail only those parts of the proof of Proposition~\ref{proposition: stability - uncoupled problem} where the mean curvature term introduces differences, otherwise exactly the same proof applies.

In order to prove the stability estimate we again test with $\ev$, and obtain
\begin{equation*}
    \begin{aligned}
        \normK{\ev}^2  =&\ -  \ev^T \bigl(\bfK(\bfx) - \bfK(\xs)\bigr) \vs - \ev^T \bigl(\bfK(\bfx) - \bfK(\xs)\bigr) \ev \\
        &\ - \ev^T \bigl(\bfA(\bfx) - \bfA(\xs)\bigr) \xs - \ev^T \bigl(\bfA(\bfx) - \bfA(\xs)\bigr) \ex - \ev^T \bfA(\xs) \ex \\
        &\ + \ev^T \big(\bfg(\bfx) - \bfg(\xs)\big) - \ev^T \bfM(\xs)\dv .
    \end{aligned}
\end{equation*}

Every term is estimated exactly as previously in the proof of Proposition~\ref{proposition: stability - uncoupled problem}, except the terms corresponding to the mean curvature term, involving the stiffness matrix $\bfA$. They are estimated by the same techniques as previously:
\begin{align*}
    \ev^T \bigl(\bfA(\bfx) - \bfA(\xs)\bigr) \xs + \ev^T \bigl(\bfA(\bfx) - \bfA(\xs)\bigr) \ex
    \leq &\ \frac{1}{6} \normK{\ev}^2 + c \normK{\ex}^2 ,\\
    \ev^T \bfA(\xs) \ex \leq &\ \frac{1}{6} \normK{\ev}^2 + c \normK{\ex}^2 .
\end{align*}

Altogether we obtain the error bound
\begin{equation*}
        \normK{\ev}^2 \leq c \normK{\ex}^2 + c \|\dv\|_{\star,\xs}^2 ,
\end{equation*}
which is exactly \eqref{eq: velocity stability bound}. The proof is then completed as before.
\QED \end{proof}

With Proposition \ref{proposition: stability - with MC term} and the appropriate defect bounds,
Theorem~\ref{theorem: coupled error estimate} extends directly to the system with mean curvature term in the regularized velocity law.

\subsection{A dynamic velocity law}

Let us consider the dynamic velocity law, again without coupling to a surface PDE:
\begin{equation*}
    \mat v + v \nb_{\Ga(X)} \cdot v  - \alpha \laplace_{\Ga(X)} v = g(\cdot,t) \, \nu_{\Ga(X)},
\end{equation*}
where again $g:\R^3\times\R\to\R$ is a given Lipschitz continuous function of $(x,t)$, and $\alpha>0$ is a fixed parameter. This problem is considered together with the ordinary differential equations \eqref{velocity} for the positions $X$ determining the surface $\Gamma(X)$. Initial values are specified for~$X$ and $v$.

The weak formulation and the semidiscrete problem can be obtained by a similar argument as for the PDE on the surface in Section~\ref{section: coupled problem}.
%Similarly as before, we consider the weak formulation: Find $v(\cdot,t) \in H^1(\Ga(X(\cdot,t)))^3$ such that the following weak equation holds for every test function $\psi \in H^1(\Ga(X(\cdot,t)))^3$
%\begin{equation*}
%    \begin{aligned}
%        \diff \int_{\Ga(X)} \!\! v \cdot \psi + \alpha \int_{\Ga(X)} \!\! \!\! \nabla_{\Ga(X)} v \cdot \nabla_{\Ga(X)} \psi =&\ \int_{\Ga(X)} g \,  \nu_{\Ga(X)} \cdot \psi ,
%    \end{aligned}
%\end{equation*}
%alongside with the ordinary differential equations \eqref{velocity} for the positions $X$.
%
%Then the finite element spatial semidiscretization of this problem reads as: Find the unknown nodal vector $\bfx(t)\in \R^{3N}$ and the unknown finite element function $v_h(\cdot,t)\in S_h(\bfx(t))^3$ such that the following semidiscrete problem holds for every $\psi_h \in S_h(\bfx(t))^3$:
%\begin{equation}
%\label{eq: dynamic uncoupled problem - semidiscrete}
%    \begin{aligned}
%        \diff \int_{\Ga_h(\bfx)} \!\! v_h \cdot \psi_h + \alpha \int_{\Ga_h(\bfx)} \!\! \!\!   \nabla_{\Ga_h(\bfx)} v_h \cdot \nabla_{\Ga_h(\bfx)} \psi_h =&\ \int_{\Ga_h(\bfx)} \!\! g \,\nu_{\Ga_h(\bfx)} \cdot \psi_h ,
%    \end{aligned}
%\end{equation}
%together with the ordinary differential equations \eqref{xh}. As before, the nodal vector of the initial positions $\bfx(0)$ is taken from the exact initial values at the nodes:% $x_j^0$ of the triangulation of the given initial surface $\Gamma^0$:
%$x_j(0) = x_j^0$ for $j=1,\dots,N$.
Therefore we immediately present the ODE formulation of the semidiscretization. As in Section~\ref{subsection:DAE}, the nodal vectors $\bfv\in\R^{3N}$ of the finite element function $v_h$, together with the surface nodal vector $\bfx\in\R^{3N}$ satisfy a system of ODEs with matrices and driving term as in Section~\ref{section: uncoupled problem}:
% We obtain from the weak formulation %\eqref{eq: dynamic uncoupled problem - semidiscrete}
% and \eqref{xh} the following coupled DAE system for the nodal values $\bfv$ and $\bfx$:
\begin{equation}
\label{eq: DAE form - dynamic uncoupled}
    \begin{aligned}
        \diff \Big(\bfM(\bfx) \bfv\Big) + \bfA(\bfx)\bfv =&\ \bfg(\bfx,t) , \\
        \dot\bfx =&\ \bfv .
    \end{aligned}
\end{equation}
% Here the matrix $\bfK(\bfx)$ %=I_3\otimes (\bfM(\bfx)+\alpha\bfA(\bfx))$
% is from \eqref{eq: K matrix def}, and the driving term $\bfg(\bfx,t)$ is defined as in Section~\ref{section: uncoupled problem}.
%given by
%\begin{equation*}
%    \bfg(\bfx,t))\vert_{3(j-1)+\ell} = \int_{\Ga_h(\bfx)} g(\cdot,t) \,\bigl(\nu_{\Ga_h(\bfx)}\bigr)_\ell \, \phi_j[\bfx] \qquad (j = 1, 2, \dotsc, N, \ \ell=1,2,3) .
%\end{equation*}

By using the same notations for the exact positions $\xs(t)\in\R^{3N}$, for the \bbk interpolated \ebk exact velocity $\vs(t)\in\R^{3N}$, and for the defect $\dv(t)$, 
%and $\dx(t)$
we obtain that they fulfill the following equation
\begin{equation*}
    \begin{aligned}
        \diff \Big(\bfM(\xs) \vs\Big) + \bfA(\xs)\vs =&\ \bfg(\xs,t) + \bfM(\xs)\dv , \\
        {\dot \bfx}^* =&\ \bcl \vs  \ecl.
    \end{aligned}
\end{equation*}
By subtracting this from \eqref{eq: DAE form - dynamic uncoupled}, and using similar arguments as before, we obtain the error equations for the surface nodes and velocity:
\begin{equation*}
    \begin{aligned}
        \diff \Big(\bfM(\xs) \ev\Big) + \bfA(\xs)\ev =&\ -\diff \Big(\big(\bfM(\bfx) - \bfM(\xs)\big) \vs \Big) \\
        &\ -\diff \Big(\big(\bfM(\bfx) - \bfM(\xs)\big) \ev \Big)\\
        &\ - \big(\bfA(\bfx) - \bfA(\xs)\big) \vs  \\
        &\ - \big(\bfA(\bfx) - \bfA(\xs)\big) \ev \\
        &\ + \big(\bfg(\bfx)-\bfg(\xs)\big) - \bfM(\xs)\dv \\
        \dotex =&\ \bcl \ev \ecl .
    \end{aligned}
\end{equation*}

%\subsection{Stability estimate}

\bcl
We then have the following stability result.

\begin{proposition}
\label{proposition: stability - dynamic velocity laws}
    Under the assumptions of Proposition~\ref{proposition: stability - uncoupled problem}, there exists  $h_0>0$ such that the following error estimate holds for all $h\leq h_0$, uniformly for $0\le t \le T$:
%    \begin{align*}
%%        \|\ex(t)\|_{\bfK(\xs(t))} \leq&\ C h^k , \\
%%        \|\ev(t)\|_{\bfM(\xs(t))}^2 +  \int_0^t \|\ev(s)\|_{\bfA(\xs(s))}^2\, \d s\leq&\ C h^{2k} .
%        \|\ex(t)\|_{\bfK(\xs(t))}^2 +\|\ev(t)\|_{\bfM(\xs(t))}^2 +&\  \int_0^t \|\ev(s)\|_{\bfA(\xs(s))}^2\, \d s \\
%        &\leq \ C \int_0^t  \|\dv(s)\|_{\star,\xs}^2 \, \d s.
%%        ,\\
%%        \|\ev(t)\|_{\bfM(\xs(t))}^2 + \int_0^t \|\ev(s)\|_{\bfA(\xs(s))}^2\, \d s 
%%        &\leq \ %C \|\dv(t)\|_{\star,\xs}^2 +
%%        C \int_0^t  \|\dv(s)\|_{\star,\xs}^2\, \d s.
%    \end{align*}
    \begin{align*}
	    \|\ex(t)\|_{\bfK(\xs(t))}^2 + \|\ev(t)\|_{\bfM(\xs(t))}^2 + \int_0^t\!\! \|\ev(s)\|_{\bfA(\xs(s))}^2\, \d s 
	    \leq \! C \!\! \int_0^t \!\! \|\dv(s)\|_{\star,\xs}^2 \, \d s.
    \end{align*}
    The constant $C>0$ is independent of $t$ and $h$, but depends on the final time $T$ and the   parameter $\alpha$.
\end{proposition}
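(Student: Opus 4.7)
The plan is to adapt the energy argument in the proof of Proposition~\ref{proposition: stability - coupled problem}, as the dynamic velocity law has exactly the structure of an evolving-surface parabolic equation for $\bfv$ on the surface driven by $\bfx$. I would again work in the matrix-vector formulation and use Lemmas~\ref{lemma: matrix differences}--\ref{lemma: matrix derivatives} as the main tools, completely avoiding geometric arguments at this stage.

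First, as in the earlier stability proofs, I would introduce a maximal time $0<t^*\le T$ on which the bootstrap condition
$\|\nabla_{\Ga_h(\xs)} e_x(\cdot,t)\|_{L^\infty(\Ga_h(\xs))}\le h^{(\kappa-1)/2}$
(for $\kappa>1$ from the defect bound on $\dv$) holds. For $t\in[0,t^*]$ I would test the first error equation with $\ev$. The left-hand side yields
$$
\ev^T \diff\bigl(\bfM(\xs)\ev\bigr) + \ev^T \bfA(\xs)\ev
= \tfrac12 \diff \|\ev\|_{\bfM(\xs)}^2 + \tfrac12\,\ev^T \!\bigl(\diff\bfM(\xs)\bigr)\ev + \|\ev\|_{\bfA(\xs)}^2,
$$
where the middle term is controlled via Lemma~\ref{lemma: matrix derivatives} by $c\|\ev\|_{\bfM(\xs)}^2$.

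The right-hand side contains six pieces, which I would treat in complete analogy with items~(iii)--(vii) of the proof of Proposition~\ref{proposition: stability - coupled problem}, but with $\ev$ in the role of $\eu$ and $\vs$ in the role of $\us$: the terms involving $\bfA(\bfx)-\bfA(\xs)$ acting on $\vs$ and on $\ev$ are bounded by Lemmas~\ref{lemma: matrix differences} and \ref{lemma:theta-independence} (using the bootstrap bound to absorb $\|\nabla_{\Ga_h^\theta}e_x^\theta\|_{L^\infty}$ into $h^{(\kappa-1)/2}$), and the $\diff$ of the $\bfM$-difference terms is split via the product rule into a material-derivative piece plus a total derivative which is later absorbed through Gronwall after integration. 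The $\bfg(\bfx)-\bfg(\xs)$ term is handled by the Leibniz-formula argument from item~(iii) of Proposition~\ref{proposition: stability - uncoupled problem}, now invoking Lemma~\ref{lemma: mat normal vector} for the variation of $\nu_{\Gamma_h^\theta}$. The defect term is bounded via the dual norm identity \eqref{eq: star and H^-1 norm equality}. After Young's inequality and absorption into $\|\ev\|_{\bfA(\xs)}^2$ (possible since $\alpha>0$) one arrives at an inequality of the form
$$
\tfrac12 \diff \|\ev\|_{\bfM(\xs)}^2 + \tfrac12 \|\ev\|_{\bfA(\xs)}^2
\le c\|\ev\|_{\bfM(\xs)}^2 + c\|\ex\|_{\bfK(\xs)}^2 + c\|\dv\|_{\star,\xs}^2 - \tfrac12 \diff\!\Big(\ev^T(\bfM(\bfx)-\bfM(\xs))\ev\Big) + \cdots .
$$
Combining this with the identity $\dotex=\ev$ and the estimate $\tfrac12\diff\|\ex\|_{\bfK(\xs)}^2 \le \|\ev\|_{\bfK(\xs)}^2 + c\|\ex\|_{\bfK(\xs)}^2$ (via Cauchy--Schwarz and Lemma~\ref{lemma: matrix derivatives}), followed by integration in time and Gronwall's inequality, gives the asserted bound once the total-derivative term is moved to the left and absorbed using the bootstrap.

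The main obstacle, compared with the elliptic regularized case of Section~\ref{section: uncoupled problem}, is that $\bfv$ now satisfies a genuine evolution equation, so that the $\bfM$-norm of $\ev$ (rather than its full $\bfK$-norm) is what is directly controlled in time, while only an integrated $\bfA$-norm bound on $\ev$ is available; handling $\diff((\bfM(\bfx)-\bfM(\xs))\vs)$ and $\diff((\bfM(\bfx)-\bfM(\xs))\ev)$ requires exactly the same product-rule-plus-Leibniz trick as in item~(iii)--(iv) of Proposition~\ref{proposition: stability - coupled problem}, and this is the place where $\dot\vs$ needs to be bounded in $L^\infty$, necessitating the smoothness assumption on the exact solution. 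Finally, the bootstrap is closed exactly as before by an inverse estimate showing that $\|\nabla_{\Ga_h(\xs)}e_x\|_{L^\infty}\le c h^{\kappa-1}\le \tfrac12 h^{(\kappa-1)/2}$ for $h\le h_0$, which forces $t^*=T$.
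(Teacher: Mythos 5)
Your proposal follows exactly the route the paper takes: the paper's own proof of this proposition is a one-line reference to testing with $\ev$ and ``bounding the terms in the same way as in the proofs of Propositions~\ref{proposition: stability - uncoupled problem} and~\ref{proposition: stability - coupled problem}'', followed by Gronwall. Your identification of $\ev$ playing the role of $\eu$, $\vs$ playing the role of $\us$, and of the required product-rule/Leibniz manipulations for the $\diff(\bfM(\bfx)-\bfM(\xs))$ terms, is all correct, as is your combination with $\dotex=\ev$ and the closing inverse-estimate argument.

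There is, however, one genuine gap: you only carry the bootstrap assumption $\|\nabla_{\Ga_h(\xs)}e_x\|_{L^\infty}\le h^{(\kappa-1)/2}$, whereas the corresponding argument in the proof of Proposition~\ref{proposition: stability - coupled problem} uses \emph{two} bootstrap conditions \eqref{eq: assumed bounds - 2}, the second being $\|e_u\|_{L^\infty(\Ga_h(\xs))}\le 1$. That second condition is what makes item~(iv) work, because estimating $-\tfrac12\,\eu^T\bigl(\diff(\bfM(\bfx)-\bfM(\xs))\bigr)\eu$ via Lemma~\ref{lemma: matrix differences} and the Leibniz formula produces a triple product $e_u^\theta\,\partial_\theta^\bullet(\nabla_{\Gamma_h^\theta}\cdot e_x^\theta)\,e_u^\theta$ (plus a similar quadruple product), and the only workable H\"older split puts one factor $e_u^\theta$ in $L^\infty$. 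In your adaptation, the analogous term is $-\tfrac12\,\ev^T\bigl(\diff(\bfM(\bfx)-\bfM(\xs))\bigr)\ev$, and to close it you need the additional bootstrap $\|e_v(\cdot,t)\|_{L^\infty(\Ga_h(\xs))}\le 1$ on $[0,t^*]$. This is cheap to add: since the energy estimate gives pointwise-in-time control of $\|\ev(t)\|_{\bfM(\xs(t))}\le C h^\kappa$ with $\kappa>1$, an inverse inequality yields $\|e_v\|_{L^\infty}\le ch^{-1}\|e_v\|_{L^2}\le ch^{\kappa-1}\le \tfrac12$ for $h\le h_0$, which closes the bootstrap in the same step that closes the one for $e_x$. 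Without it, the estimate of item~(iv) does not go through (an $L^4$--$L^4$--$L^2$ alternative would produce a cubic term $\normM{\ev}\normK{\ev}\normA{\ev}$ that is not absorbable by Gronwall). The remainder of your outline is sound.
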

\ecl
\begin{proof}
%We will again show our result on $[0,t^*]$, and then finish the proof as before.
By testing the error equation with $\ev$ we obtain
\begin{equation*}
    \begin{aligned}
        \ev^T \diff \Big(\bfM(\xs) \ev\Big) + \ev^T \bfA(\xs)\ev =
        &\ - \ev^T \diff \Big(\big(\bfM(\bfx) - \bfM(\xs)\big) \vs \Big) \\
        &\ - \ev^T \diff \Big(\big(\bfM(\bfx) - \bfM(\xs)\big) \ev \Big)\\
        &\ - \ev^T \big(\bfA(\bfx) - \bfA(\xs)\big) \vs  \\
        &\ - \ev^T \big(\bfA(\bfx) - \bfA(\xs)\big) \ev \\
        &\ + \ev^T \big(\bfg(\bfx)-\bfg(\xs)\big) - \ev^T \bfM(\xs)\dv .
    \end{aligned}
\end{equation*}
The terms  are bounded in the same way as the corresponding terms in the proofs of
Propositions~\ref{proposition: stability - uncoupled problem} and~\ref{proposition: stability - coupled problem}. With these estimates, a Gronwall inequality yields the result.
\QED \end{proof}

With Proposition \ref{proposition: stability - dynamic velocity laws} and the appropriate defect bounds,
Theorem~\ref{theorem: coupled error estimate} extends directly to the parabolic surface PDE coupled with the dynamic velocity law.

\section{Numerical results}
\label{section: numerics}

In this section we complement Theorem~\ref{theorem: coupled
  error estimate} by showing the numerical behaviour of piecewise linear finite elements, which are not covered by Theorem~\ref{theorem: coupled
  error estimate}, but nevertheless perform remarkably well.
   Moreover, we compare our regularized velocity law with regularization by mean curvature flow.  
   %Since the latter is implemented
% with linear finite elements, for a fair comparison we choose to implement using linear finite elements as well.  Although our main theorem requires higher order finite
% elements our experiments clearly indicate that this result should be true for
% \(k=1\), as we already mentioned after Theorem~\ref{theorem: coupled error
%   estimate}.  \par
% First, we present two test problems and the algorithms to
% solve them.  Then we display error tables and surface plots.

\subsection{A coupled problem}
\label{sec:coupled-problem}

Our test problem is a combination of \eqref{eq: coupled problem}
with a mean curvature term as in \eqref{eq:reg_mcf}:
\begin{equation}
  \label{eq:2}
  \begin{aligned}
    \mat u + u \nbg \cdot v - \lb u  = {}
    & f(t,x), \\
    v - \alpha \lb v - \beta \lb X = {}
    & \delta u \normal_{\surface} + g(t,x)\nu_\Ga, \\
%     u(x,0) = {}
%     & u_{0}(x) \quad\text{on }\surface_{0},
  \end{aligned}
\end{equation}
for non-negative parameters \(\alpha,\beta,\delta\). The velocity law here is  a special case of \eqref{eq: coupled problem} for \(\beta = 0\), and  reduces to \eqref{eq:reg_mcf} for $\delta=0$. The matrix-vector form reads 
\begin{align*}
  \dfdt{}{t}\Bigl( \mass\bigl(\nnodes(t)\bigr)\bfu(t)\Bigr) +
  \stiff\bigl(\nnodes(t)\bigr) \bfu(t)
  = {}
  & \bff\bigl(t,\nnodes(t)\bigr),
  && t\in [0,T], \\
  \regmass \bigl(\nnodes(t)\bigr) \dot{\bfx}(t) + \beta
  \stiff\bigl(\nnodes(t)\bigr) \nnodes(t)= {}
  & \delta \mathbf{N}\bigl(\nnodes(t))\bfu(t) +
    \bfg\bigl(t, \nnodes(t) \bigr),
  &&  t\in [0,T],
\end{align*}
for given \(\bfx(0)\) and \(\bfu(0)\), where 
% \(\normal_{h}(t)\) is the element-wise
% outward pointing normal field of
% \(\surface_{h}\bigl(\nnodes(t)\bigr)\) and
\begin{equation*}
  \mathbf{N}\bigl(\nnodes)\bfu \rvert_{3(j-1)+\ell}
  = \int_{\surface_{h}(\nnodes)} \bigl(\normal_{\Ga_h}\bigr)_{\ell} \bfu_{j}
  \phi_{j}[\nnodes],
\end{equation*}
for \(j=1,\dotsc, N\) and \(\ell = 1,2,3\).  \par
In our numerical experiments we used a linearly implicit Euler discretization of this system with step sizes chosen so small that the error is dominated by the spatial discretization error.
% We introduce a time step size \(\tau>0\) and write the fully discrete numerical
% solution with capital letters \(\fnodes, \fU\).  Further we use super indices to
% denote time levels.  For example we write \(\fnodes^{n}\) for the approximation
% to \(\bfx(t_{n})\) with
% \(t_{n}= n \tau\) and further we use the notation \(\mass^{n} =
% \mass(\fnodes^{n})\), \(\stiff^{n}= \stiff(\fnodes^{n})\), etc.
% \begin{algorithm}
%   Let \(\fnodes^{0}\) and \(\fU^{0}\) be given.  For \(n = 0, \dotsc, N_{T}\)
%   solve the decoupled linear systems
%   \begin{align*}
%     (\mass^{n+1} + \tau \stiff^{n+1})\fU^{n+1} = {}
%     & \mass^{n} \fU^{n} + \tau \bff^{n+1} \\
%     (\regmass^{n}  +  \beta \tau \bfA^n)\fnodes^{n+1} = {}
%     & \regmass^{n} \fnodes^{n} + \tau \delta \mass(\normal_{h})^{n} \fU^{n} + \tau
%       \bfg^{n}.
%   \end{align*}
% \end{algorithm}
% This method uses a linearly implicit Euler method in time, expected to be order one. We note that the velocity does not appear explicitly in the scheme, although it
% was important in the error analysis.

\begin{example}
  \label{example:1}
  We consider \eqref{eq:2} and choose \(f\) and \(g\) such that \(X(p,t)= r(t)
  p\) with
  \begin{equation*}
    r(t) = \frac{r_{0}r_{K}}{r_{K}e^{-kt} + r_{0}(1-e^{-kt})}
  \end{equation*}
  and \(u(X,t)= X_{1}X_{2} e^{-6t}\) are the exact solution of the problem.  The
  parameters are set to be \(T = 1\), \(\alpha=1\), \(\beta=0\), \(\delta=0.4\),
  \(r_{0}= 1\), \(r_{K}=2\) and \(k = 0.5\).

  We choose $(\mathcal{T}_k)$ as a series of meshes  such that $2 h_k \approx h_{k-1}$.
  In Table~\ref{table:ex1} we report on the errors and the corresponding experimental orders of
  convergence (EOC). Using the notation of Section~\ref{section:lifts},
  the following norms are used:
  \begin{align*}
   \|{\rm err}_u \|_{ L^{\infty}(L^{2})} &= \sup_{[0,T]} \lVert \widehat u_{h}(\dotarg, t)
    - \Ih u(\dotarg, t) \rVert_{L^{2}(\surface_h^*(t))},  \\
    \|{\rm err}_u \|_{ L^{2}(H^1)}&= \left( \int_{0}^{T}
      \Bigl\lVert \widehat u_{h}(\dotarg,s)- \Ih u(\dotarg,s)
      \Bigr\rVert^{2}_{H^{1}(\surface_h^*(s))}
      \mathrm{d}s
    \right)^{\frac{1}{2}},  \\
    \|{\rm err}_v \|_{ L^{\infty}(H^1)} &= \sup_{[0,T]}\lVert \widehat v_{h}(\dotarg, t)
    - \Ih v(\dotarg, t) \rVert_{H^1(\surface_h^*(t))},  \\
    \|{\rm err}_x \|_{ L^{\infty}(H^1)} &= \sup_{[0,T]}\lVert \widehat x_{h}(\dotarg, t)
    - \id_{\surface_h^*(t)} \rVert_{H^1(\surface_h^*(t))} .
  \end{align*}
  The EOCs for the errors \(E(h_{k-1})\) and \(E(h_{k})\)
  with mesh sizes \(h_{k-1},h_{k}\) are given via
  \begin{equation*}
    EOC(h_{k-1},h_{k}) = \frac{\log
      \left(\frac{E(h_{k-1})}{E(h_{k})}\right)}{\log\left(
        \frac{h_{k-1}}{h_{k}}\right)} , \qquad (k=2,\dotsc,n).
  \end{equation*}
  %For computing all the EOCs we choose \(\tau_0=h_0^2\). 
  The degree of freedoms (DOF) and maximum mesh size at time \(T\) are also
  reported in the tables.

  In Table~\ref{table:ex1} we report on the errors and EOCs observed using Example~\ref{example:1}. The EOCs in the PDE are expected to be $2$ for the $L^\infty(L^2)$ norm and $1$ for the $L^2(H^1)$ norm, while the errors in the surface and in  the surface velocity are expected to be $1$ in the \(L^{\infty}(H^{1})\) norm.
  \begin{table}[!h]
    \centering
    \subfloat[Errors for \(u\)]{
      \begin{tabular}{c rr ll ll}
        \toprule
        level & DOF & \(h(T)\)
        & $\|{\rm err}_u \|_{ L^{\infty}(L^{2})}$ & EOC
        & $ \|{\rm err}_u \|_{ L^{2}(H^1)}$ & EOC \\
        \midrule
        1 & 126   & 0.6664 &  0.1519165   & -    & 0.2727214 & -       \\
        2 & 516   & 0.4088 &  0.0896624   & 1.08 & 0.1498895 & 1.22 \\
        3 & 2070  & 0.1799 &  0.0222349   & 1.70 & 0.0344362 & 1.79 \\
        4 & 8208  & 0.0988 &  0.0070552   & 1.91 & 0.0109074 & 1.92 \\
        5 & 32682 & 0.0499 &  0.0018319   & 1.98 & 0.0029375 & 1.92 \\
        \bottomrule
      \end{tabular}
    }
    \\[2ex]
    \subfloat[Surface and velocity errors]{
      \begin{tabular}{c rr ll ll}
        \toprule
        level & DOF & \(h(T)\)
        & \(\|{\rm err}_v \|_{ L^{\infty}(H^1)}\) & EOC
        & \(\|{\rm err}_x \|_{ L^{\infty}(H^1)}\) & EOC \\
        \midrule
        1 & 126   & 0.6664 & 0.2260428  & -    & 0.1473157  & -       \\
        2 & 516   & 0.4088 & 0.0595755  & 2.73 & 0.0298673  & 3.27 \\
        3 & 2070  & 0.1799 & 0.0158342  & 1.61 & 0.0106836  & 1.25 \\
        4 & 8208  & 0.0988 & 0.0053584  & 1.81 & 0.0042312  & 1.54 \\
        5 & 32682 & 0.0499 & 0.0019341  & 1.50 & 0.0017838  & 1.27 \\
        \bottomrule
      \end{tabular}
    }
    \caption{Errors and EOCs for Example~\ref{example:1} }
    \label{table:ex1}
  \end{table}
\end{example}

\begin{example}
  \label{example:3}
  Again we consider \eqref{eq:2}, but this time we quantitatively compare
  the two different regularized velocity laws.  Hence, we let \(\delta\) vanish.  We use a \(g\) like in Example~\ref{example:1} and run two tests with the common
  parameters \(T = 2\), \(r_{0}= 1\), \(r_{K}=2\) and \(k = 0.5\), and use the
  same mesh and time step levels as before.  The first
  test uses \(\alpha=0\) and \(\beta=1\) and the second test uses
  \(\alpha=1\) and \(\beta=0\).  The results are captured in
  Table~\ref{table:ex3}.
  \begin{table}[!h]
    \centering
    \subfloat[Surface and velocity errors with parameters \(\alpha = 0 \) and \(\beta=1\).]{
      \begin{tabular}{c rr ll ll ll}
        \toprule
        level & DOF & \(h(T)\)
        &  \(L^{\infty}(L^{2})_{v}\) & EOC
        & \(L^{\infty}(H^{1})_{v}\) & EOC
        & \(L^{\infty}(H^{1})_{x}\) & EOC \\
        \midrule
        1 & 126   & 0.6664 & 0.756045  & -     & 1.31532  & -     & 1.601255  & -    \\
        2 & 516   & 0.4088 & 0.393067  & 1.34  & 0.78538  &  1.06 & 0.522342  & 2.29 \\
        3 & 2070  & 0.1799 & 0.095914  & 1.72  & 0.96206  & -0.25 & 0.137396  & 1.63 \\
        4 & 8208  & 0.0988 & 0.035166  & 1.67  & 1.48784  & -0.73 & 0.044666  & 1.87 \\
        5 & 32682 & 0.0499 & 0.019755  & 0.85  & 2.73584  & -0.89 & 0.013507  & 1.75 \\
        \bottomrule
      \end{tabular}
    }
    \\[2ex]
    \subfloat[Surface and velocity errors with parameters \(\alpha = 1\) and  \(\beta = 0\).]{
    \begin{tabular}{c rr ll ll ll}
      \toprule
      level & DOF & \(h(T)\)
      & \(L^{\infty}(L^{2})_{v}\) & EOC
      & \(L^{\infty}(H^{1})_{v}\) & EOC
      & \(L^{\infty}(H^{1})_{x}\) & EOC \\
      \midrule
      1 & 126   & 0.6664 & 0.149836   & -    & 0.225114 & -    & 0.143419 & -     \\
      2 & 516   & 0.4088 & 0.036118   & 2.91 & 0.058147 & 2.77 & 0.024087 & 3.65  \\
      3 & 2070  & 0.1799 & 0.009286   & 1.65 & 0.015843 & 1.58 & 0.009702 & 1.11  \\
      4 & 8208  & 0.0988 & 0.002705   & 2.06 & 0.005361 & 1.81 & 0.003990 & 1.48  \\
      5 & 32682 & 0.0499 & 0.000686   & 2.01 & 0.001935 & 1.49 & 0.001746 & 1.21  \\
      \bottomrule
    \end{tabular}
    }
    \caption{Errors and EOCs for Example~\ref{example:3}.}
    \label{table:ex3}
  \end{table}
  Our regularized velocity law provides smaller errors as regularizing with mean curvature flow.
  The EOCs in the errors in the surface and in the errors for the surface velocity are expected to be $1$ in  \(L^{\infty}(H^{1})_{v}\) and \(L^{\infty}(H^{1})_{x}\) norm, see Table~\ref{table:ex3}.b.
  While it can be observed that for this particular example the convergence rates for
  \(\alpha\neq 0\) are higher then for \(\beta \neq 0\).
\end{example}

\subsection{A model for tumor growth}
\label{sec:model-tumor-growth}

Our next test problem is the coupled system of equations
\begin{equation}
  \label{eq:3}
  \begin{aligned}
    \mat u + u \nbg \cdot v - \lb u  = {}
    & f_{1}(u,w), \\
    \mat w + w \nbg \cdot v - D_{c}\lb w  = {}
    & f_{2}(u,w), \\
    v - \alpha \lb v - \beta \lb X = {}
    & \delta u \normal_{\surface}, \\
%     \binom{u(x,0)}{w(x,0)} = {}
%     & \binom{u_{0}(x)}{w_{0}(x)} \quad\text{on }\surface_{0},
  \end{aligned}
\end{equation}
where
\begin{equation*}
  f_{1}(u,w) = \gamma(a-u+u^{2}w), \quad
  f_{2}(u,w) = \gamma(b-u^{2}w),
\end{equation*}
with non-negative parameters \(D_{c}, \gamma, a, b, \alpha, \beta\). \par For
\(\alpha=0\) this system has been used as a simplified model for tumor growth; see Barreira, Elliott and Madzvamuse \cite{BEM} and \cite{ElliottStyles_ALEnumerics,CGG}.  These authors used the mean curvature term with a small
parameter \(\beta>0\) to regularize their velocity law.

We used piecewise linear finite elements and the same time discretization scheme as in \cite{BEM,ElliottStyles_ALEnumerics}.
\begin{example}
  \label{example:2}
  We consider \eqref{eq:3} and want to compare qualitatively the two different
  regularized velocity laws \(\alpha\neq 0\) and \(\beta\neq 0\).  As
  common parameters we use \(D_{c}= 10\), \(\gamma = 100\), \(a = 0.1\), \(b =
  0.9\) and \(T = 5\).  The initial surface is a sphere and the initial values
  \(u_{0}\) and \(w_{0}\) are calculated by solving an auxiliary surface PDE as
  follows.  We take small perturbations around the steady state
  \begin{equation*}
    \binom{\widetilde{u}_{0}}{\widetilde{w}_{0}} = \binom{a+b +
      \varepsilon_{1}(x)}{\frac{b}{(a+b)^{2}} + \varepsilon_{2}(x)},
  \end{equation*}
  where \(\varepsilon_{1}(x),\varepsilon_{2}(x)\in [0,0.01]\) take random
  values.  We solve the auxiliary coupled diffusion equations with the
  stationary initial
  surface until time \(\widetilde{T}=5\).  We set \(u_{0}=
  \widetilde{u}(\widetilde{T})\) and \(w_{0}= \widetilde{w}(\widetilde{T})\),
  which we used as initial values for \eqref{eq:3}.  \par
  We perform two experiments with \((\alpha,\beta) = (0,0.01)\) and
  \((\alpha, \beta) = (0.01, 0)\).  We present snapshots in
  Figure~\ref{fig:example2}.  We observe that both velocity laws display the
  same qualitative behavior, also agreeing with  \cite{ElliottStyles_ALEnumerics}.
  \begin{figure}
    \centering
    \subfloat[time \(t=0\)]{
      \includegraphics[width=.95\textwidth]{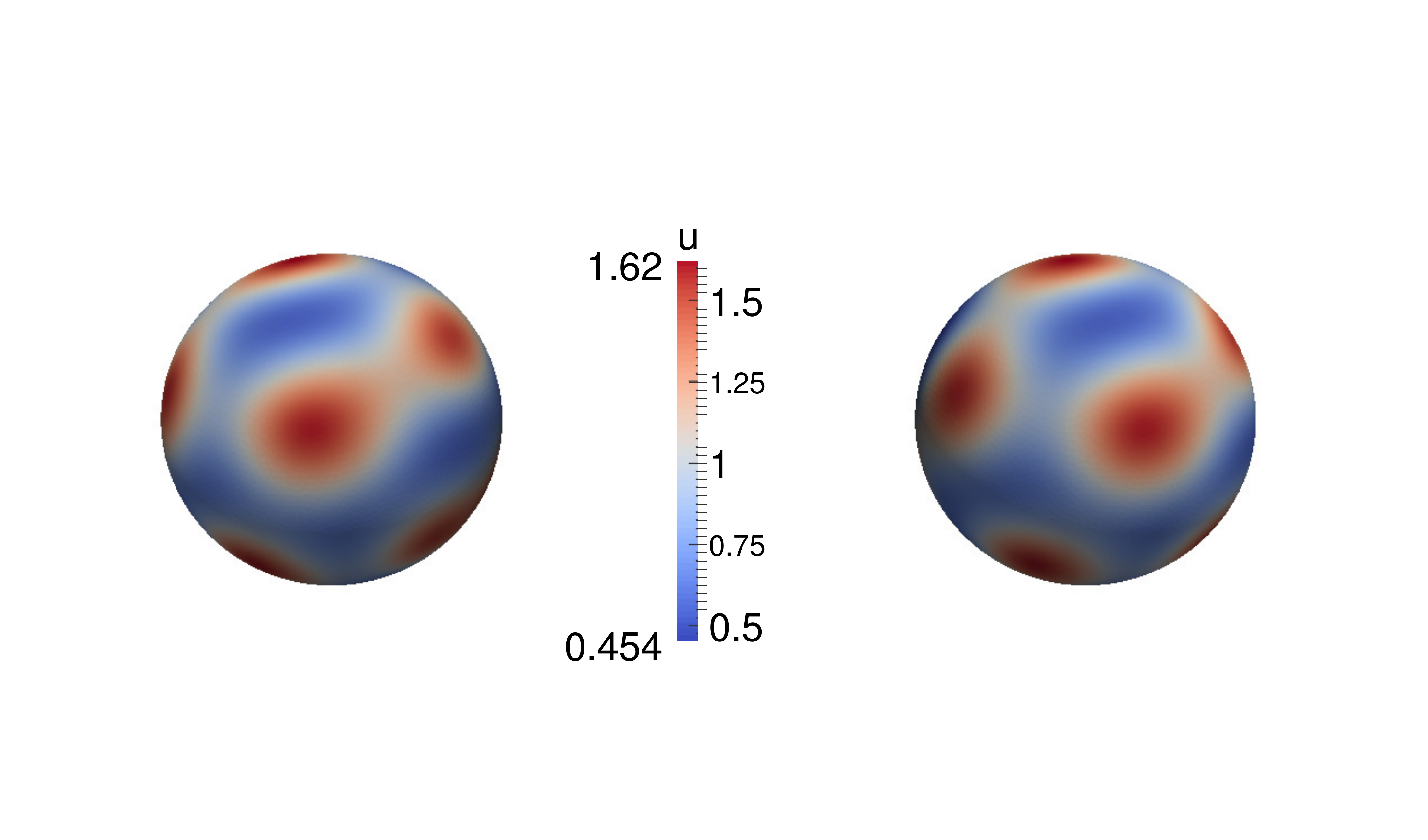}
    } \\
    \subfloat[time \(t=1\)]{
      \includegraphics[width=.95\textwidth]{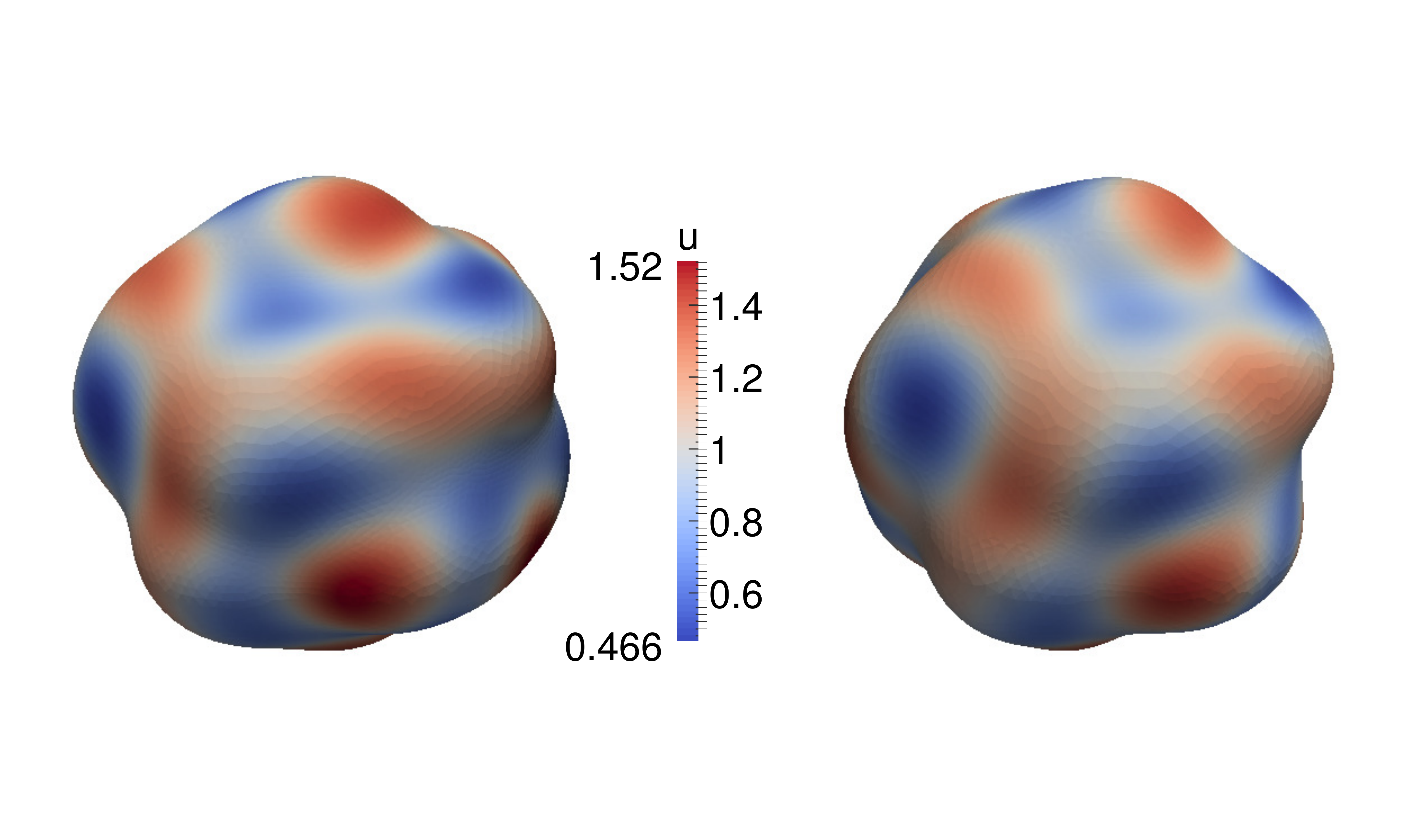}
    } \\
    \subfloat[time \(t=2\)]{
      \includegraphics[width=.95\textwidth]{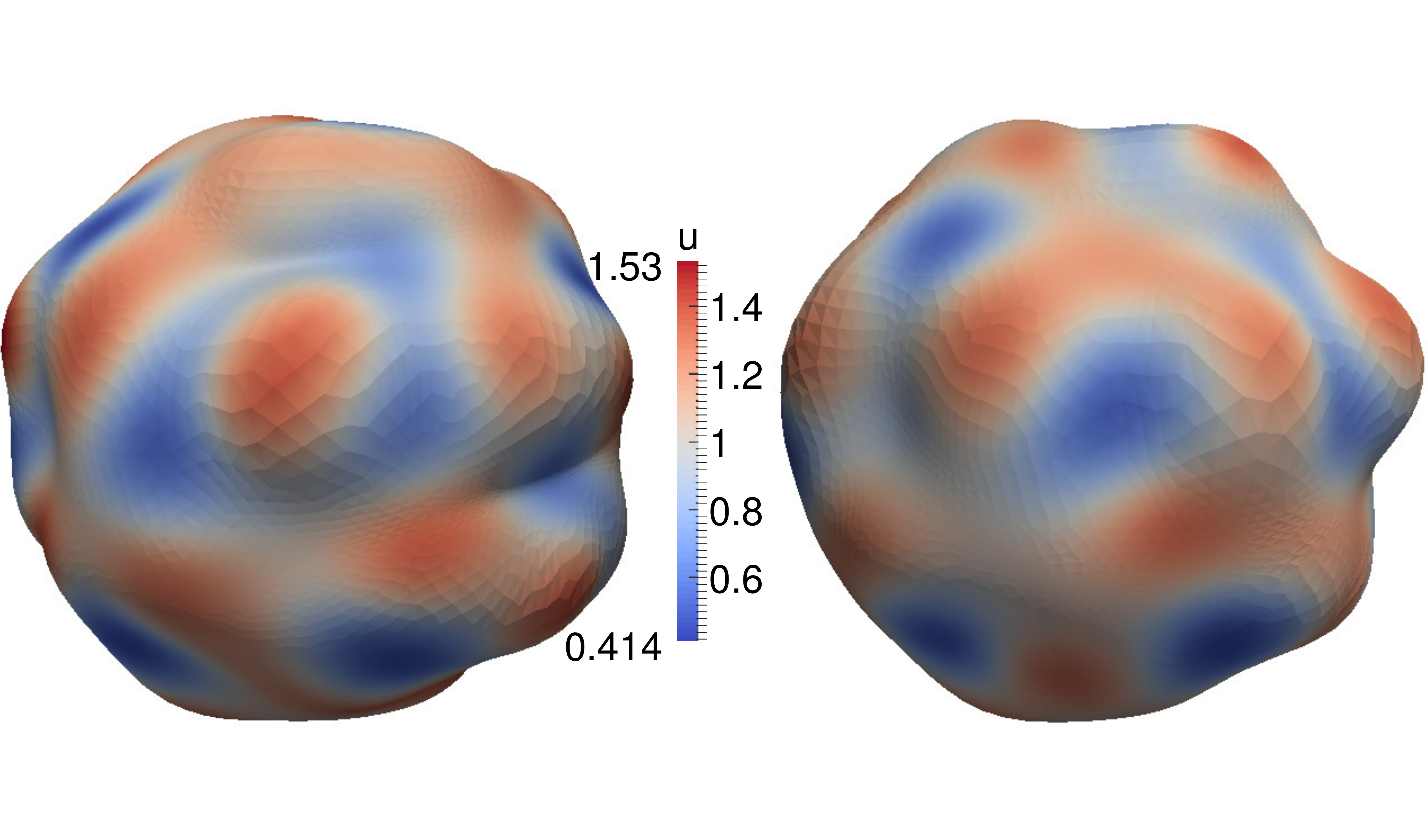}
    }
    \caption{Simulation for Example~\ref{example:2}.  The first column
      corresponds to \((\alpha, \beta) = (0, 0.01)\) and the second column to
      \((\alpha,\beta) = (0.01, 0)\).} 
    \label{fig:example2}
  \end{figure}
\end{example}
 
\section*{Acknowledgement}
\bbk \ebk
The research stay of Buyang Li at the University of T\"ubingen during the preparation of this paper was funded by the Alexander von Humboldt Foundation. The work of Buyang Li  was partially supported by a grant from the Germany/Hong Kong Joint Research Scheme sponsored by the Research Grants Council of Hong Kong and the German Academic Exchange Service of Germany (Ref. No. G-PolyU502/16). The work of Bal\'azs Kov\'acs is funded by Deutsche Forschungsgemeinschaft, SFB 1173.

We are grateful to Raquel Barreira and Antoni Madzvamuse for helpful comments on the tumor growth example of Section~\ref{sec:model-tumor-growth}. 
\bcl  We thank an anonymous referee for constructive comments on a previous version. \ecl

\clearpage

\end{document}